\newcommand{\ConditionallyIndependent}[2]{#1 \perp\kern-5pt \perp #2}
\newcolumntype{L}{>{\arraybackslash}X}
\theoremstyle{plain}
\newtheorem{theorem}{Theorem}[section]
\theoremstyle{remark}
\newtheorem{remark}[theorem]{Remark}
\theoremstyle{plain}
\newtheorem{corollary}[theorem]{Corollary}      
\newtheorem{lemma}[theorem]{Lemma}
\newtheorem{proposition}[theorem]{Proposition}
\theoremstyle{definition}
\newtheorem{definition}[theorem]{Definition}
\newtheorem{assumption}[theorem]{Assumption}
\numberwithin{equation}{section}
\numberwithin{equation}{section}
\renewcommand{\Re}{\operatorname{Re}}
\newcommand{\la}{\lambda}
\def\N{{\mathbb N}}
\def\R{{\mathbb R}}
\newcommand{\norm}[1]{\left\|#1\right\|}
\renewcommand{\S}{\mathbb{S}}
\newcommand{\HH}{\mathcal{H}} 
\newcommand{\eps}{\varepsilon}
\title[]{Self-similar blowup from arbitrary data for supercritical wave maps with additive noise
}
\author{Irfan Glogi\'c}
\address{Fakult\"at f\"ur Mathematik, Universit\"at Bielefeld, D-33501 Bielefeld, Germany}
\email{\href{mailto:irfan.glogic@uni-bielefeld.de}{irfan.glogic@uni-bielefeld.de}}
\author{Martina Hofmanov\'a}
\address{Fakult\"at f\"ur Mathematik, Universit\"at Bielefeld, D-33501 Bielefeld, Germany}
\email{\href{mailto:hofmanova@math.uni-bielefeld.de}{hofmanova@math.uni-bielefeld.de}}
\author{Eliseo Luongo}
\address{Fakult\"at f\"ur Mathematik, Universit\"at Bielefeld, D-33501 Bielefeld, Germany}
\email{\href{mailto:eluongo@math.uni-bielefeld.de}{eluongo@math.uni-bielefeld.de}}
\thanks{This research was funded in whole or in part by the Austrian Science Fund (FWF) 10.55776/PAT5825523.
	M.H. and E.L. are grateful for funding from the European Research Council (ERC) under the European Union’s Horizon 2020 research and innovation programme (grant agreement No. 949981) and for the financial support provided by the Deutsche Forschungsgemeinschaft (DFG, German Research Foundation) – Project-ID 317210226–SFB 1283.}
\begin{document}

	\begin{abstract}
		We consider stochastically perturbed wave maps from $\R^{1+d}$ into $\S^d$, in all energy-supercritical dimensions $d \geq 3$.
		We show that corotational non-degenerate Gaussian additive noise leads to self-similar blowup with positive probability for any corotational initial data. The same result without noise is conjectured, but unknown, for large data. 
		%Moreover, the self-similar blowup can be arranged to happen instantly.
	\end{abstract}
	
	\maketitle
	\maketitle
	\section{Introduction}
	\noindent The wave maps equation represents a generalization of the linear wave equation to maps that take values in Riemannian manifolds. More precisely, a map $\mathcal{U}$ from the flat Minkowski spacetime $(\R^{1+d},\eta)$ into a Riemannian manifold $( M,g)$ is called a \emph{wave map} if it is a critical point (under compactly supported perturbations) of the Lagrangian action functional
	\begin{equation*}
		\mathcal{S}[\mathcal{U}] = \frac{1}{2} \int_{\R^{1+d}} \big(-|\partial_t\mathcal{U}|^2_{g}+ \sum_{i=1}^{d}|\partial_i \mathcal{U}|_g^2 \big)d\mu_{\eta}.
	\end{equation*}
	By putting local coordinates on $M$, and using the Einstein summation convention, we arrive at the following, more compact, form\footnote{Here, Greek indices go from 0 to $d$ and Latin indices go from 1 to $n$, where $n$ is the dimension of the target manifold $M$. According to the Einstein summation convention, whenever an index variable appears twice in a single term, once as a subscript and once as a superscript, it implies a summation over the understood range of values for that index. We also note that the indices are raised and lowered according to the Minkowski metric $\eta = \text{diag}(-1,1,\dots,1)$; in particular $\partial^\alpha = \eta^{\alpha\beta}\partial_\beta $.} 
	\begin{equation*}
		\mathcal{S}[\mathcal{U}] = \frac{1}{2} \int_{\R^{1+d}} g_{ij}(\mathcal{U})\partial_\alpha \mathcal{U}^{i} \partial^\alpha \mathcal{U}^{j}d\mu_{\eta}.
	\end{equation*}
	The associated Euler-Lagrange equations consequently read as
	\begin{align}\label{intro_eq}
		\begin{cases}
			\,\partial^{\mu}\partial_{\mu} \mathcal{U}^i+\Gamma^i_{jk}(\mathcal{U}) \partial^{\mu}\mathcal{U}^j\partial_{\mu}\mathcal{U}^k=0, \quad i=1,\dots,n,\\
			\,\mathcal{U}(0,\cdot)=\mathcal{U}_0,\\
			\,\partial_t \mathcal{U}(0,\cdot)=\hat{\mathcal{U}}_{0},
		\end{cases}
	\end{align}
	where $n$ is the dimension of the manifold $M$, $\Gamma_{jk}^i$ are the Christoffel symbols associated with the metric $g$, and $\partial_t$, according to custom, stands for $\partial_0$.
	
	\begin{comment}
	A map $\mathcal{U}$ from the Minkowski space $(\R^{1+d},\eta)$ into the $d$-sphere $(\mathbb{S}^d,g)$, $(\eta_{\alpha,\beta}) = \operatorname{diag}(-1,1,...,1)$ being Euclidean coordinate system on $\R^{1+d}$ and $g$ being the standard round metric on $\mathbb{S}^d$, is called a \emph{wave map} if it is a critical point of the functional 
	\begin{align*}
	S[U]:=\frac{1}{2}\int_{\R^{1+d}}\operatorname{tr}_{\eta}(U^* g) d vol_{\eta},
	\end{align*}
	$\operatorname{tr}_{\eta}(U^* g)$ being the trace (with respect to $\eta$) of the metric $g$ when pulled back to $\R^{1+d}$ via $U$.
	\end{comment}
	
	In contrast to the linear wave equation, the wave maps system \eqref{intro_eq} is inherently nonlinear due to the curvature of the target manifold $(M,g)$. From a dynamical perspective, a general goal is to understand the interplay between the geometry of the target  and the behavior of solutions to the associated Cauchy problem \eqref{intro_eq}. In particular, one of the central questions is that of finite-time blowup: 
	\begin{center}
		\emph{Can smooth and localized initial data $(\mathcal{U}_0,\hat{\mathcal{U}}_0)$ lead to singularity formation for \eqref{intro_eq} in finite time?}
	\end{center}
	Much research in the past fifty years has addressed this question, as we to some extent review in \autoref{Sec:History} below. In this section we mention only (some of) the works relevant for our main results below.
	We proceed by pointing out that, in the context of finite-time blowup, there is a useful guiding principle based on the \emph{energy functional}
	\begin{align}\label{Def:Energy}
		E[\mathcal{U}](t):=\frac{1}{2}\int_{\R^d} \delta^{\alpha\beta} g_{ij}(\mathcal{U}(t,X))\partial_{\alpha}\mathcal{U}^i(t,X)\partial_{\beta}\mathcal{U}^{j}(t,X) dX,
	\end{align}
	which is formally conserved by solutions of the system, and the scaling transform $\mathcal{U}(t,X) \mapsto \mathcal{U}^{\lambda}(t,X):=\mathcal{U}(t/\lambda,X/\lambda)$, which preserves the wave maps equation \eqref{intro_eq}. Namely, the functional $E$ transforms under the scaling $\mathcal{U} \mapsto \mathcal{U}^{\lambda}$ according to
	\begin{align*}
		E[\mathcal{U}^{\lambda}](t)=\lambda^{d-2} E[\mathcal{U}](t/\lambda),
	\end{align*}
	which shows that shrinking the solution locally (i.e., letting $\la \rightarrow 0^+$) leads to local energy dissipation precisely when $d \geq 3$. Singularity formation is therefore said to be energetically favourable in the so-called \emph{energy-supercritical} case, $d \geq 3$, and one therefore heuristically expects finite-time blowup, at least for some large initial data.
	We remark in passing that for the \emph{energy-subcritical} case, $d=1$, where local energy dissipation precludes concentration of solutions, compactness of the target ensures coercivity of the energy $E$, which in turn provides sufficient control over the wave maps flow to guarantee global regularity for any smooth and localized initial data.
	
	In this paper, we consider the energy-supercritical case of a canonical model of the wave maps equation, where the target manifold is the $d$-dimensional sphere $(\S^d, h)$ endowed with the standard round metric $h$ induced by the embedding into $\R^{d+1}$. For our analysis, it is convenient to work with the so-called \emph{normal coordinates} on $\S^d$. Namely, for a function $\mathcal{U}$ with values in $\S^d$, we consider the coordinate representation $\mathcal{U}=(\mathcal{U}^1,\dots,\mathcal{U}^d)$ where 
	\begin{equation*}
		\mathcal{U}^i := \theta \Omega^i, \quad \text{for} \quad i=1,\dots,d,
	\end{equation*}
	with $\theta$ being the polar angle, and $\Omega^i$ the coordinate functions of the embedding $\S^{d-1} \hookrightarrow \R^d$.
	Concerning existence of blowup for this model, it is known that the corresponding wave maps equation \eqref{intro_eq} admits, in all supercritical dimensions $d \geq 3$, an \emph{explicit} self-similar solution $\mathcal{U}_T$, which is given in the normal coordinates by
	\begin{equation}\label{Def:BizBie_sol}
		\mathcal{U}^i_T(t,X)= \Phi\left( \frac{|X|}{T-t} \right)X^i, \quad \text{with} \quad \Phi(\rho)=\frac{2}{\rho}\arctan\left(\frac{\rho}{\sqrt{d-2}}\right),
	\end{equation}
	and which forms singularity at the origin as $t \rightarrow T^-$.
	Originally identified by Turok and Spergel in \cite{turok1990global} for $d=3$, this solution was subsequently generalized by Bizo\'n and Biernat in \cite{bizon2015generic} to dimensions $d \geq 4$. We note that $\mathcal{U}_T$  belongs to the class of the so-called \emph{corotational} solutions
	\begin{align}\label{Def:Corrotationa_ansatz}
		\mathcal{U}^{i}(t,X)=u(t,\lvert X\rvert)X^i,\quad i=1,\dots, d.
	\end{align}
	In fact, for initial data of corotational form
	\begin{equation}\label{Def:Corot_data}
		\mathcal{U}_0(X)=u_0(|X|) X \quad \text{and} \quad  \hat{\mathcal{U}}_{0}(X)=\hat{u}_0(|X|)X,
	\end{equation}
	\eqref{intro_eq} reduces by means of \eqref{Def:Corrotationa_ansatz} to a Cauchy problem for a single $(d + 2)$-dimensional radial semilinear wave equation for the profile $u = u(t, r)$
	\begin{align}\label{intro_eq_2}
		\begin{cases}
			\,\partial_{t}^2 u-\partial_{r}^2 u-\dfrac{d+1}{r}\partial_{r} u=\dfrac{d-1}{2 r^3}\big(2r u-\sin (2r u)\big),\\
			\,u(0,\cdot)=u_0,\\
			\,\partial_t u(0,\cdot)=\hat{u}_0,
		\end{cases}    
	\end{align}
	with the corresponding self-similar blowup solution
	\begin{align}\label{blow_up_explicit}
		u_{T}(t,r)&:=\frac{1}{T-t}\Phi\left(\frac{r}{T-t}\right), \quad T>0,
	\end{align}
	where $\Phi$ is given in  \eqref{Def:BizBie_sol}.
	
	To understand the role that $u_T$ plays in the dynamics of \eqref{intro_eq_2}, numerical simulations have been performed, in \cite{BizChmTab00} for $d =3$, and in \cite{bizon2015generic} for $d \geq 4$. Surprisingly, these have revealed a striking universality in the blowup behavior: 
	\begin{center}
		\emph{For generic large corotational initial data, the solution blows up at the origin via the self-similar profile $u_T$.}
	\end{center}
	In other words, irrespective of the choice of initial data $(u_0,\hat{u}_0)$ leading to finite-time blowup, if one dynamically self-similarly rescales the solution as it approaches the blowup time, then the profile $\Phi$ emerges as an attractor of the rescaled evolution, i.e.,
	\begin{equation}\label{Eq:Rescaled_convergence}
		(T-t)u(t,(T-t)\cdot) \rightarrow \Phi
	\end{equation} 
	locally uniformly on $[0,+\infty)$ as $t \rightarrow T^-.$ This, in turn, translates for the corotational map $\mathcal{U}$ into
	\begin{equation}\label{Eq:Rescaled_convergence_corotational}
		\mathcal{U}(t,(T-t)\cdot) \rightarrow (\cdot)\Phi(|\cdot|),
	\end{equation}
	which takes place locally uniformly on $\R^d$ as $t \rightarrow T^-.$
	%We will shortly call $\mathbf{u}_{\theta}=(u_{\theta},\hat{u}_{\theta}),\ \mathbf{\Phi}=(\Phi,\hat{\Phi})$ in the following.
	Following these observations, considerable effort has been devoted to their rigorous justification, starting with the pioneering works of Aichelburg, Donninger, and Sch\"orkhuber \cite{DonAic09,Don11,DonSchAic12}. This line of research, which relies on stability analysis localized to lightcones, eventually lead to a rigorous verification of, what could be called, a perturbative local version of the numerical observations above. Namely, for any fixed ball $B_R$ centered at the origin with radius $R>0$, all corotational data that are close enough to that of the blowup solution $u_T$, lead to finite-time blowup with \eqref{Eq:Rescaled_convergence_corotational} taking place uniformly on $B_R$; see \cite{CosDonXia16,CosDonGlo17,ChaDonGlo17,BieDonSch21,DonOst24}. This was then followed by the work of the first author \cite{glogic2025globally}, which takes the global-in-space stability point of view, and thereby removes the assumption of a pre-fixed ball $B_R$, hence establishing the convergence \eqref{Eq:Rescaled_convergence} globally on $[0,+\infty)$, and consequently obtains \eqref{Eq:Rescaled_convergence_corotational} uniformly on compact sets; see \cite[Theorem 1.1]{glogic2025globally} for a precise, quantitative, statement of this result. The non-perturbative counterpart of this result, i.e., the assertion that blowup is governed by $u_T$ for (generic) large data that are \emph{not necessarily close} to those of $u_T$, is, however, open. Motivated by this, in this paper we take a probabilistic point of view, that is, we consider the stochastic variant of \eqref{intro_eq} with additive corotational noise. We then show that for arbitrary corotational initial data, the corresponding solution does, in fact, blow up via \eqref{Def:BizBie_sol} with positive probability.
	
	\subsection{Wave maps with additive noise}
	We consider the following stochastic evolution equation on $\R^d$
	\begin{align}\label{eq_intro_noise}
		\begin{cases}
			\,\partial^{\mu}\partial_{\mu} \mathcal{U}^i+\Gamma^i_{j,k}(\mathcal{U})\partial^{\mu}\mathcal{U}^j\partial_{\mu}\mathcal{U}^k=X^i\dot{{\mathcal{W}}},\ i=1,\dots,d,\\
			\,\mathcal{U}(0,\cdot)=\mathcal{U}_0,\\
			\,\partial_t \mathcal{U}(0,\cdot)=\hat{\mathcal{U}}_{0},
		\end{cases}
	\end{align}
	for an infinite dimensional Brownian motion $\mathcal{W}$ on a filtered probability space  $(\Omega,\mathcal{F},(\mathcal{F}_t)_{t\geq 0},\mathbb{P})$ satisfying suitable assumptions described below; see \autoref{HP_noise_1} and \autoref{hp_noise_2}. To preserve the corotational structure of the solutions to \eqref{eq_intro_noise}, we assume that the noise $\mathcal{W}$  lies in the space of radially symmetric functions, and the initial data are corotational, i.e., \eqref{Def:Corot_data} holds for some radial $u_0, \hat{u}_0$. Then, similarly to above, by setting $ \mathcal{U}(t,X)=u(t,\lvert X\rvert)X$ and $n=d+2$, the system \eqref{eq_intro_noise} reduces to a single stochastic scalar wave equation\footnote{For $x=0$, the nonlinearity is interpreted in the limiting sense.}
	\begin{align}\label{wave_map_system}
		\begin{cases}
			\,\partial_{t}^2 u-\Delta u  =\dfrac{n-3}{2\lvert x\rvert^3}\big(2\lvert x\rvert u-\sin (2\lvert x\rvert u)\big)+\dot{\mathcal{W}},\\
			\,u(0,\cdot)= u_0,\\
			\,\partial_t u(0,\cdot)=\hat{u}_{0},
		\end{cases}
	\end{align}
	with radial data posed on $\R^n.$ Accordingly, we exclusively study the dynamics of solutions to \eqref{wave_map_system}, which we then ultimately translate back to the context of \eqref{eq_intro_noise}. % via relations between Sobolev norms of corotational maps and their radial profiles.
	
	There are at least two additional motivations for studying the system \eqref{eq_intro_noise}. In realistic physical models, idealized equations such as the wave maps system inevitably neglect small-scale external perturbations, imperfections in the environment, and unresolved random fluctuations. A standard modeling approach to incorporate such effects is to introduce stochastic forcing terms. In the context of wave-type systems, additive noise has been employed to model, for instance, thermal fluctuations in plasma physics; see \cite{herr2023three}. Following this modeling philosophy, we consider perturbations by an additive noise term. The specific form of the noise, being proportional to the spatial variable $X$, is consistent with the imposed corotational symmetry and preserves the geometric structure of the problem. This choice ensures that the stochastic perturbation respects the corotational ansatz, and can be interpreted as modeling small, isotropic ambient forces. Moreover, additive noise introduces randomness independently of the solution itself, making it a clean model for external fluctuations that do not alter the underlying geometric structure. Consequently, additive noise acts as a ``non-resonant'' perturbation, allowing testing whether a mechanism of blowup is stable in a probabilistic sense. This is particularly important in supercritical problems, where the deterministic dynamics are often sensitive to small perturbations.

	The past forty years have seen a substantial amount of research on wave equations under stochastic forcing. Without aiming to be exhaustive, we refer to \cite{carmona1988random, carmona1988random2, dalang1998stochastic, peszat2000nonlinear, chow2002stochastic} and references therein for some of the earliest works on the subject. For wave equations taking values in Riemannian manifolds, we mention \cite{brzezniak2007strong, brzezniak2010stochastic}. In situations where the equation becomes singular and renormalization procedures are required, we refer to \cite{albeverio, gubinelli2018renormalization, gubinelli2022global, tolomeo}. Concerning local well-posedness of the stochastic equation \eqref{wave_map_system} (or, equivalently, \eqref{eq_intro_noise}) there are, to our knowledge, no currently available results. Accordingly, we first establish the existence and uniqueness of solutions in suitable function spaces, and derive the associated blowup alternative. We then proceed to analyze the stability of the blowup dynamics.
	
	\subsection{The main results}
	Due to the presence of noise, we have to work with a weak notion of corotational solutions to \eqref{eq_intro_noise}. To this end, we first establish a well-posedness theory for radial solutions to \eqref{wave_map_system}. For this, we adopt the functional framework developed by the first author in \cite{glogic2025globally} for the deterministic variant of \eqref{wave_map_system}, and consider the radial intersection homogeneous Sobolev spaces 
	\begin{equation*}
		\mathcal{H}^{s,k}_{rad}=\dot{H}^s_{rad}(\R^n)\cap \dot{H}^k_{rad}(\R^n)\times \dot{H}^{s-1}_{rad}(\R^n)\cap \dot{H}^{k-1}_{rad}(\R^n);
	\end{equation*} for a precise definition see \autoref{Notation} below. The basic solution concept we rely on is that of a \emph{mild solution}, as defined in, e.g., the classical monograph \cite{da2014stochastic}, to which we also refer for the basic notation in stochastic analysis used throughout the  paper. By denoting $\mathbf{u}(t):=(u(t,\cdot),\partial_t u(t,\cdot))$, we have the following well-posedness result, which we prove in \autoref{prel_physical}.
	\begin{theorem}\label{Thm_well_posed}
		Let $n\geq 5$ and $s,k>0$ such that
		\begin{align}\label{Cond_s_k}
			\frac{n}{2}-1<s<\frac{n}{2}-1+\frac{1}{2n-4},\ k>n,\ k\in \N.
		\end{align} 
		Under \autoref{HP_noise_1}, for each $\mathcal{F}_0$-measurable random variable $\mathbf{u}_0=(u_0,\hat{u}_{0})$ with values in $\HH^{s,k}_{rad}$, there exist a strictly positive stopping time $\tau^*(\mathbf{u}_0):\Omega\rightarrow (0,+\infty]$ and a progressively measurable process $\mathbf{u}:\Omega\times [0,+\infty)\rightarrow \HH_{rad}^{s,k}$ such that:
		\begin{itemize}[leftmargin=2em]
			\setlength{\itemsep}{1.5mm}
			\item[i)] For any positive stopping time $\tau < \tau^*(\mathbf{u}_0)$, the stopped process $\mathbf{u}(\cdot \wedge \tau)$ belongs to $C([0, +\infty); \HH_{rad}^{s,k})$ $\mathbb{P}$-a.s., and  is a mild solution of \eqref{wave_map_system} on $[0, \tau]$.
			\item[ii)] We have that 
			\begin{align*}
				\tau^*(\mathbf{u}_0,\omega)=+\infty \quad \text{or} \quad \limsup_{t\rightarrow \tau^*(\mathbf{u}_0,\omega)}\norm{\mathbf{u}(t)}_{\HH_{rad}^{s,k}}=+\infty\quad \mathbb{P}\text{-a.s.}
			\end{align*}    
		\end{itemize}
		Moreover, $\tau^*(\mathbf{u}_0)$ and $\mathbf u$ are unique in the following sense: if both $\left(\tau^{*}_1(\mathbf{u}_0), \mathbf{u}^1\right)$ and $\left(\tau^{*}_2(\mathbf{u}_0), \mathbf{u}^2\right)$ satisfy the conditions above, then $\tau^{*}_1(\mathbf{u}_0) = \tau^{*}_2(\mathbf{u}_0)$ $\mathbb{P}$-a.s., and for any stopping time $\tau < \tau^{*}_1(\mathbf{u}_0) \wedge \tau^{*}_2(\mathbf{u}_0)$, we have that
		\begin{align*}
			\mathbb{P}\left(\mathbf{u}^1(t \wedge \tau)=\mathbf{u}^2(t \wedge \tau)\ \forall t\geq 0\right)=1.
		\end{align*}
	\end{theorem}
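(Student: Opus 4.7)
The strategy is to combine a reduction-by-stochastic-convolution (à la Da Prato--Zabczyk) with the deterministic well-posedness framework developed by the first author in \cite{glogic2025globally}. Let $S(t)$ denote the $C_0$-group generated by the free wave operator acting on the pair $(u,\partial_t u)$ in $\HH^{s,k}_{rad}$. I would first analyze the wave-type stochastic convolution
\begin{equation*}
Z(t)=\int_0^t S(t-r)\begin{pmatrix}0\\ \dd \mathcal{W}(r)\end{pmatrix},
\end{equation*}
and show, using It\^o's isometry for Hilbert-space-valued stochastic integrals together with the factorization method (or a Kolmogorov continuity argument after $p$-moment estimates), that under \autoref{HP_noise_1} the process $Z$ admits a progressively measurable version with paths in $C([0,+\infty);\HH^{s,k}_{rad})$ almost surely. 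This uses the assumption that the covariance of $\mathcal{W}$ is sufficiently regular and radial, so that $Z$ inherits both the radial symmetry and the joint $\dot H^s\cap \dot H^k$ regularity of the data space.

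Next, I would fix $\omega$ in a set of full probability and consider the pathwise random PDE obtained by writing $\mathbf{u}=\mathbf{v}+Z$, so that $\mathbf{v}$ solves, with zero noise,
\begin{equation*}
\mathbf{v}(t)=S(t)\mathbf{u}_0+\int_0^t S(t-r)\begin{pmatrix}0\\ F(v(r)+z(r))\end{pmatrix}\dd r,
\end{equation*}
where $F(u)=\frac{n-3}{2|x|^3}\bigl(2|x|u-\sin(2|x|u)\bigr)$ and $z$ is the first component of $Z$. Taylor-expanding the sine gives $F(u)=\tfrac{2(n-3)}{3}u^3+\mathcal{O}(|x|^2 u^5)$, so the nonlinearity is cubic with smooth subleading corrections. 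The range of $s$ in \eqref{Cond_s_k} is precisely the scaling-critical exponent $n/2-1$ for the cubic wave equation, plus an arbitrarily small subcritical gain, while $k>n$ ensures the higher Sobolev scale sits well above the algebra threshold. Consequently, the nonlinear estimates from \cite{glogic2025globally} apply verbatim to show that $F$ sends $\dot H^s_{rad}\cap \dot H^k_{rad}$ to $\dot H^{s-1}_{rad}\cap \dot H^{k-1}_{rad}$ in a locally Lipschitz way. A standard Banach contraction in $C([0,T];\HH^{s,k}_{rad})$, where $T$ depends only on $\norm{\mathbf{u}_0(\omega)}_{\HH^{s,k}_{rad}}$ and $\sup_{[0,1]}\norm{Z(\cdot,\omega)}_{\HH^{s,k}_{rad}}$, then yields a unique local mild solution pathwise. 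Measurability of $T$ and of $\mathbf{u}$ is obtained by the usual continuous-dependence argument, approximating the input $Z$ by simple processes.

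Finally, I would build the maximal solution via the stopping times
\begin{equation*}
\tau_R=\inf\!\left\{t\geq 0:\norm{\mathbf{u}(t)}_{\HH^{s,k}_{rad}}\geq R\right\},
\end{equation*}
extending past each $\tau_R$ by restarting from $\mathbf{u}(\tau_R)$ using the uniform local existence time; setting $\tau^*(\mathbf{u}_0)=\sup_R \tau_R$ gives the claimed maximal time and the blow-up alternative, since the local existence time depends only on the size of the initial data and of the stochastic convolution. Pathwise uniqueness, and hence the stated equivalence of any two local solutions up to any stopping time $\tau<\tau_1^*\wedge\tau_2^*$, follows from the local Lipschitz bound on $F$ and Gronwall. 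The main obstacle I anticipate is the first step: establishing $Z\in C([0,+\infty);\HH^{s,k}_{rad})$ requires simultaneously controlling two Sobolev scales---one essentially scaling-critical---under a noise covariance that must be compatible with the radial/corotational ansatz, so the precise formulation of \autoref{HP_noise_1} and the corresponding estimate on the wave stochastic convolution carries the technical weight of the theorem.
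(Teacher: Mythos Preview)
Your proposal is correct and follows essentially the same route as the paper: the Da Prato--Debussche substitution $\mathbf{u}=\mathbf{w}+\mathbf{z}$ followed by a pathwise Banach contraction in $C([0,T^*];\HH^{s,k}_{rad})$, using the local Lipschitz estimate for the nonlinearity (the paper's Lemma~\ref{lemma_regularity_nonlinear}, which is exactly the estimate you attribute to \cite{glogic2025globally}). The one point you misjudge is the ``main obstacle'': under \autoref{HP_noise_1} the noise $\mathcal{W}$ takes values in $\dot H^{s-1,k-1}_{rad}$ with trace-class covariance, so $\mathbf{z}\in C([0,+\infty);\HH^{s,k}_{rad})$ follows directly from standard results in \cite{da2014stochastic} without factorization or Kolmogorov---the technical weight lies instead in the nonlinear estimate, which is where the restriction \eqref{Cond_s_k} actually enters.
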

	\begin{remark}\label{remark_numbers} 
		The condition \eqref{Cond_s_k} on the Sobolev exponents $s,k$ is imposed for several reasons. First, the assumption $s,k>\frac{n}{2}-1$ ensures that the associated heat semigroup in similarity variables exhibits exponential decay. Additionally, if $s$ is close enough to $\frac{n}{2}-1$, and $k$ is a large enough integer, the nonlinear operator in \eqref{wave_map_system} is  Lipschitz continuous in $\mathcal{H}_{rad}^{s,k}$. The two features above then allow for constructing local solutions by means of straightforward fixed point arguments, which, in particular, avoid having to use sophisticated dispersive equations tools like, e.g., Strichartz estimates.
	\end{remark}
	\begin{remark}
		To remove possible ambiguity, we provide below the exact definition of the mild solution referred to in $i)$; see \autoref{mild_solution}.
	\end{remark}
	\noindent 
	\noindent Now, by means of \autoref{Thm_well_posed} we can derive the analogous well-posedness result for the system \eqref{eq_intro_noise}. For this, we consider spaces
	\begin{align*}
	    \HH^{s,k}=\dot{H}^s(\R^d)\cap \dot{H}^k(\R^d)\times \dot{H}^{s-1}(\R^d)\cap \dot{H}^{k-1}(\R^d),
	\end{align*}
	and use the notion of corotational mild solutions to \eqref{eq_intro_noise}, for which we refer to \autoref{mild_solution} below.
	Now, in view of the equivalence between Sobolev norms of corotational maps and their radial profiles (see \cite[Proposition A.5, Remark A.6]{glogic2022stable}), we readily obtain the following local well-posedness result for \eqref{eq_intro_noise}. We use the notation $\boldsymbol{\mathcal{U}}(t)=(\mathcal{U}(t,\cdot),\partial_t \mathcal{U}(t,\cdot))$.
 	\begin{corollary}\label{corollary_well_posed}
		Let $d\geq 3$ and $s,k>0$ such that 
		\begin{align*}
			\frac{d}{2}<s<\frac{d}{2}+\frac{1}{2d},\ k>d+2,\ k\in \N.
		\end{align*}
		Under \autoref{HP_noise_1},
		for each $\mathcal{F}_0$-measurable corotational random variable $\boldsymbol{\mathcal{U}}_0=(\mathcal{U}_0,\mathcal{\hat{U}}_{0})$ with values in $\HH^{s,k}$, there exist a strictly positive stopping time $\tau^*(\boldsymbol{\mathcal{U}}_0):\Omega\rightarrow (0,+\infty]$ and a progressively measurable process $\boldsymbol{\mathcal{U}}:\Omega\times [0,+\infty)\rightarrow \HH^{s,k}$ such that:
		\begin{itemize}[leftmargin=2em]
			\setlength{\itemsep}{1mm}
			\item[i)]  For any positive stopping time $\tau < \tau^*(\boldsymbol{\mathcal{U}}_0)$, the stopped process $\boldsymbol{\mathcal{U}}(\cdot \wedge \tau)$ belongs to $C([0, +\infty); \HH^{s,k})$ $\mathbb{P}$-a.s., and is a corotational mild solution on $[0, \tau]$ of \eqref{eq_intro_noise}.
			\item[ii)] We have that
			\begin{align*}
				\tau^*(\boldsymbol{\mathcal{U}}_0,\omega)=+\infty \quad \text{or} \quad \limsup_{t\rightarrow \tau^*(\boldsymbol{\mathcal{U}}_0,\omega)}\norm{\boldsymbol{\mathcal{U}}(t)}_{\HH^{s,k}}=+\infty\quad \mathbb{P}\text{-a.s.}
			\end{align*}   
		\end{itemize}
		Moreover, uniqueness holds in the following sense: if both $\left(\tau^{*}_1(\boldsymbol{\mathcal{U}}_0), \boldsymbol{\mathcal{U}}^1\right)$ and $\left(\tau^{*}_2(\boldsymbol{\mathcal{U}}_0), \boldsymbol{\mathcal{U}}^2\right)$ satisfy the conditions above, then $\tau^{*}_1(\boldsymbol{\mathcal{U}}_0) = \tau^{*}_2(\boldsymbol{\mathcal{U}}_0)$ $\mathbb{P}$-a.s., and for any stopping time $\tau < \tau^{*}_1(\boldsymbol{\mathcal{U}}_0) \wedge \tau^{*}_2(\boldsymbol{\mathcal{U}}_0)$, it holds that
		\begin{align*}
			\mathbb{P}\left(\boldsymbol{\mathcal{U}}^1(t \wedge \tau)=\boldsymbol{\mathcal{U}}^2(t \wedge \tau)\ \forall t\geq 0\right)=1.
		\end{align*}
	\end{corollary}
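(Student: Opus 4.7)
The plan is to leverage the one-to-one correspondence between corotational maps $\mathcal{U}(t,X)=u(t,|X|)X$ on $\R^d$ and their radial scalar profiles $u(t,r)$ viewed as radial functions on $\R^n$ with $n=d+2$. First I would observe that, under this identification, the Sobolev norm equivalence recorded in \cite[Proposition A.5, Remark A.6]{glogic2022stable} provides a bicontinuous isomorphism between the corotational closed subspace of $\HH^{s,k}$ on $\R^d$ and $\HH^{s,k}_{rad}$ on $\R^n$. I would then verify that the exponent conditions $\frac{d}{2}<s<\frac{d}{2}+\frac{1}{2d}$ and $k>d+2$ in the corollary translate, via $n=d+2$, into the conditions $\frac{n}{2}-1<s<\frac{n}{2}-1+\frac{1}{2n-4}$ and $k>n$ required by \autoref{Thm_well_posed}.

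Next, given an $\mathcal{F}_0$-measurable corotational random variable $\boldsymbol{\mathcal{U}}_0=(u_0(|\cdot|)(\cdot),\hat{u}_0(|\cdot|)(\cdot))$ with values in $\HH^{s,k}$, I would extract the radial profile $\mathbf{u}_0=(u_0,\hat{u}_0)$, which is $\mathcal{F}_0$-measurable with values in $\HH^{s,k}_{rad}$ by the isomorphism above, and apply \autoref{Thm_well_posed} to obtain a maximal radial mild solution $(\tau^*(\mathbf{u}_0),\mathbf{u})$ of \eqref{wave_map_system}. I would then set $\tau^*(\boldsymbol{\mathcal{U}}_0):=\tau^*(\mathbf{u}_0)$ and, for $i=1,\ldots,d$,
\begin{equation*}
\mathcal{U}^i(t,X):=u(t,|X|)X^i,\qquad \partial_t\mathcal{U}^i(t,X):=\partial_t u(t,|X|)X^i,
\end{equation*}
and check, using the corotational form $X^i\dot{\mathcal{W}}$ of the noise with radial $\mathcal{W}$, that this process is a corotational mild solution of \eqref{eq_intro_noise} in the sense of \autoref{mild_solution}. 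Progressive measurability, strict positivity of $\tau^*(\boldsymbol{\mathcal{U}}_0)$, and pathwise continuity in $\HH^{s,k}$ on $[0,\tau]$ for each stopping time $\tau<\tau^*(\boldsymbol{\mathcal{U}}_0)$ all transfer from the corresponding properties of $\mathbf{u}$ via the bicontinuous isomorphism.

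The blowup alternative follows directly from the norm equivalence $\|\boldsymbol{\mathcal{U}}(t)\|_{\HH^{s,k}}\simeq \|\mathbf{u}(t)\|_{\HH^{s,k}_{rad}}$, so that the dichotomy in part \emph{ii)} of \autoref{Thm_well_posed} transfers verbatim to the one stated here. Uniqueness is obtained analogously: two corotational mild solutions $\boldsymbol{\mathcal{U}}^1,\boldsymbol{\mathcal{U}}^2$ of \eqref{eq_intro_noise} give rise, by radial restriction, to two radial mild solutions $\mathbf{u}^1,\mathbf{u}^2$ of \eqref{wave_map_system} with the same $\mathcal{F}_0$-measurable initial data, to which the uniqueness statement in \autoref{Thm_well_posed} applies; the equality on the $\R^d$ side is then recovered by multiplying by $X^i$.

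The main obstacle I expect is the careful verification that the corotational mild formulation of \eqref{eq_intro_noise} on $\R^d$ is genuinely equivalent, under the ansatz $\mathcal{U}^i(t,X)=u(t,|X|)X^i$, to the radial mild formulation of \eqref{wave_map_system} on $\R^n$. This amounts to showing that the Duhamel representation based on the $d$-dimensional wave propagator, applied to corotational data and to a corotational forcing (deterministic nonlinearity plus stochastic convolution against $X^i\dot{\mathcal{W}}$), reduces under the ansatz to the corresponding Duhamel representation built from the $(d+2)$-dimensional radial wave propagator acting on the scalar profile. This is a deterministic computation at the level of the linear wave equation, combined with consistency of the stochastic integral under the radial-profile identification, and it is the only nontrivial point beyond direct transfer of the statements in \autoref{Thm_well_posed}.
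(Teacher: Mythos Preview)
Your proposal is correct and matches the paper's approach: the paper states that the corollary follows ``readily'' from \autoref{Thm_well_posed} via the Sobolev norm equivalence \cite[Proposition~A.5, Remark~A.6]{glogic2022stable}, exactly as you outline. One simplification: the ``main obstacle'' you anticipate---verifying that the $d$-dimensional corotational Duhamel formulation reduces to the $(d+2)$-dimensional radial one---is in fact vacuous here, because \autoref{mild_solution} \emph{defines} a corotational mild solution of \eqref{eq_intro_noise} to be a process induced by a radial mild solution of \eqref{wave_map_system} via $\mathcal{U}^i(t,X)=u(t,|X|)X^i$; no independent Duhamel comparison is required.
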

	If the stopping time $\tau^*(\boldsymbol{\mathcal{U}}_0,\omega)$ above is finite, we say that the solution $\boldsymbol{\mathcal{U}}$ (or $\mathcal{U}$) \emph{blows up in finite time}, and we call $\tau^*(\boldsymbol{\mathcal{U}}_0,\omega)$ \emph{the blowup time}.
	Having a well-defined dynamics for \eqref{eq_intro_noise}, as well as the corresponding blowup alternative, we are ready to state the central result of the paper, which establishes the occurrence of self-similar blowup via $\mathcal{U}_T$ with positive probability for arbitrary data.
	\begin{theorem}\label{main_thm}
		Let $d\geq 3$ and $s,k>0$ such that 
		\begin{align*}
			\frac{d}{2}<s<\frac{d}{2}+\frac{1}{2d},\ k>d+2,\ k\in \N.
		\end{align*}
		Given \autoref{hp_noise_2}, for any corotational initial data
		\begin{equation}\label{Def:Init_data_theorem}
			\boldsymbol{\mathcal{U}}_0=(\mathcal{U}_0,\hat{\mathcal{U}}_0)\in \dot{H}^{s-1}(\mathbb{R}^d)\cap \dot{H}^{k+2}(\mathbb{R}^d)\times \dot{H}^{s-1}(\mathbb{R}^d)\cap \dot{H}^{k}(\mathbb{R}^d) 
		\end{equation}  
		and any positive number $\mathcal{T}$, the unique solution to \eqref{eq_intro_noise} in $\mathcal{H}^{s,k}$ given by \autoref{corollary_well_posed} blows up with positive probability via $\mathcal{U}_T$ for some $T<\mathcal{T}$. More precisely, for each choice of $\mathcal{U}_0$ given by \eqref{Def:Init_data_theorem} and $\mathcal{T}>0$, there exist a set $\mathcal{N} \subseteq \Omega$ of positive probability 
		%and $\eps>0$ small enough
		such that for each $\omega\in{\mathcal{N}}$ the following hold:
		\begin{itemize}[leftmargin=2em]
			\setlength{\itemsep}{2mm}
			\item[i)] $\dfrac{\mathcal{T}}{2}<\tau^*(\boldsymbol{\mathcal{U}}_0,\omega) < \mathcal{T}$;
			%     \item[ii)] $\norm{\boldsymbol{\mathcal{U}}(\frac{\mathcal{T}}{2},\cdot)-\boldsymbol{\mathcal{U}}_{\frac{\mathcal{T}}{3}}(0,\cdot)}_{\HH^{s,k}}<\eps$;
			\item[ii)] For all $t \in \big(\frac{\mathcal{T}}{2}, \tau^*(\boldsymbol{\mathcal{U}}_0,\omega)\big)$ we have that
			\begin{align*}
				\mathcal{U}(t,X)1_{\mathcal{N}}=\frac{X}{\tau^*(\boldsymbol{\mathcal{U}}_0,\omega)-t}&\Phi\left(\frac{\lvert X\rvert}{\tau^*(\boldsymbol{\mathcal{U}}_0,\omega)-t}\right)1_{\mathcal{N}}\\
				&+\Upsilon\left(t,\frac{\lvert X\rvert}{\tau^*(\boldsymbol{\mathcal{U}}_0,\omega)-t}\right)1_{\mathcal{N}}+ Xz_{\tfrac{\mathcal{T}}{2}}\big(t,\lvert X\rvert\big) 1_{\mathcal{N}},
			\end{align*}
			where $z_{\tfrac{\mathcal{T}}{2}}(t,|X|)$ is the solution of the linear wave equation with additive noise $\mathcal{W}$ (see \eqref{Eq:Stoch_lin_wave}) with zero initial data posed at $t=\tfrac{\mathcal{T}}{2}$, and on $\mathcal{N}$ the profile $\Upsilon$ satisfies
			\begin{align*}
				\norm{\Upsilon(t,\cdot)}_{\HH^{s,k}}\rightarrow 0\quad \text{as} \quad t \rightarrow \tau^*(\boldsymbol{\mathcal{U}}_0,\omega)^-. 
			\end{align*}
			In particular, on $\mathcal{N}$  we have that 
			\begin{align*}
				\mathcal{U}\big(t,(\tau^*(\boldsymbol{\mathcal{U}}_0,\omega)-t )\cdot\big)\rightarrow  (\cdot)\Phi(|\cdot|)  
			\end{align*}
			locally uniformly on $\mathbb{R}^d$ as $t\rightarrow \tau^*(\boldsymbol{\mathcal{U}}_0,\omega)^-.$
		\end{itemize}
	\end{theorem}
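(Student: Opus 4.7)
My plan is to combine the deterministic stability of self-similar blowup from \cite{glogic2025globally} with a Stroock--Varadhan type support argument enabled by the non-degeneracy of the noise in \autoref{hp_noise_2}. Under that assumption the Cameron--Martin space of $\mathcal{W}$ is rich enough that $\mathcal{W}$-paths can approximate a dense class of smooth radial forcings. The noise then plays two roles: on $[0,\mathcal{T}/2]$ it acts as a control which, with positive probability, steers the solution from arbitrary initial data into a small $\HH^{s,k}_{rad}$-neighborhood of the self-similar data $\mathbf{u}_{T^*}(\mathcal{T}/2)$ for a fixed target blowup time $T^* \in (\mathcal{T}/2,\mathcal{T})$; on $[\mathcal{T}/2,\tau^*)$ it is, conditionally with positive probability, small in $\HH^{s,k+2}_{rad}$ and so acts as a harmless perturbation of the deterministic self-similar flow near $u_{T^*}$.

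\textbf{Stage 1: steering into the stability regime.} Fix $T^*=3\mathcal{T}/4$. I want to exhibit a smooth deterministic forcing $\zeta$ on $[0,\mathcal{T}/2]$ (valued in the Cameron--Martin space of $\mathcal{W}$) such that the wave maps equation with driving $\dot{\zeta}$ starting from $\mathbf{u}_0$ exists on $[0,\mathcal{T}/2]$ and lands within any prescribed $\eta>0$ of $\mathbf{u}_{T^*}(\mathcal{T}/2)$ in $\HH^{s,k}_{rad}$. This is an approximate controllability problem, which I would resolve by splitting $[0,\mathcal{T}/2]$ into short sub-intervals and on each one choosing the forcing to first nearly cancel the nonlinearity (so the solution stays bounded and away from blowup) and then apply a linear wave-equation control pushing the state toward the next interpolation point on a smooth path from $\mathbf{u}_0$ to $\mathbf{u}_{T^*}(\mathcal{T}/2)$; local solvability of the resulting fixed-point problem is guaranteed by the Lipschitz control of the nonlinearity in $\HH^{s,k}_{rad}$ (cf.~\autoref{remark_numbers}). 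The Stroock--Varadhan support theorem together with continuous dependence of the stochastic flow on the noise path then yields $\mathbb{P}(E_1)>0$, where
\begin{equation*}
E_1 := \bigl\{\tau^*(\boldsymbol{\mathcal{U}}_0) > \mathcal{T}/2,\ \|\mathbf{u}(\mathcal{T}/2) - \mathbf{u}_{T^*}(\mathcal{T}/2)\|_{\HH^{s,k}_{rad}} < \eta\bigr\}.
\end{equation*}

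\textbf{Stage 2: stochastic stability of self-similar blowup.} On $[\mathcal{T}/2,\tau^*)$ I decompose $\mathbf{u}(t) = \mathbf{v}(t) + \mathbf{z}(t)$ with $\mathbf{z}(t)=(z_{\mathcal{T}/2}(t),\partial_t z_{\mathcal{T}/2}(t))$ the stochastic convolution starting from zero data at $\mathcal{T}/2$, so that $\mathbf{v}$ satisfies the random PDE
\begin{equation*}
\partial_t^2 v - \Delta v = F(|x|, v + z_{\mathcal{T}/2}), \qquad \mathbf{v}(\mathcal{T}/2) = \mathbf{u}(\mathcal{T}/2),
\end{equation*}
with $F$ the nonlinearity appearing in \eqref{wave_map_system}. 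By independence of $\mathcal{W}|_{[0,\mathcal{T}/2]}$ and $\mathcal{W}|_{[\mathcal{T}/2,\mathcal{T}]}$, conditioning on $E_1$ does not alter the law of $\mathbf{z}$, and a Gaussian small-ball estimate then gives $\mathbb{P}(E_2 \mid E_1)>0$ for $E_2 := \{\|\mathbf{z}\|_{C([\mathcal{T}/2,\mathcal{T}];\HH^{s,k+2}_{rad})}<\delta\}$. Set $\mathcal{N}:=E_1\cap E_2$. On $\mathcal{N}$ the $\mathbf{v}$-equation is an $O(\delta)$ smooth perturbation of the deterministic wave maps equation launched $\eta$-close to $\mathbf{u}_{T^*}(\mathcal{T}/2)$. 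Passing to the similarity variables $\tau=-\log(T-t)$, $\rho=|x|/(T-t)$ and running the co-dimension-one Lyapunov--Schmidt / inverse-function-theorem argument of \cite{glogic2025globally} with the blowup time $T$ as scalar parameter and the extra driving term $F(v+z_{\mathcal{T}/2})-F(v)$ absorbed by the Lipschitz estimates underlying \autoref{Thm_well_posed}, one selects, for $\eta,\delta$ sufficiently small, a true blowup time $T\in(\mathcal{T}/2,\mathcal{T})$ close to $T^*$ for which $\mathbf{v}$ blows up via $\mathbf{u}_T$ with remainder $\|\Upsilon(t,\cdot)\|_{\HH^{s,k}_{rad}}\to 0$ as $t\to T^-$. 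Translating back via \eqref{Def:Corrotationa_ansatz} yields the decomposition claimed in (ii).

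\textbf{Main obstacle.} The crux is the nonlinear controllability claim in Stage 1: realizing, with positive probability, a prescribed approach to $\mathbf{u}_{T^*}(\mathcal{T}/2)$ from arbitrary initial data through only smooth forcings in the Cameron--Martin space of $\mathcal{W}$, despite possible deterministic blowup on $[0,\mathcal{T}/2]$ when $\mathcal{T}$ is large. Stage 2, by contrast, is a natural stochastic extension of the deterministic framework of \cite{glogic2025globally}, made tractable by the pathwise smallness of the stochastic convolution on $\mathcal{N}$ together with the Lipschitz/fixed-point structure already deployed for the local well-posedness in \autoref{Thm_well_posed}.
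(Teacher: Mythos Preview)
Your two-stage strategy --- irreducibility/controllability on $[0,\mathcal{T}/2]$ followed by stochastic stability of self-similar blowup via the Da Prato--Debussche decomposition and a Lyapunov--Perron argument in similarity variables with the blowup time as free parameter --- is exactly the paper's approach (their \autoref{Thm_irreducibility} combined with \autoref{stability_lemma}). The only notable implementation difference is in Stage~1: rather than your sub-interval procedure of locally canceling the nonlinearity and then applying linear control, the paper directly prescribes an explicit smooth trajectory (a heat-regularized cubic polynomial in $t$) connecting $\mathbf{u}_0$ to the target $\mathbf{u}_1$ and then reverse-engineers the forcing $f$ that makes the equation hold along it (\autoref{prop_aux_control}), which automatically avoids any intermediate blowup and removes what you flagged as the main obstacle.
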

	\begin{remark}
		As will be evident from the proof, the value $\frac{\mathcal{T}}{2}$ in the statement above may be replaced with any  $\mathcal{T}_1 < \mathcal{T}$. We have opted to avoid this additional level of generality in order to simplify the notation.
	\end{remark}
	\subsection{On regularization by noise phenomena} Starting with the seminal works \cite{zvonkin1974transformation, veretennikov1981strong}, it has become well established that the addition of noise to (partial) differential equations can help prevent pathological behaviors such as non-uniqueness or blowup of solutions. We refer in particular to \cite{flandoli2010well,DP_regular_1, DP_regular_2, Priola, bertacco2023weak, beck2019stochastic, flandoli2021high, coghi2023existence, herr2023three,agresti2024global,crippa2025zero} for various instances of the so-called phenomenon of \emph{regularization by noise}, including examples arising in the context of fluid dynamics. The heuristic underlying these results is that pathological behaviors in (P)DEs typically occur only along exceptional trajectories and are not generic. Consequently, a small random perturbation can move the system away from them, thereby restoring uniqueness or preventing blowup. For a detailed exposition of this idea, we refer to the lecture notes \cite{flandoli2011random}. 
	
	In light of the results mentioned above and the preceding discussion, \autoref{main_thm} suggests that the blowup profile $\mathcal{U}_T$ (or $(\cdot)\Phi(|\cdot|)$) is not only in some sense generic but also that the corresponding blowup mechanism is so robust that the addition of noise, at least in the additive case, does not prevent the system's pathological behavior. On the contrary, it may even amplify it, as we are able to establish blowup for arbitrarily small initial data, for which the deterministic dynamics would otherwise be globally well-posed. From this perspective, our result is in line with those obtained for the Schrödinger equation in \cite{de2002effect, debussche2005blow}, where it was shown that the presence of smooth noise, whether additive or linear, provokes blowup with positive probability for \emph{generic} initial data. Unlike our work, however, \cite{de2002effect, debussche2005blow} do not provide asymptotic information on the behavior of solutions near the blowup time. We also point to works \cite{mueller1993blowup, mueller2000critical}, which investigate the effect of multiplicative noise on the heat equation, demonstrating that such noise can induce blowup of solutions with positive probability. Further examples of this phenomenon can be found in \cite{foondun2015non, foondun2019some, foondun2024instantaneous, Hocq19}.
	
	We conclude by noting that, as with all previously cited results on blowup induced by noise, we provide no information on the size of the probability of blowup. Furthermore, we cannot assert that every instance of blowup occurs in a self-similar fashion. Although we expect this to be the case, and moreover that $\mathbb{P}\left(\tau^*(\boldsymbol{\mathcal{U}}_0)<T\right)\rightarrow 1$ as $T\rightarrow +\infty$, we have no rigorous proofs.
	
	\subsection{Brief history of blowup for (deterministic) wave maps} \label{Sec:History}  
	Wave maps provide a geometric generalization of geodesics to higher dimensions. More precisely, the wave maps equation for maps from the flat Minkowski spacetime $(\R^{1+d},\eta)$ into a Riemannian manifold $(M,g)$ defines a motion of the Euclidean space $\R^d$ embedded into $M$, where in the extreme case, $d=0$, the system reduces to the geodesic equation. Unlike geodesics, however, wave maps in general exhibit rich dynamical behavior, including the formation of singularities in finite time. 
	
	For the energy-subcritical case, $d=1$, blowup is actually ruled out for compact target manifolds, regardless of their geometry. This simply follows  from the strong well-posedness of the one-dimensional wave maps at the energy regularity, together with the positive definiteness of the energy functional \eqref{Def:Energy}. For non-compact targets, however, there seems to be no known results on the existence of blowup so far.
	
	In the energy-critical case, $d=2$, the fine geometric properties of the target manifold become relevant, even in the compact case. For spherical targets, a remarkable series of works has addressed the existence and stability of blowup; see, e.g., \cite{KriSchTat08,RodSte10,RapRod12,KriMia20,KriMiaSch20}. In all these cases, the blowup mechanism is non-self-similar and corresponds to the so-called bubbling off of a harmonic map. In contrast, for targets of constant negative curvature, e.g., the hyperbolic plane, blowup is known to be impossible; see, e.g., \cite{Tao09,SteTat10,SteTat10a,KriSch12}.
	
	In the energy-supercritical case, $d \geq 3$, the construction of blowup is generally more accessible, as self-similar solutions often exist. The earliest results in this direction are works of Shatah and Tahvildar-Zadeh on rotationally symmetric targets with positive curvature \cite{Sha88,ShaTah94}. For targets of constant negative curvature, on the other hand, the existence of blowup remains a major open problem. However, when the assumption of constant curvature is dropped and the target is merely required to have negative curvature, self-similar blowup solutions have been constructed in sufficiently high dimensions; see, e.g., \cite{CazShaTah98,DonGlo19}.
	
	\subsection{Outline of the proof of \autoref{main_thm}}\label{sec_strategy}
	The proof is thematically divided between \autoref{sec_self_similar_analysis} and \autoref{Section_Irreducibility}. 
	The main result of \autoref{sec_self_similar_analysis} is \autoref{stability_lemma}, which establishes the nonlinear stability of the blowup mechanism of \eqref{Def:BizBie_sol} for the stochastic  system \eqref{eq_intro_noise}. Loosely speaking, we show that if the initial data $\boldsymbol{\mathcal{U}}_0$ are sufficiently close to $\boldsymbol{\mathcal{U}}_{T}(0)$ for some $T>0$, where $\boldsymbol{\mathcal{U}}_{T}=(\mathcal{U}_{T},\hat{\mathcal{U}}_T)$ denotes the explicit self-similar solution of \eqref{intro_eq}
	\begin{equation*}
		\mathcal{U}^i_T(t,X)= \Phi\left( \frac{|X|}{T-t} \right)\frac{X^i}{T-t}, \quad \hat{\mathcal{U}}^i_T(t,X):=\partial_t \mathcal{U}_T^i(t,X)= \hat{\Phi}\left( \frac{|X|}{T-t} \right)\frac{X^i}{(T-t)^2},
	\end{equation*}
	with profiles $\Phi$ and $\hat{\Phi}$ given by
	\begin{equation}\label{vector_self_similar_blow_up}
		\Phi(\rho)=\frac{2}{\rho}\arctan\left(\frac{\rho}{\sqrt{d-2}}\right)\quad  \text{and} \quad\hat{{\Phi}}(\rho)=\Phi(\rho)+\rho \Phi'(\rho)
	\end{equation}
	(equivalently, $\mathbf{{u}}_0:=(u_0,\hat{u}_0)$ is sufficiently close to $\mathbf{{u}}_{T}(0)$, where ${\mathbf{{u}}}_{T}:=(u_{T},\hat{u}_T)$ denotes the explicit self-similar solution of \eqref{intro_eq_2}
	\begin{align}\label{SS_sol}
    u_{T}(t,r)=\frac{1}{{T}-t}\Phi\left(\frac{r}{{T}-t}\right),\quad 
		\hat{u}_{T}(t,r)=\partial_t u_{T}(t,r)=\frac{1}{({T}-t)^2}\hat{\Phi}\left(\frac{r}{{T}-t}\right)\bigg),
	\end{align}
	then the corresponding solution $\mathcal{U}$ of \eqref{eq_intro_noise} given by \autoref{corollary_well_posed} blows up with a positive probability at time $\tilde{T}<2T$ by converging to $\mathcal{U}_{\tilde{T}}$. The proof of \autoref{stability_lemma} is based on the now celebrated Da Prato-Debussche trick \cite{da2002two}, combined with a deterministic stability analysis in the spirit of \cite{glogic2025globally}. 
	First, by the representation 
	\begin{equation}\label{Def:U_u}
		\mathcal{U}(t,X)=u(t,\lvert X\rvert)X
	\end{equation}
	and \cite[Lemma A.5]{glogic2022stable} we reduce the analysis to that of the dynamics of the radial profile $u$ evolving according to \eqref{wave_map_system}. Then, we define $z$ as the stochastic convolution starting from zero data for the linear wave equation \eqref{Eq:Stoch_lin_wave}. Consequently, as $z$ is a regular object, in order to study the blowup dynamics of \eqref{wave_map_system}, it suffices to analyze the behavior of
	\begin{equation}\label{Def:z}
		w := u - z
	\end{equation}
	 solving
	\begin{align}\label{eq_reminder_physical}
		\begin{cases}
			\,\partial_t^2 w-\Delta w=\dfrac{n-3}{2\lvert x\rvert^3}\big(2\lvert x\rvert (w+z)-\sin (2\lvert x\rvert (w+z))\big),\\
			\, w(0)= u_0,\\
			\, \partial_t w(0)=\hat{u}_{0},   
		\end{cases}
	\end{align}
	for  $x \in \R^n$. From this point onward, we adopt the standard approach of studying the flow near self-similar solutions by passing to \emph{similarity variables}. More precisely, for a parameter $\tilde{T}$, which is assumed to be close to $T$ and to be fixed later, we consider the change of independent variables
	\begin{equation*}
		\tau=\tau(t):=\log\bigg(\frac{\tilde{T}}{\tilde{T}-t}\bigg), \quad \xi=\xi(t,x):=\frac{x}{\tilde{T}-t}.
	\end{equation*}
	Furthermore, we define the new dependent variables (in their vector form)
	\begin{gather*}
		\mathbf{Z}(\tau,\xi):=(Z(\tau,\xi),\hat{Z}(\tau,\xi)), \;\; \text{for} \;\; Z(\tau,\xi)=(\tilde{T}-t)z(t,x) \;\, \text{and} \;\, \hat{Z}(\tau,\xi)=(\tilde{T}-t)^2\hat{z}(t,x),\\
		{\mathbf{W}}(\tau,\xi):=(W(\tau,\xi),\hat{W}(\tau,\xi)), \;\; \text{for} \;\; W(\tau,\xi)=(\tilde{T}-t)w(t,x) \;\, \text{and} \;\, \hat{W}(\tau,\xi)=(\tilde{T}-t)^2\hat{w}(t,x).
	\end{gather*}
	Throughout the paper, we use the `hat' notation to shortly denote the partial derivative with respect to the physical time $t$; in particular, above we have  $\hat{z} (t,x):=\partial_tz(t,x)$ and $\hat{w}(t,x):=\partial_tw(t,x)$.
	The advantage of the new coordinate frame is that  self-similar profiles now become \emph{static}, i.e., $\tau$-independent, and the problem of stability of self-similar blowup in finite time, becomes the one of \emph{asymptotic stability} of a steady-state solution (note that $\tau \rightarrow +\infty$ as $t \rightarrow \tilde{T}^-$).  In the new variables, the first-order vector formulation of \eqref{eq_reminder_physical} takes the form
	\begin{align}\label{DPD_self_similar}
		\begin{cases}
			\, \partial_\tau \mathbf{W}=\mathbf{L}_0 \mathbf{W}+\mathbf{n}(\mathbf{W}+\mathbf{Z}),\\
			\, \mathbf{W}(0,\cdot)=\mathbf{W}_0(\tilde{T}),
		\end{cases}    
	\end{align}
	where the linear operator $\mathbf{L}_0$ is given by
	\begin{align}\label{linear_operator_SS}
		\mathbf{L}_0&:=\begin{bmatrix}
			-1-\xi \cdot\nabla ,& 1\\ \Delta& -2-\xi\cdot\nabla
		\end{bmatrix},
	\end{align}
	the nonlinearity $\mathbf{n}$ is defined by        
	\begin{align} 
		\mathbf{n}(\mathbf{K})&:=(0,n_0(\mathbf{K})),\;\, \text{where} \;\,
		n_0(\mathbf{K})=\frac{n-3}{2 | \cdot |^3}\left(2| \cdot | K-\sin(2| \cdot | K)\right) \;\, \text{for} \;\, \mathbf{K}=(K,\hat{K}) \label{nonlinear_operator_physical},
	\end{align}
	and the trace of $\tilde{T}$ shows up only in the initial data in the following manner
	\begin{equation*}
		\mathbf{W}_0(\tilde{T}):=\big(  {\tilde{T}} w(0,{\tilde{T}}\cdot),{\tilde{T}^2}\hat{w}(0,{\tilde{T}}\cdot)  \big)=\big(  {\tilde{T}} u_0({\tilde{T}}\cdot),{\tilde{T}^2}\hat{u}_0({\tilde{T}}\cdot)  \big).
	\end{equation*}
	We then proceed by analyzing stability of the static profile
	\begin{equation*}
		\mathbf{\Phi}:=(\Phi,\hat{\Phi})
	\end{equation*}
	under small perturbations governed by \eqref{DPD_self_similar}.  Since the system is semilinear, we adopt a perturbative approach based on spectral stability. In short, what we do is the following. We consider the ansatz
	\begin{equation*}
	    \mathbf{W}(\tau,\cdot)=\mathbf{\Phi}+\mathbf{\Psi}(\tau,\cdot),
	\end{equation*}
	linearize around $\mathbf{\Phi}$, and then use dynamical systems tools such as fixed point arguments and Lyapunov-Perron methods, along with spectral stability results from \cite{glogic2025globally}, to prove that for any $(u_0,\hat{u}_0)$ close enough to $\mathbf{{u}}_{T}(0)$ and $\tilde{T}$ close enough to $T$, the initial data $\mathbf{\Psi}(0,\cdot)$ is small, and there is, furthermore, a specific choice of $\tilde{T}$, possibly also depending on the realization of the noise, such that the corresponding profile $\mathbf{\Psi}$ is global and exponentially decaying. Then, by undoing the similarity variables, we get that  the solution $w$ to \eqref{eq_reminder_physical} can be written for $t \in [0,\tilde{T})$ in the following form
	\begin{align}\label{eq:ansatz_sol}
		w(t,x)=\frac{1}{\tilde{T}-t}\Phi\left(\frac{\lvert x\rvert}{\tilde{T}-t}\right)+\frac{1}{\tilde{T}-t}{\Psi}\bigg(\log\bigg(\frac{\tilde{T}}{\tilde{T}-t}\bigg),\frac{\lvert x\rvert}{\tilde{T}-t}\bigg),
	\end{align}
	where ${\Psi}(\tau,\cdot)$ goes to $0$ in $\HH_{rad}^{s,k}$ as $\tau\rightarrow +\infty$, for such realization of $\omega$. Consequently, we conclude from the embedding $\mathcal{H}^{s,k}_{rad} \hookrightarrow L^\infty$ that
	\begin{equation*}
		(\tilde{T}-t)w(t,(\tilde{T}-t)\cdot) \rightarrow \Phi
	\end{equation*}
	globally uniformly on $\R^n$ as $t \rightarrow \tilde{T}^-$; cf.~numerical findings \eqref{Eq:Rescaled_convergence}.
	Consequently, by \eqref{Def:z} the same applies to $u$, and by \eqref{Def:U_u} to $\mathcal{U}$ as well, though only locally uniformly.
		Now we provide a more detailed overview of the above summarized argument, and concentrate the discussion around the novelties brought about by the presence of noise.

Linearization around $\mathbf{\Phi}$ leads to the following evolution equation for $\mathbf{\Psi}$
	\begin{align}\label{stability_system_ss}
		\begin{cases}
			\partial_\tau \mathbf{\Psi}=\mathbf{L} \mathbf{\Psi}+\mathbf{N}(\mathbf{\Psi}+\mathbf{Z})+\mathbf{V}\mathbf{Z},\\
			\mathbf{\Psi}(0)=\mathbf{\Psi}_{0,\tilde{T}},
		\end{cases}
	\end{align}
	where 
	\begin{align}\label{initial_conditions_self_sim_perturb}
		\mathbf{\Psi}_{0,\tilde{T}}=({\Psi}_{0,\tilde{T}}, \hat{\Psi}_{0,\tilde{T}})=\left(\tilde{T}u_0(\tilde{T}\cdot)-\Phi,\tilde{T}^2\hat{u}_0(\tilde{T}\cdot)-\hat{\Phi}\right)
	\end{align}
	and
	\begin{gather}
		\mathbf{L}:=\mathbf{L}_0+\mathbf{V},\quad \text{with} \quad \mathbf{V}\mathbf{K}:=
		\begin{bmatrix}\label{linear_perturbed_operators_SS} 0 \\
			\frac{8(n-4)(n-3)K}{(\lvert\xi\rvert^2+n-4)^2}\end{bmatrix}
		 \quad \text{and} \\
		\mathbf{N}(\mathbf{K}):=
		\begin{bmatrix}
			0\\
			\frac{n-3}{2\lvert \xi\rvert^3}\left(\gamma(\lvert \xi\rvert(\Phi+K))-\gamma(\lvert \xi\rvert \Phi)-\gamma'(\lvert \xi\rvert \Phi)(K)\right)
		\end{bmatrix},\quad \gamma(y):=2y-\sin(2y)\label{nonlinear_perturbed_operators_SS}.
	\end{gather}
	Note that at least two differences arise between \eqref{stability_system_ss} and the system treated in \cite{glogic2025globally}. One, which is quite minor, is that here we do not restrict ourselves to the choice of $T=1$ and treat the general case of $T>0$. The second, and much more fundamental, difference is the appearance of $\mathbf{Z}$-terms in \eqref{stability_system_ss} which are due to the presence of noise in \eqref{wave_map_system}. We note that, in general, even when the noise $\mathcal{W}$ is smooth, it acts as an external forcing that may break the stability of the self-similar profile $\mathbf{\Phi}$. Nevertheless, under suitable non-degeneracy assumptions on $\mathcal{W}$, we show that this scenario does not occur with positive probability. In particular, there exists a $\tilde{T}\in \bigl(\tfrac{T}{2},\tfrac{3T}{2}\bigr)$, depending on the realization of the noise $\mathcal{W}$ and on the initial condition $\mathbf{u}_0$, such that one can construct a global solution of \eqref{stability_system_ss} which decays sufficiently fast to $0$ in $\HH_{rad}^{s,k}$ as $\tau \to +\infty$.
	
	The guiding principle of the argument sketched above, in analogy with \cite{glogic2025globally}, is that small perturbations in $\HH_{rad}^{s,k}$ of the initial profile $\mathbf{u}_{T}(0)$ still lead to blowup with positive probability, namely on those realizations where the noise does not dominate the dynamics. We also point out that, a priori, the solutions constructed via the ansatz \eqref{eq:ansatz_sol} may exhibit poor measurability properties, since for each realization of the noise the parameter $\tilde{T}$ depends on the entire realization of $\mathcal{W}$, at least up to time $\tfrac{3}{2}T$. However, due to the uniqueness of $\mathbf{w}$ solving \eqref{fixed_point_problem_v}, it suffices to show that objects of the form \eqref{eq:ansatz_sol} also solve \eqref{fixed_point_problem_v}, thereby recovering the corresponding measurability properties. Moreover, by the definition of the stopping time $\tau^*(\mathbf{u}_0,\omega)$, it coincides with $\tilde{T}$ on the subset of $\Omega$ identified by this construction.
	
	The main result of \autoref{Section_Irreducibility}, stated in \autoref{Thm_irreducibility}, informally asserts that if $\mathbf{u}_0$ is sufficiently regular, then for any $\mathcal{T}, \varepsilon > 0$ and any $\mathbf{u}_1 \in \HH_{rad}^{s,k}$, the solution provided by \autoref{Thm_well_posed} has, with positive probability, no blowup before time $\mathcal{T}$ and, at time $\mathcal{T}$, lies in the open ball of radius $\varepsilon$ in $\HH_{rad}^{s,k}$ centered at $\mathbf{u}_1$. This is an irreducibility result for \eqref{wave_map_system} reflecting some ideas from \cite{de2002effect, flandoli1997irreducibility}. To implement our strategy, we first show that if $\mathbf{u}_1$ is sufficiently smooth, then there exists a smooth function $f$ such that the solution of
	\begin{align*}
		\begin{cases}
			\partial_t^2 u-\Delta u=\dfrac{n-3}{2\lvert x\rvert^3}(2\lvert x\rvert u-\sin (2\lvert x\rvert u))+\partial_t f,\\
			u(0)= u_0,\\
			\partial_t u(0)=\hat{u}_0  
		\end{cases}
	\end{align*}
	coincides with $\mathbf{u}_1$ at time $\mathcal{T}$. Secondly, we establish a form of continuity of the solution map of the above system with respect to the controllers $f$. This is achieved by once again exploiting the Da Prato–Debussche trick: we introduce $\mathbf{z}$ as the solution of the linear wave equation \eqref{linear_wave_evolution} with forcing term $\partial_t f$, and $\mathbf{w}$ as the solution of \eqref{eq_reminder_physical} corresponding to this choice of $\mathbf{z}$. Then we show that if there already exists a solution of the deterministic system \eqref{eq_reminder_physical} up to a deterministic time $\mathcal{T}$, then small perturbations of either the initial condition $\mathbf{u}_0$ or the controller $\mathbf{z}$ still permit the construction of solutions to \eqref{eq_reminder_physical} up to time $\mathcal{T}$. Moreover, these solutions depend continuously on the data in $C([0,\mathcal{T}]; \HH_{rad}^{s,k})$. With these two results in hand, the irreducibility of \eqref{wave_map_system} reduces to a support theorem for $\mathbf{z}$, where we once again make essential use of the non-degeneracy assumptions on the noise $\mathcal{W}$.
	By combining \autoref{stability_lemma} with \autoref{Thm_irreducibility}, we conclude the argument leading to \autoref{main_thm}.
	
	Let us end the section by commenting on the restriction on the Sobolev exponents $s,k$ 
	\begin{align*}
		\frac{d}{2}<s<\frac{d}{2}+\frac{1}{2d},\ k>d+2,\ k\in \N,
	\end{align*}
	appearing in \autoref{Thm_well_posed}, \autoref{corollary_well_posed} and \autoref{main_thm}. As already mentioned in \autoref{remark_numbers}, it guarantees that the nonlinearity in \eqref{wave_map_system} is locally Lipschitz in $\HH_{rad}^{s,k}$ both in physical and self-similar variables (see \autoref{lemma_regularity_nonlinear} below and \cite[Lemma 7.1]{glogic2025globally}), and that the semigroup generated by the wave operator $\mathbf{L}_0$ exhibits exponential decay in $\HH_{rad}^{s,k}$; see  \cite[Proposition 4.1, Proposition 6.2]{glogic2025globally}. Additionally, if $\mathbf{u}=(u,\hat{u}) \in \HH_{rad}^{s,k}$, then $u$ (resp. $\hat{u}$) and its derivatives up to order $\lceil \tfrac{n}{2} \rceil - 1$ (resp. $\lceil \tfrac{n}{2} \rceil - 2$) are uniformly continuous functions. Therefore, $\boldsymbol{\mathcal{U}}(X) = \mathbf{u}(|X|)X$ is a well-defined object in $\HH^{s,k}$, \emph{cf.} \cite[Proposition A.5, Remark A.6]{glogic2022stable}, and it is possible to assign a precise meaning to \eqref{eq_intro_noise}.
	\subsection*{Frequently used conventions}The reader might feel disoriented by the simultaneous consideration of \eqref{eq_intro_noise} and \eqref{wave_map_system}, as well as the fact we work with the former in both physical and self-similar variables. In addition, the combination of the Da Prato–Debussche trick and the self-similar ansatz, as described in \autoref{sec_strategy}, requires the introduction of several additional objects for our analysis. To assist the reader, we summarize our conventions here, providing a more compact overview of the notation used throughout the paper.
	\begin{center}
		\begin{tabular}{||c | c | c ||} 
			\hline
			Object & Physical variables  & Self-Similar variables \\ [0.5ex] 
			\hline\hline Solution of wave map & $\boldsymbol{\mathcal{U}}=(\mathcal{U},\hat{\mathcal{U}})$ & N/A
			\\ \hline 
			\shortstack{Solution of wave map with\\ corotational symmetry}
			& $\mathbf{u}=(u,\hat{u})$ & $\mathbf{U}=(U,\hat{U})$
			\\  \hline Stochastic convolution & $\mathbf{z}=(z,\hat{z})$  & $\mathbf{Z}=(Z,\hat{Z})$ \\ 
			\hline
			Self-Similar Profile & $\mathbf{u}_{T}=(u_{T},\hat{u}_{T})$  & $\mathbf{\Phi}=(\Phi,\hat{\Phi})$ \\
			\hline
			Da Prato-Debussche ansatz & $\mathbf{u}=\mathbf{w}+\mathbf{z},\  \mathbf{w}=(w,\hat{w}) $  & $\mathbf{U}=\mathbf{W}+\mathbf{Z},\ \mathbf{W}=(W,\hat{W}) $ \\
			\hline
			Self-Similar ansatz & N/A  & $\mathbf{W}=\mathbf{\Phi}+\mathbf{\Psi},\ \mathbf{\Psi}=(\Psi,\hat{\Psi}) $\\
			\hline
			Wave operator & $\mathbf{A}$  & $\mathbf{L}_0$ \\
			\hline
			Wave Semigroup & $\mathbf{T}(t)$  & $\mathbf{S}_0(\tau)$ \\
			\hline
			Nonlinearity & $\mathbf{n}=(0,n_0)$, see \eqref{nonlinear_operator_physical}  & $\mathbf{N},$ see \eqref{nonlinear_perturbed_operators_SS} \\ 
			\hline
			\shortstack{Linearization of the nonlinearity \\ around the self-similar profile} & N/A & $\mathbf{V}\cdot=\left(0,
			\frac{8(n-4)(n-3)}{(\lvert\xi\rvert^2+n-4)^2}\cdot\right)$ \\
			\hline \shortstack{Perturbed wave operator} & N/A & $\mathbf{L}$ \\
			\hline \shortstack{Perturbed wave semigroup} & N/A & $\mathbf{S}(\tau)$ \\ \hline
		\end{tabular}
	\end{center}
	Due to the vectorial nature of the wave equation when viewed as an evolution equation, we frequently deal with vector-valued quantities, where the second component is closely related to the time derivative of the first component.\footnote{It corresponds exactly to the time derivative in physical variables, but this is not the case in self-similar variables.} As indicated in the table, we adopt the convention that bold letters denote vector-valued quantities, while unbold letters represent scalar quantities. When it is necessary to refer to specific components, we use the corresponding unbold letter for the first component and the same letter with a hat for the second component.
	
	\subsection{Notation}\label{Notation}
	Let $m\geq 3$. We denote by $C_c^{\infty}(\R^m)$ the standard space of smooth and compactly supported test functions on $\R^m$, $\mathscr{S}(\R^m)$ the space of Schwartz functions on $\R^m$ and $L^2(\R^m)$ the space of square integrable functions. We denote by $\langle\cdot,\cdot\rangle$ (resp. $\norm{\cdot}$) the inner product (resp.~norm) on $L^2(\R^m)$. We also need to introduce the space 
	\begin{align*}
		C_{c,rad}^\infty(\R^m) &:= \{ f : [0,\infty) \rightarrow \mathbb{R} ~ \vert ~ f(\lvert\cdot \rvert) \in C_c^{\infty}(\R^m)  \},\\
		\mathscr{S}_{rad}(\R^m) &:= \{ f : [0,\infty) \rightarrow \mathbb{R} ~ \vert ~ f(\left|\cdot\right|) \in \mathscr{S}(\R^m)  \}.
	\end{align*}
	We will work with Sobolev spaces, both homogeneous and non-homogeneous variants. We denote by $H^r_{rad}(\R^m)$ the closure of $C_{c,rad}^\infty(\R^m)$ with respect to the standard $H^r(\R^m)$ topology. Consequently, we define the product radial Sobolev spaces
	\begin{align*}
	    \mathbf{H}^{r}(\R^m):={H}_{rad}^{r}(\R^m)\times {H}_{rad}^{r-1}(\R^m)
	\end{align*}
	with the corresponding inner product
	\begin{align*}
	    \langle \mathbf{u},\mathbf{v}\rangle_{\mathbf{H}^{r_1}(\R^m)}:=\langle u_1,v_1\rangle_{{H}^{r_1}_{rad}(\R^m)}+\langle u_2,v_2\rangle_{{H}^{r_1-1}_{rad}(\R^m)},
	\end{align*}
	for $\mathbf{u}=(u_1,u_2)$ and $\mathbf{v}=(v_1,v_2).$
	
	 Concerning homogeneous Sobolev spaces, for $r > -\frac{m}{2}$, we define the inner product for test functions $u,v\in C^{\infty}_c(\R^m)$
	\begin{align*}
		\langle u,v\rangle_{\dot{H}^r(\R^m)}:=\langle  \lvert \cdot\rvert^r \mathscr{F}(u), \lvert \cdot\rvert^r \mathscr{F}(v)\rangle,
	\end{align*}
	$\mathscr{F}$ being the Fourier transform. Consequently, we have the norm $\norm{u}_{\dot{H}^r(\R^m)}^2=\langle u,u\rangle_{\dot{H}^r(\R^m)}.$ It is well known that if $r\in \N$ then
	\begin{align*}
		\norm{u}_{\dot{H}^r(\R^m)}^2\simeq \sum_{\lvert \alpha\rvert=r}\norm{ \partial_{\alpha}u}^2.
	\end{align*}
	We furthermore denote by $\dot{H}^r(\R^m)$ (resp.~$\dot{H}^r_{rad}(\R^m)$) the closure of $C^{\infty}_c(\R^m)$ (resp.~$C^{\infty}_{c,rad}(\R^m)$) with respect to $\norm{\cdot}_{\dot{H}^r(\R^m)}$.
	 For $r_2\geq r_1>-\frac{m}{2}$, we define the intersection spaces 
	\begin{equation*}
	    \dot{H}_{rad}^{r_1,r_2}(\R^m):=\dot{H}^{r_1}_{rad}(\R^m)\cap \dot{H}_{rad}^{r_2}(\R^m),
	\end{equation*}
	and, in case of $r_2\geq r_1>-\frac{m}{2}+1$, the following Cartesian product thereof,
	\begin{equation*}
	    \HH_{rad}^{r_1,r_2}(\R^m):=\dot{H}_{rad}^{r_1}(\R^m)\cap \dot{H}_{rad}^{r_2}(\R^m)\times \dot{H}_{rad}^{r_1-1}(\R^m)\cap \dot{H}_{rad}^{r_2-1}(\R^m)
	\end{equation*}
	with respective inner products
	\begin{gather*}
		\langle u,v\rangle_{\dot{H}^{r_1}_{rad}(\R^m)\cap \dot{H}_{rad}^{r_2}(\R^m)}:=\langle u,v\rangle_{\dot{H}^{r_1}_{rad}(\R^m)}+\langle u,v\rangle_{\dot{H}^{r_2}_{rad}(\R^m)},\\
        \langle \mathbf{u},\mathbf{v}\rangle_{\HH_{rad}^{r_1,r_2}(\R^m)}:=\langle u_1,v_1\rangle_{\dot{H}^{r_1}_{rad}(\R^m)\cap \dot{H}_{rad}^{r_2}(\R^m)}+\langle u_2,v_2\rangle_{\dot{H}^{r_1-1}_{rad}(\R^m)\cap \dot{H}_{rad}^{r_2-1}(\R^m)},
    \end{gather*}
    for $u,v\in \dot{H}^{r_1}_{rad}(\R^m)\cap \dot{H}_{rad}^{r_2}(\R^m)$ and $\mathbf{u}=(u_1,u_2),\mathbf{v}=(v_1,v_2)\in \HH_{rad}^{r_1,r_2}(\R^m)$.
	Moreover, for each $T_1>0,$ $\alpha\in (0,\frac{1}{2}),\ p\in (2,+\infty)$ with $\alpha p>1$, we define 
    \begin{gather*}
		\mathcal{E}^r_{0,T_1}:=\left\{\mathbf{z}=(z,\hat z)\in C([0,T_1];\mathbf{H}^{r+\frac{n+1}{2}}(\R^m)): z(0)=\hat{z}(0)=0,\ z(t)=\int_0^t \hat{z}(s)ds \right\},\\
		\mathcal{W}^{\alpha,p,r}_{0,T_1}:=\left\{f\in W^{\alpha,p}(0,T_1;H^{r+\frac{n+1}{2}}_{rad}(\R^m)): f(0)=0\right \},
	\end{gather*} endowed with the strong topology of $C([0,T_1];\mathbf{H}^{r+\frac{n+1}{2}}(\R^m))$ and $W^{\alpha,p}(0,T_1;H^{r+\frac{n+1}{2}}_{rad}(\R^m))$ respectively. The equality $z(t)=\int_0^t \hat{z}(s)ds$ has to be understood in the sense of tempered distributions $\mathscr{S}'$. Obviously, $\mathcal{E}_{0,T_1}^r, \mathcal{W}^{\alpha,p,r}_{0,T_1} $ are separable Banach spaces. Finally, we adopt the usual asymptotic notation $a \lesssim b$ to denote $a \leq Cb$ for some constant $C>0$. Furthermore, we write $a \simeq b$ if both $a \lesssim b$ and $b \lesssim a$ hold.
	For simplicity, in the following we do not explicitly indicate the dependence of our function spaces on the underlying spatial dimension, unless it differs from $n=d+2$.
    
\section{Preliminaries}	\label{sec:Stoch_well-posednes}
\noindent This section collects several auxiliary results needed for the proof of \autoref{main_thm}. In \autoref{prel_physical} we introduce the basic notation for the linear wave equation and provide proofs of \autoref{Thm_well_posed} and a support theorem for the stochastic convolution originating from zero initial data. \autoref{prel_self_similar} then turns to the analysis in self-similar variables, and recalls the main elements of the linearized and spectral framework introduced in \cite{glogic2025globally}.
	
\subsection{Analysis in physical variables:~well-posedness and stochastic convolution}\label{prel_physical}
Denote by $\mathbf{T}(t)$  the semigroup associated with the linear wave equation and by $\mathbf{A}$ its infinitesimal generator. According to \cite[Chapter 7.4]{pazy2012semigroups}, these operators are well defined both in homogeneous and non-homogeneous Sobolev spaces and preserve radial symmetry. Hence, for each $r_2\geq r_1> -\frac{n}{2}+1$, $\mathbf{T}(t)$ is a bounded linear operator on $\HH_{rad}^{r_1,r_2}$ (resp. $\mathbf{H}^{r_1}$) and its infinitesimal generator 
    \begin{align*}
    \mathbf{A}:D(\mathbf{A})\subseteq  \HH_{rad}^{r_1,r_2}\rightarrow \HH_{rad}^{r_1,r_2} \quad \text{(resp. } \mathbf{A}:D(\mathbf{A})\subseteq  \mathbf{H}^{r_1}\rightarrow \mathbf{H}^{r_1})
    \end{align*}
satisfies $D(\mathbf{A})=\HH_{rad}^{r_1,r_2+1}$ (resp. $D(\mathbf{A})=\mathbf{H}^{r_1+1})$. For smooth initial data $\mathbf{u}_0=(u_0,\hat{u}_0),$  the function $ \mathbf{u}(t)=(u(t),\hat{u}(t))=\mathbf{T}(t)\mathbf{u}_0$ is the unique solution of the wave equation
	\begin{align}\label{linear_wave_1}
		\begin{cases}
			\partial_t^2u=\Delta u,\\
			u(0)=u_0,\\
			\partial_tu(0)=\hat{u}_0,  
		\end{cases} 
	\end{align}
	that is, $\mathbf{u}(t)$ satisfies the first-order evolution system
\begin{align}\label{linear_wave_evolution}
		\begin{cases}
			\partial_t u=\hat{u},\\
			\partial_t \hat{u}=\Delta u,\\
			u(0)=u_{0},\\
			\hat{u}(0)=\hat{u}_0.
		\end{cases}
	\end{align}
	Consider a complete filtered probability space $(\Omega,\mathcal{F},(\mathcal{F}_t)_{t\geq 0},\mathbb{P})$ with right-continuous filtration, and fix $s, k$ as in \autoref{Thm_well_posed} and \autoref{main_thm}. The Brownian motion $\mathcal{W}$ appearing in \eqref{wave_map_system} is assumed to satisfy different, increasingly restrictive conditions depending on whether the focus is on local well-posedness, i.e., \autoref{Thm_well_posed} and \autoref{corollary_well_posed}, or also on the blowup properties, i.e., \autoref{main_thm}. In the former case, the following assumption is imposed.
	\begin{assumption}\label{HP_noise_1}
		Let $\mathcal{W}$ be an infinite-dimensional Brownian motion adapted to $(\mathcal{F}_t)_{t \ge 0}$ with values in $\dot{H}^{s-1,k-1}_{\mathrm{rad}}$.  
	\end{assumption}
	More explicitly, $\mathcal{W}$ is $\mathbb{P}$-a.s.~continuous process taking values in $\dot{H}^{s-1,k-1}_{\mathrm{rad}}$: if $U$ is a separable Hilbert space and $B_t$ a cylindrical Brownian motion on $U$, there exists $J \in L(U;\dot{H}^{s-1,k-1}_{\mathrm{rad}})$ such that $JJ^*$ has finite trace and 
\begin{align}\label{formulation_2_noise}
  \mathcal{W}_t = J B_t.
\end{align}
Under these assumptions on $\mathcal{W}$, it follows from \cite{da2014stochastic} that the stochastic wave equation
\begin{align}\label{Eq:Stoch_lin_wave}
\begin{cases}
\partial_t^2 z = \Delta z + \dot{\mathcal{W}},\\
z(0) = 0,\\
\partial_t z(0) = 0,
\end{cases}
\end{align}
admits a unique mild (equivalently, weak) solution $\mathbf{z} \in C([0,+\infty); \HH_{\mathrm{rad}}^{s,k})$, which satisfies
\begin{align}\label{stochastic_wave}
\begin{cases}
d\mathbf{z} = \mathbf{A}\mathbf{z}\,dt + d\mathbb{W}_t,\\
\mathbf{z}(0) = 0,
\end{cases}
\qquad \mathbb{W}_t = (0, \mathcal{W}_t),
\end{align}
and is given by the mild formulation
	\begin{align*}
	    \mathbf{z}(t)=\int_0^t \mathbf{T}(t-s)d\mathbb{W}_s.
	\end{align*}
	We now turn to properties of the nonlinearity in \eqref{wave_map_system}.  
Let $\mathbf{n}$ and $n_0$ be as defined in \eqref{nonlinear_operator_physical}.  
Then $n_0$ satisfies the following estimate on $\dot{H}^{s-1,k}$.
	\begin{lemma}\label{lemma_regularity_nonlinear}
		Let $n\geq 5$ and $s,k>0$ satisfy \begin{align*}
			\frac{n}{2}-1<s<\frac{n}{2}-1+\frac{1}{2n-4},\ k>n,\ k\in \N.\end{align*}   
		Then we have that
		\begin{align*}
			\norm{n_0(u)-n_0(v)}_{\dot{H}^{s-1,k}_{rad}} \lesssim \norm{u-v}_{\dot{H}^{s,k}_{rad}}\left(\norm{u}_{\dot{H}^{s,k}_{rad}}+\norm{v}_{\dot{H}^{s,k}_{rad}}\right)^2\left(1+\norm{u}_{\dot{H}^{s,k}_{rad}}^{2k}+\norm{v}_{\dot{H}^{s,k}_{rad}}^{2k}\right),
		\end{align*}
    for all $u,v\in \dot{H}^{s,k}$.
	\end{lemma}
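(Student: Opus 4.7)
My plan is to exploit the cubic vanishing of the nonlinearity at the origin in order to rewrite $n_{0}$ in a form free of apparent singularities. Since $\gamma(y)=2y-\sin 2y$ is odd and analytic with a triple zero at $y=0$, one can write $\gamma(y)=y^{3}g(y^{2})$ with $g$ entire, so that
\[
n_{0}(u)=\tfrac{n-3}{2}\,u^{3}\,g\bigl(|x|^{2}u^{2}\bigr).
\]
An elementary computation shows that $g$ is bounded on $[0,\infty)$ together with all its derivatives (the asymptotic $g(w)\sim 2/w$ as $w\to\infty$, inherited from $\gamma(y)/y^{3}\sim 2/y^{2}$, persists under differentiation with improved decay rates). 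This exhibits $n_{0}$ as the smooth product of a cubic factor $u^{3}$ with a uniformly bounded composition, which already accounts for the quadratic prefactor $(\norm{u}+\norm{v})^{2}$ in the claimed bound. I would then linearize the difference through the fundamental theorem of calculus,
\[
n_{0}(u)-n_{0}(v)=(u-v)\int_{0}^{1}n_{0}'\bigl(v+t(u-v)\bigr)\,dt,
\]
where $n_{0}'(w)=w^{2}\widetilde g(|x|^{2}w^{2})$ for another smooth bounded function $\widetilde g$, and then estimate the two components of the intersection norm $\dot H^{s-1,k}_{rad}$ separately.

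For the $\dot H^{s-1}_{rad}$ part I would apply a fractional Kato--Ponce product inequality, pairing $u-v\in\dot H^{s-1}$ against the integrand in $L^{\infty}$, together with a fractional Moser composition estimate for $\widetilde g(|x|^{2}w^{2})$. The radial Sobolev embedding $\dot H^{s,k}_{rad}\hookrightarrow L^{\infty}$, valid since $s<\tfrac{n}{2}<k$, controls $\norm{w}_{L^{\infty}}$ by $\norm{w}_{\dot H^{s,k}}$, while the boundedness of $\widetilde g$ and its derivatives absorbs the unboundedness of $|x|$. For the $\dot H^{k}_{rad}$ part, since $k\in\mathbb{N}$ I would expand $\partial^{\alpha}$ with $|\alpha|=k$ directly via the Leibniz rule over the product $(u-v)\cdot\int_{0}^{1}n_{0}'(\cdots)\,dt$, and further expand the derivatives of $\widetilde g(|x|^{2}w^{2})$ via Fa\`a di Bruno. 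Each resulting term becomes a product of derivatives of $u$ and $v$ in which one factor is placed in $L^{2}$ using $\dot H^{k}$-regularity while the remainder are placed in $L^{\infty}$ via the embedding. Equivalently, one can work from the Taylor series $n_{0}(u)=(n-3)\sum_{j\geq 1}(-1)^{j+1}\tfrac{2^{2j}}{(2j+1)!}|x|^{2(j-1)}u^{2j+1}$ and apply Moser's inequality term-by-term: truncating at $j\sim k$ produces polynomials of degree up to $2k+1$, whose $\dot H^{k}$-Moser estimates deliver precisely the factor $(1+\norm{u}^{2k}+\norm{v}^{2k})$ stated in the lemma.

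The main obstacle will be handling the weights $|x|^{m}$ generated by differentiating $\widetilde g(|x|^{2}w^{2})$: each derivative of the argument produces factors such as $|x|w^{2}$ or $|x|^{2}w\,\partial w$ which are not in $L^{\infty}$ on their own. The resolution is that the derivatives $\widetilde g^{(m)}(w)$ decay polynomially at infinity (roughly like $w^{-m-1}$), so the combinations $|x|^{2m}w^{2m}\widetilde g^{(m)}(|x|^{2}w^{2})$ that actually occur in the Fa\`a di Bruno expansion are pointwise bounded, the apparent growth in $|x|$ being cancelled by the decay of $\widetilde g^{(m)}$. For the tail of the series the combinatorial factor $(2j+1)!^{-1}$ of the Taylor coefficients provides absolute summability, and one uses the radial Strauss-type decay of $w$ at infinity to ensure that the high-$j$ terms contribute a bounded remainder. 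Collecting the two regimes delivers the claimed estimate with polynomial growth of degree $2k$ in the intersection norms.
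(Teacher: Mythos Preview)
Your overall strategy coincides with the paper's: both exploit the cubic vanishing of $\gamma(y)=2y-\sin 2y$ at the origin to cancel the $|x|^{-3}$ singularity. The paper does this via a threefold application of the fundamental theorem of calculus (using $\gamma(0)=\gamma'(0)=\gamma''(0)=0$) so that
\[
n_{0}(v)-n_{0}(u)=\tfrac{n-3}{2}(v-u)\int_{0}^{1}\!\!\int_{0}^{1}\!\!\int_{0}^{1}\alpha\bigl((1-\lambda)u+\lambda v\bigr)^{2}\,\gamma'''\bigl(|x|\alpha l((1-\lambda)u+\lambda v)\bigr)\,dl\,d\alpha\,d\lambda,
\]
and then invokes a single black-box Schauder estimate (Proposition~A.1 of \cite{glogic2025globally}) for products of the form $u_{1}u_{2}u_{3}F(|x|u_{4})$ with $F=\gamma'''=8\cos(2\,\cdot)$, which is smooth with \emph{all} derivatives bounded. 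Your factorisation $\gamma(y)=y^{3}g(y^{2})$ followed by one application of the fundamental theorem of calculus is algebraically equivalent, since $\tilde g(y^{2})=\gamma'(y)/y^{2}$.

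The genuine gap is your decay claim for $\tilde g^{(m)}$. You assert $\tilde g^{(m)}(W)\sim W^{-m-1}$, so that $W^{m}\tilde g^{(m)}(W)$ stays bounded. This is false: from $\tilde g(W)=(2-2\cos 2\sqrt{W})/W$ one computes that each $W$-derivative of the oscillatory piece loses only a factor $W^{-1/2}$ (via $\tfrac{d}{dW}\cos 2\sqrt{W}=-W^{-1/2}\sin 2\sqrt{W}$), so that in fact $\tilde g^{(m)}(W)=O(W^{-1-m/2})$ and hence $W^{m}\tilde g^{(m)}(W)=O(W^{m/2-1})$, which is \emph{unbounded} for $m\geq 3$. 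Concretely, the Fa\`a di Bruno term in $\partial^{\alpha}\bigl[w^{2}\tilde g(|x|^{2}w^{2})\bigr]$ where every derivative hits $w$ inside the argument yields a factor $|x|^{2m}w^{m+2}\tilde g^{(m)}(|x|^{2}w^{2})$; at a point with $|x|=R$ large and $w(x)\sim 1/R$ (so $|x|^{2}w^{2}\sim 1$) this is of size $R^{m-2}$, and $k>n\geq 5$ forces $m$ well beyond~$3$. Your proposed cancellation therefore does not close pointwise.

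The fix is precisely what the paper's parameterisation achieves: work with the argument $|x|w$ rather than $|x|^{2}w^{2}$. Writing $\tilde g(|x|^{2}w^{2})=G(|x|w)$ with $G(y)=\gamma'(y)/y^{2}=4\sin^{2}y/y^{2}$, one has $G$ smooth, even, and bounded \emph{together with all its derivatives}; then $(u-v)\,w^{2}\,G(|x|w)$ is exactly in the scope of the cited Schauder estimate. Equivalently, the paper's triple integral produces $\gamma'''(|x|\,\cdot)$ directly, avoiding the square-root loss altogether.
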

	\begin{proof}
		Let us first write the nonlinearity $n_0$ as
        \begin{align*}
		    n_0(h)=\frac{n-3}{2|x|^3}\gamma(|x|h),
		\end{align*}
        where $\gamma$ is defined in \eqref{nonlinear_perturbed_operators_SS}. 
		Since $\gamma'(0)=\gamma''(0)=0$, repeated application of the fundamental theorem of calculus yields
\begin{align*}
			&\gamma(v)-\gamma(u)\\ &=(v-u)\int_0^1  \gamma'((1-\lambda)u+\lambda v) d\lambda\\ & =  (v-u)\int_0^1 \left((1-\lambda) u+\lambda v\right) \int_0^1 \gamma''(\left((1-\lambda)u+\lambda v\right)\alpha) d\alpha d\lambda\\ & = (v-u)\int_0^1 \left((1-\lambda) u+\lambda v \right)\int_0^1 \left(\left((1-\lambda)u+\lambda v \right)\alpha\right) \int_0^1 \gamma'''(\left((1-\lambda)u+\lambda v \right)\alpha l)dl d\alpha d\lambda.
		\end{align*}
		The desired estimate then follows from  the Schauder estimate \cite[Proposition~A.1]{glogic2025globally} with $F(h) := \gamma'''(h)= 8\cos(2h)$.
	\end{proof}
From \autoref{lemma_regularity_nonlinear}, one immediately obtains an analogous estimate for $\mathbf{n}$ on $\HH_{\mathrm{rad}}^{s,k}$. In particular, if $\mathbf{u} \in \HH_{\mathrm{rad}}^{s,k}$, then $\mathbf{n}(\mathbf{u})$ is well defined as an element of $\HH_{\mathrm{rad}}^{s,k}$. With these definitions in place, we recall for the reader’s convenience the notion of a mild solution to \eqref{wave_map_system}, referring to \cite{da2014stochastic} and \cite{agresti2025nonlinear} for further details.

\begin{definition}\label{mild_solution}
Let $\mathbf{u}_0 = (u_0, \hat{u}_0)$ be an $\mathcal{F}_0$-measurable random variable with values in $\HH_{\mathrm{rad}}^{s,k}$, and let $\tau$ be a stopping time.  
A progressively measurable process $\mathbf{u} : \Omega \times [0, +\infty) \to \HH_{\mathrm{rad}}^{s,k}$ is called a \emph{mild solution} of \eqref{wave_map_system} on $[0,\tau]$ if  
\(\mathbf{u} \in C([0, \tau]; \HH_{\mathrm{rad}}^{s,k})\) $\mathbb{P}$-a.s. and satisfies
\begin{align*}
\mathbf{u}(t)
  = \mathbf{T}(t)\mathbf{u}_0
  + \int_0^t \mathbf{T}(t-s)\mathbf{n}(\mathbf{u}(s))\,ds
  + \int_0^t \mathbf{T}(t-s)\,d\mathbb{W}_s,
  \qquad \mathbb{P}\text{-a.s. for all } t \in [0,\tau].
\end{align*}
Furthermore, a progressively measurable process $\boldsymbol{\mathcal{U}} = (\mathcal{U}, \hat{\mathcal{U}}) : \Omega \times [0, +\infty) \to \HH^{s,k}$ is called a \emph{corotational mild solution} of \eqref{eq_intro_noise} on $[0,\tau]$ if it is induced by a mild solution $\mathbf{u} = (u, \hat{u})$ of \eqref{wave_map_system} through
\[
\mathcal{U}^i(t,X) = u(t, |X|)X^i, \qquad
\hat{\mathcal{U}}^i(t,X) = \hat{u}(t, |X|)X^i.
\]
\end{definition}

We are now in a position to state the local well-posedness result for \eqref{wave_map_system}.
	\begin{proof}[Proof of \autoref{Thm_well_posed}.]
		We employ a fixed point argument and look for solutions of\begin{align*}
			\mathbf{u}(t)=\mathbf{T}(t)\mathbf{u}_0+\int_0^t \mathbf{T}(t-s)\mathbf{n(\mathbf{u}(s))} ds+\int_0^t \mathbf{T}(t-s)d\mathbb{W}_s.
		\end{align*}
		Due to the regularity of $\mathcal{W}_t,$ we define 
        \begin{align*}
        \mathbf{z}(t)=\int_0^t \mathbf{T}(t-s)d\mathbb{W}_s\in C([0,+\infty);\HH_{rad}^{s,k})\quad \mathbb{P}\text{-a.s.}    
        \end{align*}
        Then, finding a solution $\mathbf{u}$ is equivalent to solving the fixed point problem
		\begin{align}\label{fixed_point_problem_v}
			\mathbf{w}(t)=\mathbf{T}(t)\mathbf{u}_0+\int_0^t \mathbf{T}(t-s)\mathbf{n(\mathbf{w}(s)+\mathbf{z}(s))} ds. 
		\end{align}
		Let $r(\omega)=\lVert \mathbf{u}_0(\omega)\rVert_{\HH_{rad}^{s,k}}<+\infty \ \mathbb{P}$-a.s. and define, for such $\omega$, the map
		\begin{align*}
			\Gamma[\mathbf{h}](t)=\mathbf{T}(t)\mathbf{u}_0+\int_0^t \mathbf{T}(t-s)\mathbf{n(\mathbf{h}(s)+\mathbf{z}(s))} ds.
		\end{align*}
		Let $M\geq 1$ be such that 
		\begin{align}\label{growth_wave_semigroup}
			\lVert \mathbf{T}(t)\rVert_{\HH_{rad}^{s,k}}\leq M e^t.
		\end{align}
		We aim to show that $\Gamma$ has a fixed point in the closed ball of radius $1 + (M+1) r(\omega)$ in $C([0,T^*(\omega)]; \HH_{rad}^{s,k})$ for sufficiently small $T^*(\omega)$. For notational simplicity, we omit the $\omega$ in what follows. Since $\mathbf{T}(t)$ is a strongly continuous semigroup on $\HH_{rad}^{s,k}$, it follows immediately that $\Gamma[\mathbf{h}]\in C([0,+\infty);\HH_{rad}^{s,k})$ whenever $\mathbf{h}\in C([0,+\infty);\HH_{rad}^{s,k})$. Moreover, by \autoref{lemma_regularity_nonlinear}
		\begin{align*}
			\lVert\Gamma [\mathbf{h}](t)\rVert_{\HH_{rad}^{s,k}}&\leq M e^{T^*} r+C_{M,s,k}\left(\lVert \mathbf{z}\rVert_{C([0,T^*];\HH_{rad}^{s,k})}+r\right)^3(1+\lVert \mathbf{z}\rVert_{C([0,T^*];\HH_{rad}^{s,k})}^{2k}+r^{2k})(e^{T^*}-1)\\ & \leq  1+(M+1)r,
		\end{align*}
		if $T^*$ is sufficiently small. Hence, $\Gamma$ maps the closed ball of radius $1 + (M+1) r$ into itself. Next, we verify that $\Gamma$ is a contraction. Using the properties of the wave semigroup, we have
		\begin{multline*}
			\lVert\Gamma[\mathbf{h}_1](t)-\Gamma[\mathbf{h}_2](t)\rVert_{\HH_{rad}^{s,k}}\\ \lesssim \lVert \mathbf{h}_1-\mathbf{h}_2\rVert_{C([0,T^*];\HH_{rad}^{s,k})} \left(\lVert \mathbf{z}\rVert_{C([0,T^*];\HH_{rad}^{s,k})}+r\right)^2(1+\lVert \mathbf{z}\rVert_{C([0,T^*];\HH_{rad}^{s,k})}^{2k}+r^{2k})(e^{T^*}-1)  \\  \leq \frac{1}{2}\lVert\mathbf{h}_1-\mathbf{h}_2\rVert_{C([0,T^*];\HH_{rad}^{s,k})}, 
		\end{multline*}
		for possibly smaller $T^*$. This implies by standard arguments all the claims. In particular, the construction implies that $\tau^* > 0$ $\mathbb{P}$-a.s.
	\end{proof}
	
	In order to prove our main result, \autoref{main_thm}, we need some additional properties of $\mathbf{z}$, both in terms of its regularity and the support of its law. To this end, we introduce the following assumption.
	\begin{assumption}\label{hp_noise_2}
		Let $\mathcal{W}$ be an infinite-dimensional Brownian motion, adapted to $(\mathcal{F}_t)_{t\ge 0}$, with values in $H^{k+\frac{n+1}{2}}_{rad}$. Moreover, we assume that $\mathcal{W}$ is non-degenerate on $H^{k+\frac{n+1}{2}}_{rad}$.
	\end{assumption}
	By non-degeneracy, we mean that the support of $\mathcal{W}$ coincides with the full space $\mathcal{W}^{\alpha,p,k}_{0,T}$ for each choice of $\alpha,p,T$ satisfying \begin{align*}
	    T>0,\quad \alpha\in (0,\frac{1}{2}),\quad p\in (2,+\infty),\quad \alpha p>1.
	\end{align*} This property holds provided that the operator $J$ appearing in \eqref{formulation_2_noise} satisfies \begin{align*}
	    J\in L(U;H^{k+\frac{n+1}{2}}_{rad}),\quad JJ^* \text{ has finite trace and } Ker(J^*)=\{0\}.
	\end{align*} Under this assumption, which is more restrictive than \autoref{HP_noise_1}, we have  
    \begin{align*}
	    \mathbf{z}\in C([0,+\infty);\mathbf{H}^{k+\frac{n+1}{2}}).
	\end{align*}
	The following lemma, concerning the support of $\mathbf{z}$, the solution of \eqref{stochastic_wave}, is likely well-known to experts. However, since we were unable to find a suitable reference, we provide a proof for completeness.
	\begin{lemma}\label{lemma_support_z}
		Let $T>0$ and suppose that \autoref{hp_noise_2} holds. If $\mathbf{z}$ is the mild solution of \eqref{stochastic_wave} on $[0,T]$, then the support of $\mathbf{z}$ is the full space $\mathcal{E}_{0,T}^k$.
	\end{lemma}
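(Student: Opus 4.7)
The natural approach is to realize $\mathbf{z}$ as the deterministic image of the noise $\mathcal{W}$ under a continuous linear solution map, and then invoke the general support formula for pushforward measures. Concretely, I would define
\begin{align*}
\Phi:\mathcal{W}^{\alpha,p,k}_{0,T}\to \mathcal{E}^k_{0,T},\qquad \Phi(f):=\bigl(v,\partial_t v\bigr),
\end{align*}
where $v$ is the mild solution of the linear wave equation with zero initial data and forcing $\partial_t f$. Since $f(0)=0$, a deterministic integration by parts gives
\begin{align*}
\Phi(f)(t)=\bigl(0,f(t)\bigr)+\int_0^t \mathbf{T}(t-s)\bigl(f(s),0\bigr)\,ds,
\end{align*}
an expression meaningful as soon as $f\in C\bigl([0,T];H^{k+\frac{n+1}{2}}_{rad}\bigr)$. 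Combined with the embedding $\mathcal{W}^{\alpha,p,k}_{0,T}\hookrightarrow C\bigl([0,T];H^{k+\frac{n+1}{2}}_{rad}\bigr)$ (valid because $\alpha p>1$) and the boundedness of the wave semigroup, this shows $\Phi\in L\bigl(\mathcal{W}^{\alpha,p,k}_{0,T};\mathcal{E}^k_{0,T}\bigr)$. A standard stochastic integration by parts (classical in the Da Prato--Zabczyk framework for additive forcing) then yields $\mathbf{z}=\Phi(\mathcal{W})$ $\mathbb{P}$-a.s., so that the law of $\mathbf{z}$ coincides with the pushforward $\Phi_{*}\mu_{\mathcal{W}}$.

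The general identity $\operatorname{supp}\Phi_{*}\mu=\overline{\Phi(\operatorname{supp}\mu)}$ for continuous maps between separable metric spaces, together with \autoref{hp_noise_2} (which asserts $\operatorname{supp}\mu_{\mathcal{W}}=\mathcal{W}^{\alpha,p,k}_{0,T}$), reduces the lemma to showing that the range of $\Phi$ is dense in $\mathcal{E}^k_{0,T}$. For this I would argue by smoothing. Given a target $(z^*,\hat z^*)\in \mathcal{E}^k_{0,T}$, I approximate it in the topology of $\mathcal{E}^k_{0,T}$ by a sequence $(z^*_\varepsilon,\partial_t z^*_\varepsilon)$ with $z^*_\varepsilon\in C^1\bigl([0,T];H^{k+\frac{n+1}{2}+2}_{rad}\bigr)$ satisfying $z^*_\varepsilon(0)=\partial_t z^*_\varepsilon(0)=0$; such approximants are obtained by composing a spatial mollification with a temporal regularization preceded by a cut-off $\chi(t/\delta)$ that preserves the vanishing of the initial conditions. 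For each such $z^*_\varepsilon$ I set
\begin{align*}
f_\varepsilon(t):=\partial_t z^*_\varepsilon(t)-\int_0^t\Delta z^*_\varepsilon(s)\,ds,
\end{align*}
which belongs to $\mathcal{W}^{\alpha,p,k}_{0,T}$ thanks to the two extra derivatives gained on $z^*_\varepsilon$, satisfies $f_\varepsilon(0)=0$, and by direct substitution verifies $\Phi(f_\varepsilon)=(z^*_\varepsilon,\partial_t z^*_\varepsilon)$.

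The main technical obstacle lies in this density step: because $\mathcal{E}^k_{0,T}$ encodes both the one-derivative mismatch between its components and the compatibility $\partial_t z=\hat z$, and insists on the vanishing at $t=0$ of both components, the smoothing has to simultaneously promote $\hat z^*$ into $H^{k+\frac{n+1}{2}}_{rad}$ (so that the $\partial_t z^*_\varepsilon$ term places $f_\varepsilon$ in $\mathcal{W}^{\alpha,p,k}_{0,T}$), promote $z^*$ into $H^{k+\frac{n+1}{2}+2}_{rad}$ (so that the Duhamel-type corrector $\int_0^t\Delta z^*_\varepsilon\,ds$ also lands there), preserve both initial conditions, and still converge to $(z^*,\hat z^*)$ in $\mathcal{E}^k_{0,T}$. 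Once this bookkeeping is settled, combining the continuity of $\Phi$, the density of its range, and the pushforward support formula concludes the proof of the lemma.
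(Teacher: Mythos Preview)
Your proposal is correct and follows essentially the same approach as the paper: both arguments rely on the continuity of the solution map $f\mapsto\mathbf{z}$ from $\mathcal{W}^{\alpha,p,k}_{0,T}$ to $\mathcal{E}^k_{0,T}$ (your integration-by-parts formula matches the paper's expression $\mathbf{z}^*(t)=\mathbf{F}^*(t)+\int_0^t\mathbf{T}(t-s)\mathbf{A}\mathbf{F}^*(s)\,ds$), approximate a general target in $\mathcal{E}^k_{0,T}$ by a smooth element via space--time mollification preserving the zero initial conditions, construct an explicit preimage $f$ for the smooth approximant, and then invoke the full support of $\mathcal{W}$. The only cosmetic difference is that you package the conclusion via the abstract pushforward support identity $\operatorname{supp}\Phi_*\mu=\overline{\Phi(\operatorname{supp}\mu)}$, whereas the paper unfolds this directly by showing each open ball in $\mathcal{E}^k_{0,T}$ has positive probability.
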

	\begin{proof}
		Let $\tilde{\mathbf{z}} \in \mathcal{E}_{0,T}^k$ and $\varepsilon > 0$. It is enough to show that
		\begin{align*}
			\mathbb{P}\left(\lVert \mathbf{z}-\Tilde{\mathbf{z}}\rVert_{C([0,T];\mathbf{H}^{k+\frac{n+1}{2}})}\leq \eps\right)>0.
		\end{align*}
		Let $\mathbf{z}^* = (z^*, \hat{z}^*) \in \mathcal{E}_{0,T}^k$ be smooth and satisfy\footnote{This can be done introducing two families of mollifiers: one in space, $\chi_{\delta_1}(x)=\frac{1}{\delta_1^n}\chi(\frac{x}{\delta_1}),$ with $ \chi\in C^{\infty}_{c,rad}(\R^n),\ \int_{\R^n}\chi=1$, and one in time, $\hat{\chi}_{\delta_2}(s)=\frac{1}{\delta_2}\hat\chi(\frac{s}{\delta_2}),$ with $\hat\chi\in C^{\infty}_c(0,1),\ \int_0^1 \hat{\chi}(s)ds=1.$ Choosing $\delta_1, \delta_2$ sufficiently small, the couple $\left(\int_0^{\infty}\int_{\R^n} \Tilde{z}(t-s,x-y)\chi_{\delta_1}(y)\hat{\chi}_{\delta_2}(s)ds dy;\int_0^{\infty}\int_{\R^n} \hat{\Tilde{z}}(t-s,x-y)\chi_{\delta_1}(y)\hat{\chi}_{\delta_2}(s)ds dy\right)$ satisfies the required properties.}
        \begin{align*}
			\lVert {\mathbf{z}}^*-\Tilde{\mathbf{z}}\rVert_{C([0,T];\mathbf{H}^{k+\frac{n+1}{2}})}\leq \frac{\eps}{2}.    
		\end{align*}
        It is then enough to prove
		\begin{align*}
			\mathbb{P}\left(\lVert \mathbf{z}-{\mathbf{z}}^*\rVert_{C([0,T];\mathbf{H}^{k+\frac{n+1}{2}})}\leq \frac{\eps}{2}\right)>0.
		\end{align*}
		Since $\mathbf{z}^*$ is smooth, there exists $f^* \in \mathcal{W}_{0,T}^{\alpha,p,k}$ such that \begin{align*}
			{\mathbf{z}}^*(t)=\mathbf{F}(t)+\int_0^t \mathbf{T}(t-s)\mathbf{A}\mathbf{F}^*(s)ds,\ \text{with } \mathbf{F}^*(t)=(0,f^*(t)),
		\end{align*} 
		or equivalently
		\begin{align*}
			\begin{cases}
				\partial_t z^*=\hat{z}^*, \\
				\partial_t \hat{z}^*= \Delta z^*+\partial_t f^*,\\
				z^*(0)=0,\\
				\hat{z}^*(0)=0.
			\end{cases}
		\end{align*}
		Moreover, the solution map is continuous from $\mathcal{W}_{0,T}^{\alpha,p,k}$ to $\mathcal{E}^k_{0,T}$.
		Therefore, there exists a neighborhood of $f^*$ in $\mathcal{W}^{\alpha,p,k}_{0,T}$, denoted by $B_{\eps,f^*}$, such that for any ${f}^{**}\in B_{\eps,f^*} $ the solution of the wave equation with forcing $\partial_t\hat f^{**}$, which we denote by $\mathbf{z}^{**}$, satisfies
		\begin{align*}
			\lVert \mathbf{z}^{**}-{\mathbf{z}}^*\rVert_{C([0,T];\mathbf{H}^{k+\frac{n+1}{2}})}\leq \frac{\eps}{2}.    
		\end{align*}
		In conclusion, we have
		\begin{align*}
			\mathbb{P}\left(\lVert \mathbf{z}-\Tilde{\mathbf{z}}\rVert_{C([0,T];\mathbf{H}^{k+\frac{n+1}{2}})}\leq \eps\right)\geq \mathbb{P}\left(\mathcal{W}\in B_{\eps,f^*} \right). 
		\end{align*}
		The latter is positive, due to the properties of the support of the Brownian motion $\mathcal{W}$.
	\end{proof}
\subsection{Analysis in self-similar variables:~linearized dynamics}\label{prel_self_similar}
	As explained in \autoref{sec_strategy}, to study the stability of self-similar solutions, it is convenient to pass to similarity variables, thereby  reformulating the wave maps equation in these coordinates. Additionally, in this section we recall some properties of the wave operator in these variables, referring to \cite[Sections 4–7]{glogic2025globally} for detailed proofs. 
    
    For $\tilde{T}>0$ we define the new time and space variables
	\begin{align*}
		\tau=\log\left(\frac{\tilde{T}}{\tilde{T}-t}\right),\ \xi=\frac{x}{\tilde{T}-t},
	\end{align*}
	and, relative to \eqref{intro_eq_2}, let
	\begin{align*}
		U(\tau,\xi):=(\tilde{T}-t)u(t,x),\quad \hat{U}(\tau,\xi):=(\tilde{T}-t)^2 \hat{u}(t,x).
	\end{align*}
	The function $\mathbf{U}=(U,\hat{U})$ then formally satisfies the evolution equation
	\begin{align}\label{wave_map_selfsimilar_variables}
		\begin{cases}
			\partial_\tau \mathbf{U}=\mathbf{L}_0 \mathbf{U}+\mathbf{n}(\mathbf{U})\\
			\mathbf{U}(0)=\mathbf{U}_0,
		\end{cases}
	\end{align}
	where $\mathbf{L}_0 $ is the wave operator in similarity variables, as defined in \eqref{linear_operator_SS}.
	According to \cite[Proposition 4.1]{glogic2025globally}, the operator $\mathbf{L}_0$, originally defined on smooth functions, is closable on $\HH_{rad}^{s,k}$, and its closure generates a strongly continuous semigroup $\mathbf{S}_0(\tau)$ on $\HH_{rad}^{s,k}$. 
    We now decompose the solution in self-similar variables as
    \begin{align*}
        \mathbf{U}=\mathbf{\Phi}+\mathbf{\Psi},
    \end{align*}
     where $\mathbf{\Phi}$ is the static self-similar profile associated with the nonlinear wave map introduced in \eqref{vector_self_similar_blow_up}. By construction, $\mathbf{\Phi}$ generates a blowup solution of \eqref{intro_eq_2} under the self-similar rescaling. The perturbation $\mathbf{\Psi}$ then satisfies
	\begin{align*}
		\begin{cases}
			\partial_\tau \mathbf{\Psi}=\mathbf{L} \mathbf{\Psi}+\mathbf{N}(\mathbf{\Psi})\\
			\mathbf{\Psi}(0)=\mathbf{U}_0-\mathbf{\Phi},
		\end{cases}
	\end{align*}
	where $\mathbf{L}$ and $\mathbf{N}$ are defined in \eqref{linear_perturbed_operators_SS} and \eqref{nonlinear_perturbed_operators_SS}, respectively.
	As shown in \cite[Sections 5–6]{glogic2025globally}, the operator $\mathbf{L}$, like $\mathbf{L}_0$, is closable on $\HH_{rad}^{s,k}$, and its closure generates a strongly continuous semigroup $\mathbf{S}(\tau)$ on $\HH_{rad}^{s,k}$. Most importantly, there exists $\overline{\omega}\in (0, s+1-\frac{n}{2})$ such that 
	\begin{align}\label{spectrum_L}
		\{\lambda \in \sigma(\mathbf{L}):\Re \lambda\geq -\overline{\omega}\}=\{1\}.
	\end{align}
	Moreover, eigenvalue $\la=1$ is simple and it does not correspond to a genuine instability as it arises from the time-translation symmetry of \eqref{SS_sol}. The corresponding eigenfunction can thereby be explicitly computed, and is given by
	\begin{align*}
		\mathbf{g}:=(g,\xi\cdot\nabla g+2g),\quad g(\xi)=\frac{1}{\lvert \xi\rvert^2+n-4}.
	\end{align*}
	Consequently, due to the underlying spectral mapping relation between $\mathbf{S}(\tau)$ and its generator $\mathbf{L}$, we have that
    \begin{align}\label{decay_semigroup}
		\norm{\mathbf{S}(\tau)(I-P)\mathbf{K}}_{\HH_{rad}^{s,k}}\lesssim e^{-\overline{\omega}\tau} \norm{(I-P)\mathbf{K}}_{\HH_{rad}^{s,k}},
	\end{align}
    where $P$ stands for the Riesz projection of $\mathbf{L}$ relative to $\la=1$.
	Finally, according to \cite[Lemma 7.1 and Proposition A.1]{glogic2025globally}, the nonlinearity $\mathbf{N}$ is locally Lipschitz continuous in $\HH_{rad}^{s,k}$, as it satisfies the estimate
	\begin{align}\label{estimate_nonlinearity_selfsimilar}
		\norm{\mathbf{N}(\mathbf{\Psi}_1)-\mathbf{N}(\mathbf{\Psi}_2)}_{\HH_{rad}^{s,k}}&\lesssim \norm{\mathbf{\Psi}_1-\mathbf{\Psi}_2}_{\HH_{rad}^{s,k}}\left(\norm{\mathbf{\Psi}_1}_{\HH_{rad}^{s,k}}+\norm{\mathbf{\Psi}_2}_{\HH_{rad}^{s,k}}\right)\\ & \quad \times\left(\norm{\mathbf{\Phi}}_{\HH_{rad}^{s,k}}+\norm{\mathbf{\Psi}_1}_{\HH_{rad}^{s,k}}+\norm{\mathbf{\Psi}_2}_{\HH_{rad}^{s,k}}\right)\notag\\ &\quad  \times \left(1+\norm{\mathbf{\Phi}}_{\HH_{rad}^{s,k}}^{2k}+\norm{\mathbf{\Psi}_1}_{\HH_{rad}^{s,k}}^{2k}+\norm{\mathbf{\Psi}_2}_{\HH_{rad}^{s,k}}^{2k}\right).\notag
	\end{align}
    for all $\mathbf{\Psi}_1,\mathbf{\Psi}_2 \in \HH_{rad}^{s,k}$.
	\section{Stable self-similar blowup profile for stochastic corotational  wave maps}\label{sec_self_similar_analysis}
	\noindent The main goal of this section is to establish the following perturbative blowup result.
	\begin{theorem}\label{stability_lemma}
		Let $d\geq 3$ and $s,k>0$ such that \begin{align*}
			\frac{d}{2}<s<\frac{d}{2}+\frac{1}{2d},\ k>d+2,\ k\in \N.\end{align*}
		Given \autoref{hp_noise_2}, for each $T>0$ there exists $\eps>0$ small enough such that for each $\mathcal{F}_0$-measurable corotational random variable $\boldsymbol{\mathcal{U}}_0=(\mathcal{U}_0,\hat{\mathcal{U}}_0)\in \HH^{s,k}$ for which
		\begin{align*}
			\mathbb{P}\left(\norm{\boldsymbol{\mathcal{U}}_0-\boldsymbol{\mathcal{U}}_{T}(0,\cdot)}_{\HH^{s,k}}<\eps\right)>0,   
		\end{align*} there exists a set $\mathcal{N} \subseteq \Omega$ of positive probability, such that for all $\omega \in \mathcal{N}$ the unique corotational mild solution $\mathcal{U}$ of \eqref{eq_intro_noise} given by \autoref{corollary_well_posed}, satisfies $0 < \tau^{*}(\boldsymbol{\mathcal{U}}_0,\omega) < 2T$, and, moreover, we have the following decomposition
		\begin{align}\label{representation_formula_calU}
			\mathcal{U}(t,X)1_{\mathcal{N}}&=\frac{X}{\tau^{*}(\boldsymbol{\mathcal{U}}_0,\omega)-t}\Phi\left(\frac{\lvert X\rvert }{\tau^{*}(\boldsymbol{\mathcal{U}}_0,\omega)-t}\right)1_{\mathcal{N}}\notag \\ +&\frac{X}{\tau^{*}(\boldsymbol{\mathcal{U}}_0,\omega)-t}\Psi\left(\log\left(\frac{\tau^{*}(\boldsymbol{\mathcal{U}}_0,\omega)}{\tau^{*}(\boldsymbol{\mathcal{U}}_0,\omega)-t}\right),\frac{\lvert X\rvert}{\tau^{*}(\boldsymbol{\mathcal{U}}_0,\omega)-t}\right)1_{\mathcal{N}} +Xz(t,\lvert X\rvert) 1_{\mathcal{N}}
		\end{align}
		for  $t\in [0,\tau^{*}(\boldsymbol{\mathcal{U}}_0,\omega))$, where $z(t,|X|) $ is the solution at time $t$ of the wave equation \eqref{linear_wave_1} with additive noise $\mathcal{W}$ starting from zero initial data, and $\Psi$ satisfies on $\mathcal{N}$
		\begin{align*}
			\norm{\Psi(\tau)}_{\HH_{rad}^{s,k}}\rightarrow 0\quad \text{as} \quad \tau\rightarrow+\infty. 
		\end{align*}
		In particular, on $\mathcal{N}$ it holds 
		\begin{align*}
			\mathcal{U}(t,(\tau^{*}(\boldsymbol{\mathcal{U}}_0,\omega)-t)\cdot) \rightarrow (\cdot)\Phi(|\cdot|)
		\end{align*}
		locally uniformly on $\mathbb{R}^d$. As a consequence, with positive probability, the solution $\mathcal{U}(t)$ blows up in a self-similar way before $T$.

	\end{theorem}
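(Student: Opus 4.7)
The argument will follow the strategy outlined in \autoref{sec_strategy}. By the equivalence of Sobolev norms for corotational maps and their radial profiles (\cite[Proposition A.5]{glogic2022stable}), it suffices to prove the analogous blowup statement for the radial profile $\mathbf{u}$ solving \eqref{wave_map_system}. The plan is to apply the Da Prato--Debussche decomposition $\mathbf{u}=\mathbf{w}+\mathbf{z}$, where $\mathbf{z}$ is the stochastic convolution of \eqref{Eq:Stoch_lin_wave} and $\mathbf{w}$ solves the pathwise random PDE \eqref{eq_reminder_physical}. For a parameter $\tilde{T}\in(T/2,3T/2)$ to be selected later, one passes to self-similar variables and writes $\mathbf{W}=\mathbf{\Phi}+\mathbf{\Psi}$; then $\mathbf{\Psi}$ satisfies \eqref{stability_system_ss} with initial data $\mathbf{\Psi}_{0,\tilde{T}}$ from \eqref{initial_conditions_self_sim_perturb}.

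The core of the proof is a modulated fixed-point argument in the weighted Banach space
\begin{equation*}
X_\eta := \Bigl\{\mathbf{\Psi}\in C([0,\infty);\HH_{rad}^{s,k}):\ \sup_{\tau\geq 0}e^{\eta\tau}\norm{\mathbf{\Psi}(\tau)}_{\HH_{rad}^{s,k}}<\infty\Bigr\},
\end{equation*}
for a fixed $\eta\in(0,\overline{\omega})$. I would write the Duhamel formula
\begin{equation*}
\mathbf{\Psi}(\tau)=\mathbf{S}(\tau)\mathbf{\Psi}_{0,\tilde{T}} + \int_0^\tau \mathbf{S}(\tau-\sigma)\bigl[\mathbf{N}(\mathbf{\Psi}(\sigma)+\mathbf{Z}(\sigma))+\mathbf{V}\mathbf{Z}(\sigma)\bigr]d\sigma,
\end{equation*}
and split it along the Riesz projection $P$ onto the simple eigenvalue $\la=1$. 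The $(I-P)$-component is controlled by the exponential decay estimate \eqref{decay_semigroup}, while the $P$-component grows like $e^\tau$ and must be killed by the choice of $\tilde{T}$. Since $\la=1$ arises from the time-translation symmetry of \eqref{SS_sol}, the map $\tilde{T}\mapsto P\mathbf{\Psi}_{0,\tilde{T}}$ is locally invertible at $\tilde{T}=T$ whenever $\mathbf{u}_0$ is close to $\mathbf{u}_T(0)$, so a Lyapunov--Perron argument simultaneously selects a unique $\tilde{T}=\tilde{T}(\omega,\mathbf{u}_0)\in(T/2,3T/2)$ and produces a global profile $\mathbf{\Psi}\in X_\eta$. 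The new source terms $\mathbf{V}\mathbf{Z}$ and $\mathbf{N}(\mathbf{\Psi}+\mathbf{Z})-\mathbf{N}(\mathbf{\Psi})$ are controlled by a scaling observation: since $\mathbf{Z}(\tau,\cdot)$ is the self-similar rescaling of $\mathbf{z}(t,\cdot)$ by the factor $\tilde{T}-t=\tilde{T}e^{-\tau}$ and $s>(n-2)/2$, one obtains $\norm{\mathbf{Z}(\tau)}_{\HH_{rad}^{s,k}}\lesssim e^{-\kappa\tau}\norm{\mathbf{z}}_{C([0,\tilde{T}];\HH_{rad}^{s,k})}$ for some $\kappa>0$; combined with \eqref{estimate_nonlinearity_selfsimilar}, these are small exponentially decaying perturbations, and the contraction in $X_\eta$ goes through for $\eps$ and $\norm{\mathbf{z}}$ sufficiently small.

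To ensure the exceptional set has positive probability, define
\begin{equation*}
\mathcal{N}:=\bigl\{\norm{\boldsymbol{\mathcal{U}}_0-\boldsymbol{\mathcal{U}}_T(0,\cdot)}_{\HH^{s,k}}<\eps\bigr\}\cap\bigl\{\norm{\mathbf{z}}_{C([0,2T];\HH_{rad}^{s,k})}<\delta\bigr\},
\end{equation*}
for $\delta>0$ chosen small enough to run the fixed point. The first event has positive probability by hypothesis and is $\mathcal{F}_0$-measurable, hence independent of $\mathbf{z}$; the second event has positive probability by \autoref{lemma_support_z} applied on $[0,2T]$, as the zero path lies in $\mathcal{E}^k_{0,2T}$. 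Thus $\mathbb{P}(\mathcal{N})>0$, and on $\mathcal{N}$ undoing the similarity change of variables together with $\mathbf{u}=\mathbf{w}+\mathbf{z}$ yields the representation \eqref{representation_formula_calU} up to physical time $\tilde{T}\in(T/2,3T/2)$, with $\norm{\mathbf{\Psi}(\tau)}_{\HH_{rad}^{s,k}}\to 0$ as $\tau\to+\infty$.

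The main subtlety I anticipate is the measurability of the construction: $\tilde{T}$ is a functional of the entire noise path on $[0,2T]$, so the candidate above is a priori not $(\mathcal{F}_t)$-adapted in the standard sense. As indicated in \autoref{sec_strategy}, I would resolve this by substituting the candidate back into the fixed-point equation \eqref{fixed_point_problem_v} and invoking the uniqueness of mild solutions from \autoref{Thm_well_posed} to identify it, on $[0,\tilde{T}(\omega))$, with the canonical progressively measurable mild solution. Combined with the blowup alternative of \autoref{Thm_well_posed}, this forces $\tau^*(\boldsymbol{\mathcal{U}}_0,\omega)=\tilde{T}(\omega)$ on $\mathcal{N}$, simultaneously restoring adaptedness and identifying the blowup time in \eqref{representation_formula_calU}.
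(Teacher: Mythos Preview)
Your proposal is correct and follows essentially the same route as the paper: Da Prato--Debussche splitting, passage to similarity variables, a Lyapunov--Perron fixed point in the exponentially weighted space (the paper's $\mathcal{X}$ is your $X_\eta$ with $\eta=\overline{\omega}$), selection of $\tilde{T}$ to kill the unstable $P$-direction, positivity of $\mathbb{P}(\mathcal{N})$ via the support lemma and $\mathcal{F}_0$-independence of $\mathbf{z}$, and finally identification with the canonical mild solution to recover adaptedness and force $\tau^*=\tilde{T}$. Two small points where the paper differs in execution: (i) rather than selecting $\tilde{T}$ ``simultaneously'', the paper first solves, for each fixed $\tilde{T}$, a corrected problem (subtracting an explicit $C(\mathbf{v},\mathbf{\Psi},\mathbf{Z})\in\mathrm{ran}\,P$) and then uses a Brouwer fixed-point argument in $\tilde{T}$ to make the correction vanish---so existence, not uniqueness, of $\tilde{T}$ is what is actually shown; (ii) the paper controls $\mathbf{V}\mathbf{Z}$ using the stronger norm $\|\mathbf{z}\|_{C([0,2T];\mathbf{H}^{k+(n+1)/2})}$ (available under \autoref{hp_noise_2}) rather than just $\HH_{rad}^{s,k}$, which yields a clean $e^{-\tau}$ decay and is also the topology in which \autoref{lemma_support_z} is stated.
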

	\subsection{Change of Variables}\label{sec_change of variables}
	Let $\mathbf{z}$ be the stochastic convolution starting from $0$, as defined in \autoref{prel_physical}, and let $\mathbf{\Phi}$ be the background self-similar profiles defined in \eqref{vector_self_similar_blow_up}. As discussed in \autoref{sec_strategy}, for each $0<\tilde{T}\leq 2T$, we introduce the change of variables 
	\begin{align*}
		\tau=\log\left(\frac{\tilde{T}}{\tilde{T}-t}\right),\ \xi=\frac{x}{\tilde{T}-t},
	\end{align*}
	and define 
	\begin{align}\label{change_of_variables}
		\phi(x,t)&:=\frac{1}{\tilde{T}-t}\Phi\left(\frac{\lvert x\rvert}{\tilde{T}-t}\right)=\frac{2}{\lvert x\rvert}\operatorname{arctan}\left(\frac{\lvert x\rvert}{\sqrt{n-4}(\tilde{T}-t)}\right),\notag\\
		\hat{\phi}(x,t)&:=\partial_t \phi(x,t)=\frac{1}{(\tilde{T}-t)^2}\hat{\Phi}\left(\frac{\lvert x\rvert}{\tilde{T}-t}\right)\notag\\
		Z(\tau, \xi)&:=(\tilde{T}-t)z(t,x),\notag\\
		\hat{Z}(\tau, \xi)&:=(\tilde{T}-t)^2\hat{z}(t,x),\notag\\
		\mathbf{Z}(\tau,\xi)&:=(Z(\tau, \xi),\hat{Z}(\tau, \xi)),
	\end{align}    
	Writing $u = \phi + z + \psi$ with $\hat{\psi} = \partial_t \psi$, we define
	\begin{align*}
		\Psi(\tau,\xi):=(\tilde{T}-t)\psi(t,x),\quad \hat{\Psi}(\tau,\xi):=(\tilde{T}-t)^2\hat{\psi},\quad (t,x), \mathbf{\Psi}=(\Psi,\hat{\Psi}).
	\end{align*}
	Then $\mathbf{\Psi}$ satisfies \eqref{stability_system_ss} with initial data \eqref{initial_conditions_self_sim_perturb}.
	Thus, to prove \autoref{stability_lemma}, it suffices to study the system \eqref{stability_system_ss} pathwise. The existence of a positive-probability set follows from \autoref{lemma_support_z}, which ensures that $\mathbf{z}$ has the required properties, together with the fact that $\mathbf{z}$ is independent of $\mathcal{F}_0$.
    
	\subsubsection{Analysis of the perturbed system}
	Let us define 
	\begin{align*}
		\mathcal{X}:=\left\{f\in C([0,+\infty);\HH_{rad}^{s,k}):\norm{\phi}_{\mathcal{X}}=\sup_{t\in [0,+\infty)}e^{\overline{\omega} t}\norm{\phi(t)}_{\HH_{rad}^{s,k}}\right\},
	\end{align*}
    where $\overline{\omega}$ is from the previous section; see \eqref{spectrum_L}.
    %
    %the maximal stable eigenvalue of $\mathbf{L}$, i.e. the largest real part among all eigenvalues of $\mathbf{L}$ except for the unstable eigenvalue $1$ (see \autoref{prel_self_similar} for details).
    %
    Let $\mathcal{X}_{r}$ (resp $B_{r},\ C_{T}B_{r}$) denote the closed ball of radius $r$ in $\mathcal{X}$ (resp. $\HH_{rad}^{s,k},\ C([0,2T];\mathbf{H}^{k+\frac{n+1}{2}})$) and $P$ be the Riesz projection of $\mathbf{L}$ associated to the eigenvalue $1$.
	We study \eqref{stability_system_ss} for a generic initial condition $\mathbf{v}\in \HH_{rad}^{s,k}$ using a Lyapunov–Perron type argument.
	For $\mathbf{v}\in \HH_{rad}^{s,k},\ \mathbf{K}\in \mathcal{X},\ \mathbf{Z}\in C([0,+\infty);\HH_{rad}^{s,k}),\ \mathbf{Z}=(Z,\hat{Z})$, define
	\begin{align*}
		C(\mathbf{v},\mathbf{K},\mathbf{Z}):=P\left(\mathbf{v}+\int_0^{+\infty} e^{-s}\left(\mathbf{N}(\mathbf{K}+\mathbf{Z})+\mathbf{V}\mathbf{Z}\right) ds\right).
	\end{align*}
	We look for fixed points of the map 
	\begin{align*}
		K_{\mathbf{v},\mathbf{z}}(\mathbf{\Psi})(\tau)&:=\mathbf{S}(\tau)(\mathbf{v}- C(\mathbf{v},\mathbf{\Psi},\mathbf{Z}))+\int_0^\tau \mathbf{S}(\tau-s)\left(\mathbf{N}(\Psi+\mathbf{Z})+\mathbf{V}\mathbf{Z}\right) ds,
	\end{align*}
	where $\mathbf{Z}$ is obtained from $\mathbf{z}$ via the change of variables \eqref{change_of_variables}.
	This is the content of the following lemma.
	\begin{lemma}\label{well-posed_system_stabilized}
		For all sufficiently small $\delta>0$ and all sufficiently large $\mathcal{C}>0$, the
		following holds. If $\mathbf{v}\in B_{{\delta}/{\mathcal{C}}}$ and $\mathbf{z}\in C_{T}B_{\delta/\mathcal{C}}$, then there exits a unique $\mathbf{\Psi}  \in \mathcal{X}_{\delta}$ such that
        \begin{align*}
            \mathbf{\Psi}=K_{\mathbf{v},\mathbf{z}}(\mathbf{\Psi}).
        \end{align*}
		Moreover, the map $(\mathbf{v},\mathbf{z})\rightarrow \mathbf{\Psi}$ is Lipschitz continuous from $B_{\delta/\mathcal{C}}\times C_{T}B_{\delta/\mathcal{C}}$ into $\mathcal{X}_{\delta}$.
	\end{lemma}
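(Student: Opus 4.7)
The plan is to apply the Banach fixed-point theorem to $K_{\mathbf{v},\mathbf{z}}$ on the closed ball $\mathcal{X}_{\delta}$. The structural insight making the argument work is that the subtraction of $C(\mathbf{v},\mathbf{\Psi},\mathbf{Z})$ in the initial datum of the first term is designed to annihilate the one-dimensional unstable mode associated with the simple eigenvalue $\lambda=1$ of $\mathbf{L}$ (cf.~\eqref{spectrum_L}); equivalently, it converts the forward-in-time growth $\mathbf{S}(\tau)P=e^{\tau}P$ into a convergent backward tail integral. Concretely, since $P$ commutes with $\mathbf{S}(\tau)$, $P^{2}=P$, and $P\,C(\mathbf{v},\mathbf{\Psi},\mathbf{Z})=C(\mathbf{v},\mathbf{\Psi},\mathbf{Z})$, a direct computation yields
\begin{equation*}
P\,K_{\mathbf{v},\mathbf{z}}(\mathbf{\Psi})(\tau)=-\,e^{\tau}\!\int_{\tau}^{+\infty}e^{-s}\,P\!\left(\mathbf{N}(\mathbf{\Psi}+\mathbf{Z})+\mathbf{V}\mathbf{Z}\right)\!(s)\,ds,
\end{equation*}
while $(I-P)K_{\mathbf{v},\mathbf{z}}(\mathbf{\Psi})$ is estimated by the usual Duhamel formula combined with the decay bound \eqref{decay_semigroup}, noting that $(I-P)(\mathbf{v}-C(\mathbf{v},\mathbf{\Psi},\mathbf{Z}))=(I-P)\mathbf{v}$ since $C\in\mathrm{Range}(P)$.

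The second step is to quantify the two forcings in $\mathcal{X}$-norm. Writing $\mathbf{Z}$ through the change of variables \eqref{change_of_variables} and invoking the homogeneous Sobolev scaling $\|f(\lambda\cdot)\|_{\dot H^{r}(\R^{n})}=\lambda^{r-n/2}\|f\|_{\dot H^{r}(\R^{n})}$, one obtains
\begin{equation*}
\|\mathbf{Z}(\tau)\|_{\HH_{rad}^{s,k}}\;\lesssim_{T}\;e^{-(s+1-n/2)\tau}\,\|\mathbf{z}\|_{C([0,2T];\mathbf{H}^{k+(n+1)/2})}.
\end{equation*}
Since $\overline{\omega}<s+1-n/2$ by \eqref{spectrum_L}--\eqref{Cond_s_k}, this places $\mathbf{Z}$ in $\mathcal{X}$ with norm $\lesssim\delta/\mathcal{C}$; as $\mathbf{V}$ is bounded on $\HH_{rad}^{s,k}$, the same holds for $\mathbf{V}\mathbf{Z}$. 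Moreover, the Lipschitz estimate \eqref{estimate_nonlinearity_selfsimilar} with $\mathbf{N}(0)=0$ gives the quadratic bound
\begin{equation*}
\|\mathbf{N}(\mathbf{\Psi}+\mathbf{Z})(s)\|_{\HH_{rad}^{s,k}}\lesssim\bigl(\|\mathbf{\Psi}(s)\|_{\HH_{rad}^{s,k}}+\|\mathbf{Z}(s)\|_{\HH_{rad}^{s,k}}\bigr)^{2}\lesssim\delta^{2}e^{-2\overline{\omega}s}
\end{equation*}
for $\mathbf{\Psi}\in\mathcal{X}_{\delta}$.

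Inserting these bounds into both the projected and stable components, multiplying by $e^{\overline{\omega}\tau}$, and taking the supremum in $\tau$ leads to
\begin{equation*}
\|K_{\mathbf{v},\mathbf{z}}(\mathbf{\Psi})\|_{\mathcal{X}}\;\le\;c_{1}\,\tfrac{\delta}{\mathcal{C}}+c_{2}\,\delta^{2},
\end{equation*}
so choosing first $\mathcal{C}$ sufficiently large and then $\delta$ sufficiently small yields $K_{\mathbf{v},\mathbf{z}}(\mathcal{X}_{\delta})\subset\mathcal{X}_{\delta}$. For the contraction, the Lipschitz form of \eqref{estimate_nonlinearity_selfsimilar} gives $\|\mathbf{N}(\mathbf{\Psi}_{1}+\mathbf{Z})-\mathbf{N}(\mathbf{\Psi}_{2}+\mathbf{Z})\|_{\HH_{rad}^{s,k}}\lesssim\delta\,e^{-2\overline{\omega}s}\|\mathbf{\Psi}_{1}-\mathbf{\Psi}_{2}\|_{\mathcal{X}}$; because $C$ depends on $\mathbf{\Psi}$ only through the exponentially-weighted integral of $\mathbf{N}(\mathbf{\Psi}+\mathbf{Z})$, the same $O(\delta)$ factor propagates through the full map, producing a contraction for $\delta$ small. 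Banach's theorem then produces the unique fixed point $\mathbf{\Psi}\in\mathcal{X}_{\delta}$. Lipschitz dependence is obtained by the standard telescoping $\mathbf{\Psi}_{1}-\mathbf{\Psi}_{2}=\bigl[K_{\mathbf{v}_{1},\mathbf{z}_{1}}(\mathbf{\Psi}_{1})-K_{\mathbf{v}_{1},\mathbf{z}_{1}}(\mathbf{\Psi}_{2})\bigr]+\bigl[K_{\mathbf{v}_{1},\mathbf{z}_{1}}(\mathbf{\Psi}_{2})-K_{\mathbf{v}_{2},\mathbf{z}_{2}}(\mathbf{\Psi}_{2})\bigr]$: the first bracket is absorbed by the contraction constant, and the second is linear in $\mathbf{v}_{1}-\mathbf{v}_{2}$ and $\mathbf{z}_{1}-\mathbf{z}_{2}$ by the same estimates on the forcing.

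The main obstacle is the control of $\mathbf{Z}$ and $\mathbf{V}\mathbf{Z}$ in $\mathcal{X}$-norm. A priori the stochastic convolution lives only in $C([0,2T];\mathbf{H}^{k+(n+1)/2})$ with no temporal decay, so the entire weight $e^{-\overline{\omega}\tau}$ carried by $\mathcal{X}$ must be produced from the spatial rescaling $x\mapsto x/(\tilde{T}-t)$ alone. This forces the scaling exponent $s+1-n/2$ to dominate $\overline{\omega}$, which is exactly guaranteed by the constraints \eqref{Cond_s_k} together with $\overline{\omega}\in(0,s+1-n/2)$, and explains why the higher-regularity norm $\mathbf{H}^{k+(n+1)/2}$ is imposed on $\mathbf{z}$ in the hypothesis of the lemma.
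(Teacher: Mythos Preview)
Your argument is correct and follows the same Lyapunov--Perron/Banach fixed-point strategy as the paper: decompose via $P$ and $I-P$, convert the $P$-component into a backward tail integral, and close in the weighted space $\mathcal{X}$ using the nonlinear estimate \eqref{estimate_nonlinearity_selfsimilar} and the semigroup decay \eqref{decay_semigroup}. The one substantive difference is in the treatment of $\mathbf{V}\mathbf{Z}$. You invoke boundedness of $\mathbf{V}$ on $\HH_{rad}^{s,k}$ and compose with the scaling bound for $\mathbf{Z}$ to obtain decay $e^{-(s+1-n/2)\tau}$; the paper instead carries out an explicit Leibniz-rule/H\"older computation directly on the product $(|\xi|^{2}+n-4)^{-2}Z(\tau,\xi)$, obtaining the sharper rate $e^{-\tau}$ (estimate \eqref{estimate_linear_stochastic}) at the price of the higher norm $H^{k+(n+1)/2}$ on $z$. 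Both rates suffice for the fixed point since $\overline{\omega}<\hat{\omega}<1$. Your route is shorter provided you import the boundedness of $\mathbf{V}$ from \cite{glogic2025globally}; the paper's route is self-contained and is what actually forces the stronger regularity on $\mathbf{z}$ in the hypothesis---so your closing paragraph slightly misattributes the reason, since in your own argument the $\HH_{rad}^{s,k}$ norm of $\mathbf{z}$ (via \eqref{estimate_zeta}) would already be enough.
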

	\begin{proof}
		We start with the following observation on $\mathbf{V}\mathbf{Z}$. For each $l\in \N,\ l\geq \frac{n}{2}-3$ and for a multi-index $\alpha\in\mathbb{N}^n$ of degree $\lvert \alpha\rvert=l$, the Leibniz rule gives
		\begin{multline*}
			\partial^{\alpha}\left(\frac{1}{(\lvert\xi\rvert^2+n-4)^2}Z(\tau,\xi)\right)=\sum_{\substack{\beta\in \mathbb{N}^n:\\ \beta\leq \alpha}} \binom{\alpha}{\beta}\partial^{\beta}\left(\frac{1}{(\lvert\xi\rvert^2+n-4)^2}\right)\partial^{\alpha-\beta}Z(\tau,\xi)\\
			=\sum_{i=0}^l \sum_{\substack{\beta\in \mathbb{N}^n:\\ \lvert \beta\rvert=i}}\tilde{T}^{1+l-i} e^{-(1+l-i)\tau}\binom{\alpha}{\beta} \partial^{\beta}\left(\frac{1}{(\lvert\xi\rvert^2+n-4)^2}\right)\partial^{\alpha-\beta} z (\tilde{T}(1-e^{-\tau}),\xi \tilde{T} e^{-\tau}).
		\end{multline*}
        We now estimate the $L^2$ norm of each term. For a given $i=\lvert \beta\rvert$, we have two cases.
		If $i\leq  \frac{n-8}{2}$, choosing $\eps <\frac{1}{4}$, by H\"older's inequality with $p=\frac{n+\eps}{8+2i}$, we have
		\begin{multline*}
			\left(\int_{\R^n}\left\lvert\partial^{\beta}\left(\frac{1}{(\lvert\xi\rvert^2+n-4)^2}\right) \partial^{\alpha-\beta}z (\tilde{T}(1-e^{-\tau}),\xi \tilde{T} e^{-\tau})\right\rvert^2 d\xi\right)^{1/2}\\ \lesssim \left(\int_{\R^n}\left\lvert\nabla^{l-i}z (\tilde{T}(1-e^{-\tau}),\xi \tilde{T} e^{-\tau})\right\rvert^\frac{2(n+\eps)}{n+\eps-8-2i} d\xi\right)^{\frac{n+\eps-8-2i}{2(n+\eps)}}\\  =\frac{\norm{\nabla^{l-i}z(\tilde{T}(1-e^{-\tau}))}_{L^{\frac{2(n+\eps)}{n+\eps-8-2i}}}}{(\tilde{T} e^{-\tau})^{\frac{n(n+\eps-8-2i)}{2(n+\eps)}}}  \lesssim \frac{\norm{z(\tilde{T}(1-e^{-\tau}))}_{H^l\cap W^{l,\infty}}}{(\tilde{T} e^{-\tau})^{\frac{n(n+\eps-8-2i)}{2(n+\eps)}}}.
		\end{multline*}
		On the other hand, if $i> \frac{n-8}{2}$, we have
		\begin{align*}
			\left(\int_{\R^n}\left\lvert\partial^{\beta}\left(\frac{1}{(\lvert\xi\rvert^2+n-4)^2}\right)\partial^{\alpha-\beta}z (\tilde{T}(1-e^{-\tau}),\xi \tilde{T} e^{-\tau})\right\rvert^2 d\xi\right)^{1/2}&\lesssim \norm{\nabla^{l-i}z(\tilde{T}(1-e^{-\tau}))}_{L^\infty}\\ &\leq \norm{z(\tilde{T}(1-e^{-\tau}))}_{H^l\cap W^{l,\infty}}.
		\end{align*}
		Therefore
		\begin{align*}
			\norm{ \nabla^l\left(\frac{1}{(\lvert\xi\rvert^2+n-4)^2}Z(\tau,\xi)\right)}&\lesssim \tilde{T} e^{-\tau}\norm{z(\tilde{T}(1-e^{-\tau}))}_{H^l\cap W^{l,\infty}}\left(\max_{i=0,\dots \lfloor{\frac{n-8}{2}}\rfloor }(\tilde{T} e^{-\tau})^{l-i-\frac{n(n+\eps-8-2i)}{2(n+\eps)}} +1\right)\\ & \lesssim e^{-\tau}\norm{z(\tilde{T}(1-e^{-\tau}))}_{H^l\cap W^{l,\infty}} \left(1+e^{-\tau(l-\frac{n}{2}+4-\frac{\eps n}{2(n+\eps)})}\right)\\ & \lesssim  e^{-\tau}\norm{z(\tilde{T}(1-e^{-\tau}))}_{H^l\cap W^{l,\infty}} ,
		\end{align*}
		due to our choice of $\eps$. Consequently, by interpolation and since $k>s>\frac{n}{2}-1$ and $\overline{\omega}<s+1-\frac{n}{2}<1$, we obtain
		\begin{align}\label{estimate_linear_stochastic}
			\norm{\mathbf{V}\mathbf{Z}(\tau)}_{\HH_{rad}^{s,k}}=\norm{\frac{8(n-4)(n-3)}{(\lvert \cdot\rvert^2+n-4)^2}Z(\tau)}_{\dot{H}^{s-1}_{rad}\cap \dot{H}^{k-1}_{rad}}&\lesssim e^{-\tau}\sup_{t\in [0,2T]}\norm{ z(t)}_{H^{k+\frac{n+1}{2}}_{rad}}.
		\end{align}
		Similarly, and more simply, 
		\begin{align}\label{estimate_zeta}
			\norm{\mathbf{Z}(\tau)}_{\HH_{rad}^{s,k}}&\lesssim e^{-\hat{\omega}\tau}\sup_{t\in [0,2T]}\norm{\mathbf{z}(t)}_{\HH_{rad}^{s,k}},
		\end{align}
		where $\hat{\omega}=s+1-\frac{n}{2}<1$. Note that, by our assumptions, $\overline{\omega}<\hat{\omega}<1$.
		Having these relations in mind, we can run the fixed point argument. First, we need to show that $\mathbf{K}_{\mathbf{v},\mathbf{z}}$ maps $\mathcal{X}_{\delta}$ into itself. In view of \eqref{decay_semigroup}, we rewrite it as
		\begin{align*}
			\mathbf{K}_{\mathbf{v},\mathbf{z}}(\mathbf{\Psi})(\tau)=\ &\mathbf{S}(\tau)(I-P)\mathbf{v}+\int_0^\tau \mathbf{S}(\tau-s)(I-P)\left(\mathbf{N}(\mathbf{\Psi}+\mathbf{Z})+\mathbf{V}\mathbf{Z} \right) ds  \\ & -\int_{\tau}^{+\infty} e^{\tau-s} P\left(\mathbf{N}(\mathbf{\Psi}+\mathbf{Z})+\mathbf{V}\mathbf{Z} \right) ds.
		\end{align*}
		Thanks to our assumptions and estimates  \eqref{decay_semigroup},  \eqref{estimate_nonlinearity_selfsimilar}, \eqref{estimate_linear_stochastic} and \eqref{estimate_zeta}, we have
		\begin{align*}
			\norm{\mathbf{K}_{\mathbf{v},\mathbf{z}}(\mathbf{\Psi})(\tau)}_{\HH_{rad}^{s,k}}\lesssim \ & e^{-\overline{\omega} \tau}\frac{\delta}{\mathcal{C}}+\int_0^\tau e^{-\overline{\omega}(\tau-s)} \left(e^{-2\overline{\omega} s}\delta^2+e^{-2\hat{\omega} s}\frac{\delta^2}{\mathcal{C}^2}+e^{- s}\frac{\delta}{\mathcal{C}}\right)ds\\ & +\int_{\tau}^{+\infty} e^{\tau-s}\left(e^{-2\overline{\omega} s}\delta^2+e^{-2\hat{\omega} s}\frac{\delta^2}{\mathcal{C}^2}+e^{- s}\frac{\delta}{\mathcal{C}}\right)ds\\  \lesssim \ & e^{-\overline{\omega} \tau}\left(\delta^2+\frac{\delta}{\mathcal{C}}\right),
		\end{align*}
		since $\overline{\omega}<\hat{\omega}<1$.
		The last estimate implies the claim provided $\delta$ is sufficiently small and $\mathcal{C}$ is sufficiently large. Secondly, we need to show that $\mathbf{K}_{\mathbf{v},\mathbf{z}}$ is a contraction. This follows from computations analogous to those above. Indeed, using \eqref{estimate_zeta}, \eqref{decay_semigroup}, and \eqref{estimate_nonlinearity_selfsimilar}, we have
		\begin{align*}
			\| \mathbf{K}_{\mathbf{v},\mathbf{z}}(\mathbf{\Psi}_1)&(\tau)-\mathbf{K}_{\mathbf{v},\mathbf{z}}(\mathbf{\Psi}_2)(\tau)\|_{\HH_{rad}^{s,k}}\\  \lesssim \ & \int_0^{\tau} e^{-\overline{\omega}(\tau-s)}\left(\norm{\mathbf{Z}(s)}_{\HH_{rad}^{s,k}}+\norm{\mathbf{\Psi}_1(s)}_{\HH_{rad}^{s,k}}+\norm{\mathbf{\Psi}_2(s)}_{\HH_{rad}^{s,k}}\right)\norm{\mathbf{\Psi}_1(s)-\mathbf{\Psi}_2(s)}_{\HH_{rad}^{s,k}}ds\\ & +\int_{\tau}^{+\infty} e^{\tau-s} \left(\norm{\mathbf{Z}(s)}_{\HH_{rad}^{s,k}}+\norm{\mathbf{\Psi}_1(s)}_{\HH_{rad}^{s,k}}+\norm{\mathbf{\Psi}_2(s)}_{\HH_{rad}^{s,k}}\right)\norm{\mathbf{\Psi}_1(s)-\mathbf{\Psi}_2(s)}_{\HH_{rad}^{s,k}} ds\\  \lesssim \ &e^{-\overline{\omega} \tau}(\delta+\frac{\delta}{\mathcal{C}})\norm{\mathbf{\Psi}_1-\mathbf{\Psi}_2}_{\mathcal{X}}.
		\end{align*}
		Hence by choosing $\delta$ sufficiently small and $\mathcal{C}$ sufficiently large, we obtain the contraction estimate
		\begin{align}\label{contraction_estimate}
			\norm{\mathbf{K}_{\mathbf{v},\mathbf{z}}(\mathbf{\Psi}_1)-\mathbf{K}_{\mathbf{v},\mathbf{z}}(\mathbf{\Psi}_2)}_{\mathcal{X}}\leq \frac{1}{4}   \norm{\mathbf{\Psi}_1-\mathbf{\Psi}_2}_{\mathcal{X}}.
		\end{align}
		It remains to show the continuity of the solution map. Let $\mathbf{\Psi}_1$ corresponds to $\mathbf{v}_1,\mathbf{z}_1$ and $\mathbf{\Psi}_2$ corresponds to $\mathbf{v}_2,\mathbf{z}_2$. Using the triangle inequality, \eqref{decay_semigroup}, and \eqref{estimate_nonlinearity_selfsimilar}, we get
		\begin{align*}
			\|\mathbf{\Psi}_1(\tau)&-\mathbf{\Psi}_2(\tau)\|_{\HH_{rad}^{s,k}}= \norm{\mathbf{K}_{\mathbf{v}_1,\mathbf{z}_1}(\mathbf{\Psi}_1)(\tau)-\mathbf{K}_{\mathbf{v}_2,\mathbf{z}_2}(\mathbf{\Psi}_2)(\tau)}_{\HH_{rad}^{s,k}}\\ \leq \ & \norm{\mathbf{K}_{\mathbf{v}_1,\mathbf{z}_1}(\mathbf{\Psi}_1)(\tau)-\mathbf{K}_{\mathbf{v}_2,\mathbf{z}_1}(\mathbf{\Psi}_1)(\tau)}_{\HH_{rad}^{s,k}}\\ & +\norm{\mathbf{K}_{\mathbf{v}_2,\mathbf{z}_2}(\mathbf{\Psi}_1)(\tau)-\mathbf{K}_{\mathbf{v}_2,\mathbf{z}_1}(\mathbf{\Psi}_1)(\tau)}_{\HH_{rad}^{s,k}}\\ &+\norm{\mathbf{K}_{\mathbf{v}_2,\mathbf{z}_2}(\mathbf{\Psi}_2)(\tau)-\mathbf{K}_{\mathbf{v}_2,\mathbf{z}_2}(\mathbf{\Psi}_1)(\tau)}_{\HH_{rad}^{s,k}}\\  \lesssim \ & e^{-\overline{\omega} \tau}\norm{\mathbf{v}_1-\mathbf{v}_2}_{\HH_{rad}^{s,k}}\\ &+ e^{-\overline{\omega} \tau}\left(\norm{\mathbf{z}_1}_{C([0,2T];\mathbf{H}^{k+\frac{n+1}{2}})}+\norm{\mathbf{z}_2}_{C([0,2T];\mathbf{H}^{k+\frac{n+1}{2}})}+\delta\right) \norm{\mathbf{z}_1-\mathbf{z}_2}_{C([0,2T];\mathbf{H}^{k+\frac{n+1}{2}})}\\ & +e^{-\overline{\omega} \tau} \norm{\mathbf{z}_1-\mathbf{z}_2}_{C([0,2T];\mathbf{H}^{k+\frac{n+1}{2}})}\\ &+\norm{\mathbf{K}_{\mathbf{v}_2,\mathbf{z}_2}(\mathbf{\Psi}_2)(\tau)-\mathbf{K}_{\mathbf{v}_2,\mathbf{z}_2}(\mathbf{\Psi}_1)(\tau)}_{\HH_{rad}^{s,k}}.
		\end{align*}
		Therefore, applying \eqref{contraction_estimate}, we deduce
		\begin{align*}
			\norm{\mathbf{\Psi}_1(\tau)-\mathbf{\Psi}_2(\tau)}_{\mathcal{X}}\lesssim \ & \norm{\mathbf{v}_1-\mathbf{v}_2}_{\HH_{rad}^{s,k}}\\ &+ \left(\norm{\mathbf{z}_1}_{C([0,2T];\mathbf{H}^{k+\frac{n+1}{2}})}+\norm{\mathbf{z}_2}_{C([0,2T];\mathbf{H}^{k+\frac{n+1}{2}})}+\delta\right) \norm{\mathbf{z}_1-\mathbf{z}_2}_{C([0,2T];\mathbf{H}^{k+\frac{n+1}{2}})}\\ & +\norm{\mathbf{z}_1-\mathbf{z}_2}_{C([0,2T];\mathbf{H}^{k+\frac{n+1}{2}})}
		\end{align*}
		and the claim follows.
	\end{proof}
    Note that, due to our construction, the initial perturbation $\mathbf{\Psi}_{0,\tilde{T}}$
  and the stochastic term $\mathbf{Z}$ both depend on $\tilde{T}$
 . Explicitly, we have
	\begin{align}
\mathbf{\Psi}_{0,\tilde{T}}:=U(\tilde{T}, u_0, \hat{u}_0)&=\begin{bmatrix}
			\tilde{T} u_0(\tilde{T} \cdot)-{T} u_0({T} \cdot)\\
			\tilde{T}^2 \hat{u}_0(\tilde{T} \cdot)-{T}^2 \hat{u}_0({T} \cdot)
		\end{bmatrix}+\begin{bmatrix}
			{T} u_0({T} \cdot)-\Phi\\
			{T}^2 \hat{u}_0({T} \cdot)-\hat{\Phi}
		\end{bmatrix}\label{initial_cond_form_1}\\ & =\begin{bmatrix}
			\frac{\tilde{T}}{T} \Psi_{0,T}(\frac{\tilde{T}}{T} \cdot)\\
			\frac{\tilde{T}^2}{T^2} \hat{\Psi}_{0,T}(\frac{\tilde{T}}{T} \cdot)
		\end{bmatrix}+\begin{bmatrix}
			\frac{\tilde{T}}{T} \Phi(\frac{\tilde{T}}{T} \cdot)-\Phi\\
			\frac{\tilde{T}^2}{T^2} \hat{\Phi}(\frac{\tilde{T}}{T} \cdot)-\hat{\Phi}
		\end{bmatrix}.\label{initial_cond_form_2}
	\end{align}
    Similarly, the stochastic term in self-similar coordinates is
    \begin{align*}
		\mathbf{Z}(\tau,\xi)&:=\mathbf{Z}_{\tilde{T}}(\tau, \xi)=(Z_{\tilde{T}},\hat{Z}_{\tilde{T}}), \text{ where }\\ \
		Z(\tau, \xi)&:=Z_{\tilde{T}}(\tau, \xi)=\tilde{T} e^{-\tau}z(\tilde{T}(1-e^{-\tau}), \xi \tilde{T} e^{-\tau}),\\
		\hat{Z}(\tau, \xi)&:=\hat{Z}_{\tilde{T}}(\tau, \xi)=\tilde{T}^2 e^{-2\tau}\hat{z}(\tilde{T}(1-e^{-\tau}), \xi \tilde{T} e^{-\tau}).
	\end{align*}
	Both expressions \eqref{initial_cond_form_1} and \eqref{initial_cond_form_2} will be used in the following analysis.
	
	The result of \autoref{well-posed_system_stabilized} applies to our initial condition $\mathbf{\Psi}_{0,\tilde{T}}$, provided that $\tilde{T}$ is sufficiently close to $T$ and that the perturbation of the initial data is small in the sense that $\norm{\mathbf{u}_0-\frac{1}{T}\mathbf{\Phi}\left(\frac{\cdot}{T}\right)}_{\HH_{rad}^{s,k}}$ is sufficiently small.
    The following lemma makes this statement precise and quantifies the dependence of $\mathbf{\Psi}_{0,\tilde{T}}$ on $\tilde{T}.$
    \begin{lemma}\label{estimate_initial_cond}
		For each $\delta\in (0,\frac{T}{2}]$ and $(u_0,\hat{u}_0)\in \HH_{rad}^{s,k}$ the map \begin{align*}
			\tilde{T}\mapsto U(\tilde{T}, u_0, \hat{u}_0): [T-\delta,T+\delta]\rightarrow \HH_{rad}^{s,k}
		\end{align*}
		is continuous. Moreover,
		\begin{align*}
			\lVert U(\tilde{T}, u_0, \hat{u}_0)\rVert_{\HH_{rad}^{s,k}}\lesssim \lvert \tilde{T}-T\rvert+\norm{\mathbf{u}_0-\frac{1}{T}\mathbf{\Phi}\left(\frac{\cdot}{T}\right)}_{\HH_{rad}^{s,k}}\quad \text{for all} \quad \tilde{T}\in \left[\frac{1}{2}T,\frac{3}{2}T\right].
		\end{align*}
	\end{lemma}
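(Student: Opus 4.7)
My plan is to use the decomposition \eqref{initial_cond_form_2}, which isolates the dependence on $\tilde T$ into a dilation of the fixed object $\mathbf{\Psi}_{0,T}$ plus an explicit correction involving only $\mathbf{\Phi}$. Introducing the dilation operator $D_\lambda(f,\hat f) := (\lambda f(\lambda\cdot),\,\lambda^2\hat f(\lambda\cdot))$, the identity takes the form $\mathbf{\Psi}_{0,\tilde T} = D_{\tilde T/T}\mathbf{\Psi}_{0,T} + R(\tilde T/T)$, where $R(\lambda) := (\lambda\Phi(\lambda\cdot) - \Phi,\,\lambda^2\hat\Phi(\lambda\cdot) - \hat\Phi)$. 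Both assertions follow by analyzing the two pieces separately.

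For the norm bound, the standard Sobolev scaling $\|f(\lambda\cdot)\|_{\dot H^r} = \lambda^{r-n/2}\|f\|_{\dot H^r}$ shows that $D_\lambda$ has operator norm on $\HH_{rad}^{s,k}$ bounded by $\max(\lambda^{s-n/2+1},\,\lambda^{k-n/2+1})$, which is uniform for $\lambda \in [1/2,3/2]$, i.e., for $\tilde T \in [T/2, 3T/2]$. Noting that $\mathbf{\Psi}_{0,T} = D_T(\mathbf{u}_0 - \tfrac{1}{T}\mathbf{\Phi}(\cdot/T))$, this yields
\begin{equation*}
\|D_{\tilde T/T}\mathbf{\Psi}_{0,T}\|_{\HH_{rad}^{s,k}} \lesssim \|\mathbf{u}_0 - \tfrac{1}{T}\mathbf{\Phi}(\cdot/T)\|_{\HH_{rad}^{s,k}},
\end{equation*}
with an implicit constant depending only on $T$. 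For the residual $R(\tilde T/T)$, the fundamental theorem of calculus gives
\begin{equation*}
\tfrac{\tilde T}{T}\Phi(\tfrac{\tilde T}{T}\cdot) - \Phi = \int_1^{\tilde T/T}\hat\Phi(\mu\,\cdot)\,d\mu,
\end{equation*}
where I use the identity $\partial_\mu[\mu\Phi(\mu|\xi|)] = \hat\Phi(\mu|\xi|)$, which is exactly the relation encoded in \eqref{vector_self_similar_blow_up}, and an analogous formula holds for the second component. Since $\Phi$, $\hat\Phi$ and all their dilated versions for $\mu \in [1/2,3/2]$ have uniformly bounded $\dot H^{s,k}_{rad}$ (respectively $\dot H^{s-1,k-1}_{rad}$) norms, this yields $\|R(\tilde T/T)\|_{\HH_{rad}^{s,k}} \lesssim |\tilde T - T|$, completing the norm estimate.

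For the continuity statement, the residual $\tilde T \mapsto R(\tilde T/T)$ is in fact Lipschitz by the same FTC representation. Continuity of $\tilde T \mapsto D_{\tilde T/T}\mathbf{\Psi}_{0,T}$ then follows from the strong continuity of the dilation group on $\HH_{rad}^{s,k}$: for a Schwartz radial profile $\mathbf{h}$, continuity of $\lambda \mapsto D_\lambda \mathbf{h}$ is immediate on the Fourier side by dominated convergence, and since the operator norms of $D_\lambda$ are uniformly bounded on compact subsets of $(0,\infty)$, density of such Schwartz profiles in $\HH_{rad}^{s,k}$ combined with a standard three-$\varepsilon$ argument extends continuity to arbitrary $\mathbf{\Psi}_{0,T}$.

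The only genuinely nontrivial point is the strong continuity of the dilation group on homogeneous radial Sobolev spaces, which is a standard density statement; the remainder of the proof reduces to elementary bookkeeping with Sobolev scaling exponents. Accordingly, I foresee no substantial analytical obstacle in executing the plan.
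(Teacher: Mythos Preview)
Your proposal is correct and follows essentially the same approach as the paper, which simply cites \cite[Lemma 8.2]{glogic2025globally} after observing that the decomposition \eqref{initial_cond_form_2} matches the setup there upon renaming $\tilde T/T$ as $T$; you have effectively reconstructed the content of that cited lemma via the same dilation/FTC argument.
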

	\begin{proof}
		Both statements follow from \cite[Lemma 8.2]{glogic2025globally}, by observing that \eqref{initial_cond_form_2} corresponds to \cite[equation (8.7)]{glogic2025globally}, up to renaming $\frac{\tilde{T}}{T}$ as $T$.
	\end{proof}
	Thanks to \autoref{estimate_initial_cond}, there exists $\delta>0$ sufficiently small and $N>0$ large enough such that $\frac{\delta}{N}<\frac{T}{2}$ and, for each $\tilde{T}\in [T-\frac{\delta}{N}, T+\frac{\delta}{N}]$ and $\norm{\mathbf{u}_0-\frac{1}{T}\mathbf{\Phi}\left(\frac{\cdot}{T}\right)}_{\HH_{rad}^{s,k}}<\frac{\delta}{N^2}$, we have 
	\begin{align*}
		\lVert U(\tilde{T}, u_0, \hat{u}_0)\rVert_{\HH_{rad}^{s,k}}\leq \frac{\delta}{\mathcal{C}},
	\end{align*}
	for $\mathcal{C}$ sufficiently large, as required in \autoref{well-posed_system_stabilized}. Consequently, there exists a unique $\mathbf{\Psi}_{\tilde{T}}$ solving
	\begin{align*}
		\mathbf{\Psi}_{\tilde{T}}(\tau)=\ &\mathbf{S}(\tau)\left(U(\tilde{T}, u_0, \hat{u}_0)-C(U(\tilde{T}, u_0, \hat{u}_0),\mathbf{\Psi}_{\tilde{T}},\mathbf{Z}_{\tilde{T}})\right)  
		\\ &+\int_0^\tau \mathbf{S}(\tau-s)\left(\mathbf{N}(\mathbf{\Psi}_{\tilde{T}}+\mathbf{Z}_{\tilde{T}})+\mathbf{V}\mathbf{Z}_{\tilde{T}}\right) ds
	\end{align*}
	It remains to show that there exists a choice of $\tilde{T}$ such that \begin{align*}
	    C(U(\tilde{T}, u_0, \hat{u}_0),\mathbf{\Psi}_{\tilde{T}},Z_{\tilde{T}})=0.
	\end{align*} This is the content of the following lemma.
	\begin{lemma}\label{no_corrector}
		There exist constants $\delta>0 $ sufficiently small and $N>0$ sufficiently large such that the following holds. If \begin{align*}
		    \norm{\mathbf{u}_0-\frac{1}{T}\mathbf{\Phi}\left(\frac{\cdot}{T}\right)}_{\HH_{rad}^{s,k}}<\frac{\delta}{N^2} \quad \text{and} \quad \mathbf{z}\in C_{T}B_{\delta/N^2}
		\end{align*} then there exists $\tilde{T}\in [T-\frac{\delta}{N},T+\frac{\delta}{N}]$ such that  
		\begin{align*}
			\mathbf{\Psi}_{\tilde{T}}(\tau)=\mathbf{S}(\tau)\left(U(\tilde{T}, u_0, \hat{u}_0)\right)  
			+\int_0^\tau \mathbf{S}(\tau-s)\left(\mathbf{N}(\mathbf{\Psi}_{\tilde{T}}+\mathbf{Z}_{\tilde{T}})+\mathbf{V}\mathbf{Z}_{\tilde{T}}\right) ds.
		\end{align*}
	\end{lemma}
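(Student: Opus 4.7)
The plan is to use a one-dimensional modulation argument in $\tilde T$: since $C(\cdot,\cdot,\cdot) \in \operatorname{Rg}(P)$ is a one-dimensional constraint, an intermediate-value argument in a suitable scalar reduction should yield the desired $\tilde T$. Concretely, set
$$
F(\tilde T) := C\bigl(U(\tilde T, u_0, \hat u_0), \mathbf{\Psi}_{\tilde T}, \mathbf{Z}_{\tilde T}\bigr), \qquad \tilde T \in [T - \delta/N, T + \delta/N],
$$
and, using $\operatorname{Rg}(P) = \operatorname{span}\{\mathbf g\}$, write $F(\tilde T) = f(\tilde T)\mathbf g$ with $f$ a real scalar function. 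The goal reduces to showing that $f$ vanishes somewhere on this interval.

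First I would establish continuity of $F$. \autoref{estimate_initial_cond} yields continuity of $\tilde T \mapsto U(\tilde T, u_0, \hat u_0)$ in $\HH_{rad}^{s,k}$, the explicit formula for $\mathbf Z_{\tilde T}$ together with the pathwise regularity of $\mathbf z$ granted by \autoref{hp_noise_2} gives continuity of $\tilde T \mapsto \mathbf Z_{\tilde T}$ in $C([0,2T]; \mathbf H^{k + (n+1)/2})$, and the Lipschitz continuity of the solution map from \autoref{well-posed_system_stabilized} transfers this to $\tilde T \mapsto \mathbf \Psi_{\tilde T}$ in $\mathcal X$. Hence $F$ is continuous.

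Next comes the leading-order expansion of $f$ in $\sigma := \tilde T/T - 1$. Using \eqref{initial_cond_form_2}, I would split $U(\tilde T, u_0, \hat u_0) = \mathbf R(\sigma) + \mathbf Q(\sigma)$, where
$$
\mathbf R(\sigma) = \bigl((1+\sigma)\Phi((1+\sigma)\,\cdot\,) - \Phi,\ (1+\sigma)^2 \hat\Phi((1+\sigma)\,\cdot\,) - \hat\Phi\bigr),
$$
and $\mathbf Q(\sigma)$ is the rescaling of the initial perturbation, controlled by $\norm{\mathbf Q(\sigma)}_{\HH_{rad}^{s,k}} \lesssim \delta/N^2$ uniformly in $\sigma$. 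A Taylor expansion of $\mathbf R$ in $\HH_{rad}^{s,k}$ gives $\mathbf R(\sigma) = \sigma\mathbf h + O(\sigma^2)$, with $\mathbf h := (\Phi + \xi\cdot\nabla\Phi,\ 2\hat\Phi + \xi\cdot\nabla\hat\Phi)$. Combining $\norm{\mathbf \Psi_{\tilde T}}_{\mathcal X} \lesssim \delta/N$ (from \autoref{well-posed_system_stabilized} applied with data of size $\lesssim \delta/N$) and $\norm{\mathbf z}_{C_T \mathbf H^{k+(n+1)/2}} \lesssim \delta/N^2$ with the bounds \eqref{estimate_nonlinearity_selfsimilar}, \eqref{estimate_linear_stochastic} and \eqref{estimate_zeta}, the integral part of $C$ is controlled by $(\delta/N)^2 + \delta/N^2$ in $\HH_{rad}^{s,k}$. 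Writing $P\mathbf h = \alpha \mathbf g$, this produces
$$
f(\tilde T) = \alpha\sigma + O\bigl((\delta/N)^2 + \delta/N^2\bigr).
$$

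The hard part will be the nondegeneracy $\alpha \ne 0$. This is a purely deterministic fact: $\mathbf h$ is, up to constants, $\partial_{\tilde T}\mathbf U\big|_{\tilde T = T}$ for the self-similar family, and the $\lambda = 1$ eigenvalue of $\mathbf L$ arises precisely from this time-translation symmetry, so that $\mathbf h$ projects non-trivially onto $\operatorname{Rg}(P)$. I would verify $\alpha \ne 0$ either directly by pairing $\mathbf h$ against a spanning element of $\operatorname{Rg}(P^*)$ using the explicit form of $\mathbf g$, or by quoting the corresponding fact from \cite[Section 8]{glogic2025globally}. With $\alpha \ne 0$ in hand, choosing $N$ sufficiently large (depending on $\alpha$, $T$, and the implicit constants above) makes $|\alpha\sigma|$ dominate the remainder at $\sigma = \pm \delta/(NT)$, so $f(T - \delta/N)$ and $f(T + \delta/N)$ have opposite signs, and the continuity of $f$ yields a zero $\tilde T \in (T - \delta/N, T + \delta/N)$, which is exactly the conclusion of the lemma.
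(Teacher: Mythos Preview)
Your proposal is correct and follows essentially the same route as the paper: both reduce to a one-dimensional scalar condition via $\operatorname{Rg}(P)=\operatorname{span}\{\mathbf g\}$, extract the leading-order linear term in $\tilde T/T-1$ from the rescaling of $\mathbf\Phi$ (the paper cites \cite[Lemma 8.2]{glogic2025globally} for the explicit coefficient $2\sqrt{n-4}\|\mathbf g\|_{\HH_{rad}^{s,k}}^2$, which is your $\alpha$), bound the remaining contributions from $\mathbf Q$, $\mathbf N$, and $\mathbf V\mathbf Z$ by $O(\delta^2+\delta/N^2)$, and establish continuity in $\tilde T$ via \autoref{estimate_initial_cond} and the Lipschitz dependence in \autoref{well-posed_system_stabilized}. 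The only cosmetic difference is that you conclude by the intermediate value theorem on $f$, whereas the paper rewrites the equation $f(\tilde T)=0$ as a fixed-point problem $\tilde T = F(\tilde T)$ and applies Brouwer; in one dimension these are equivalent. One minor imprecision: \autoref{well-posed_system_stabilized} only places $\mathbf\Psi_{\tilde T}$ in $\mathcal X_\delta$, not $\mathcal X_{\delta/N}$, so the nonlinear contribution is $O(\delta^2)$ rather than $O((\delta/N)^2)$---but this still suffices for the sign-change argument once $\delta$ is small relative to $1/N$.
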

	\begin{proof}
		By \autoref{well-posed_system_stabilized} and \autoref{estimate_initial_cond}, we can choose  $\delta,N>0$ such that $\frac{\delta}{N}\leq \frac{T}{2}$ and for each $\tilde{T}\in [T-\frac{\delta}{N},T+\frac{\delta}{N}]$ we have a unique $\mathbf{\Psi}_{\tilde{T}}$ solving
		\begin{align*}
			\mathbf{\Psi}_{\tilde{T}}(\tau)=\ &\mathbf{S}(\tau)\left(U(\tilde{T}, u_0, \hat{u}_0)-C(U(\tilde{T}, u_0, \hat{u}_0),\mathbf{\Psi}_{\tilde{T}},\mathbf{Z}_{\tilde{T}})\right)  \\ &
			+\int_0^\tau \mathbf{S}(\tau-s)\left(\mathbf{N}(\mathbf{\Psi}_{\tilde{T}}+\mathbf{Z}_{\tilde{T}})+\mathbf{V}\mathbf{Z}_{\tilde{T}}\right) ds.
		\end{align*}
		By the definition of the map $C(U(\tilde{T}, u_0, \hat{u}_0),\mathbf{\Psi}_{\tilde{T}},\mathbf{Z}_{\tilde{T}})$,  it is sufficient to show that there exists a choice of $\tilde{T}$ possibly depending on $u_0, \hat{u}_0,T, \mathbf{z}$ such that
		\begin{align*}
			\langle C(U(\tilde{T}, u_0, \hat{u}_0),\mathbf{\Psi}_{\tilde{T}},\mathbf{Z}_{\tilde{T}}),\,\mathbf{g}\rangle_{\HH_{rad}^{s,k}}=0.
		\end{align*}
		First, we observe that, arguing as in \cite[Lemma 8.2]{glogic2025globally}, we have
		\begin{align*}
			\langle PU(\tilde{T}, u_0, \hat{u}_0),\mathbf{g}\rangle_{\HH_{rad}^{s,k}}=2\sqrt{n-4}\norm{\mathbf{g}}_{\HH_{rad}^{s,k}}^2\left(\frac{\tilde{T}}{T}-1\right)+R_{1}\left(\mathbf{u}_0-\frac{1}{T}\mathbf{\Phi}\left(\frac{\cdot}{T}\right),\frac{\tilde{T}}{T}\right),
		\end{align*}
		where $R_{1}(\mathbf{u}_0-\frac{1}{T}\mathbf{\Phi}\left(\frac{\cdot}{T}\right),\frac{\tilde{T}}{T})$ is continuous in $\tilde{T}$ and satisfies \begin{align*}
			\left\lvert R_{1}\left(\mathbf{u}_0-\frac{1}{T}\mathbf{\Phi}\left(\frac{\cdot}{T}\right),\frac{\tilde{T}}{T}\right) \right\rvert\lesssim \frac{\delta}{N^2}.  
		\end{align*}
		Here we used again the argument from \cite[Lemma 8.2]{glogic2025globally}, which applies to our case by renaming $\frac{\tilde{T}}{T}$ as $T$.
		Next, set $R_2(z,\mathbf{u}_0-\mathbf{\Phi},\tilde{T})=P\int_0^{+\infty}  e^{-s}\left(\mathbf{N}(\mathbf{\Psi}_{\tilde{T}}+\mathbf{Z}_{\tilde{T}})+\mathbf{V}\mathbf{Z}_{\tilde{T}}\right) ds$. Then the problem reduces to finding $\tilde{T}$ such that \begin{align*}
			\tilde{T} =T-T\frac{R_{1}(\mathbf{u}_0-\mathbf{\Phi},\tilde{T})}{2\sqrt{n-4}\norm{\mathbf{g}}_{\HH_{rad}^{s,k}}^2}-T\frac{R_2(z,\mathbf{u}_0-\mathbf{\Phi},\tilde{T})}{2\sqrt{n-4}\norm{\mathbf{g}}_{\HH_{rad}^{s,k}}^2}.
		\end{align*}
		Defining the map
		\begin{align*}
			F(\tilde{T}):\left[T-\frac{\delta}{\mathcal{C}},T+\frac{\delta}{\mathcal{C}}\right]\rightarrow \R,\ F(\tilde{T})=T-T\frac{R_{1}(\mathbf{u}_0-\mathbf{\Phi},\tilde{T})}{2\sqrt{n-4}\norm{\mathbf{g}}_{\HH_{rad}^{s,k}}^2}-T\frac{R_2(z,\mathbf{u}_0-\mathbf{\Phi},\tilde{T})}{2\sqrt{n-4}\norm{\mathbf{g}}_{\HH_{rad}^{s,k}}^2},
		\end{align*}
		we are left to find a fixed point of $F.$
		Observe that $R_2$ is continuous in $\tilde{T}$. Indeed, for $\tilde{T}_1,\tilde{T}_2>0$, we can write \begin{align*}
			Z_{\tilde{T}_2}(\tau,\xi)&=\tilde{T}_2(1-e^{-\tau})z(\tilde{T}_2 e^{-\tau},\xi \tilde{T}_2(1-e^{-\tau}))\\ & =\frac{\tilde{T}_2}{\tilde{T}_1}\tilde{T}_1 (1-e^{-\tau}) z\left(\frac{\tilde{T}_2}{\tilde{T}_1}\tilde{T}_1 e^{-\tau},\xi \frac{\tilde{T}_2}{\tilde{T}_1}\tilde{T}_1 (1-e^{-\tau})\right)\\ & = \tilde{T}_1(1-e^{-\tau})\Tilde{z}_{\tilde{T}_1,\tilde{T}_2}\left(\tilde{T}_1 e^{-\tau},\xi \tilde{T}_1(1-e^{-\tau})\right),\quad \Tilde{z}_{\tilde{T}_1,\tilde{T}_2}(t,x)=\frac{\tilde{T}_2}{\tilde{T}_1} z\left(\frac{\tilde{T}_2}{\tilde{T}_1}t,\frac{\tilde{T}_2}{\tilde{T}_1}x\right).
		\end{align*}
		In particular $\Tilde{z}_{\tilde{T}_1,\tilde{T}_2}\rightarrow z \in C([0,2T];H^{k+\frac{n+1}{2}}_{rad})$ as $\tilde{T}_1\rightarrow \tilde{T}_2.$ A similar argument applies to the second component of $\mathbf{Z}$, and by the continuity of the solution map, \emph{cf.} \autoref{well-posed_system_stabilized}, we also have $\mathbf{\Psi}_{\tilde{T}_1}\rightarrow \mathbf{\Psi}_{\tilde{T}_2}$ in $\mathcal{X}$ as $\tilde{T}_1\rightarrow \tilde{T}_2.$ Combining this information with the estimates \eqref{estimate_linear_stochastic}, \eqref{estimate_zeta} and \eqref{estimate_nonlinearity_selfsimilar}, implies that $R_2(z,\mathbf{u}_0-\mathbf{\Phi},\tilde{T})$ is continuous in $\tilde{T}$. Moreover, by \eqref{estimate_linear_stochastic}, \eqref{estimate_zeta} and \eqref{estimate_nonlinearity_selfsimilar}, we have that
		\begin{align*}
			\lvert R_2(z,\mathbf{u}_0-\mathbf{\Phi},\tilde{T})\rvert & \lesssim \norm{\mathbf{g}}_{\HH_{rad}^{s,k}}\int_0^{+\infty} e^{-s}\left(e^{-2\overline{\omega} s}\delta^2+e^{-2\overline{\omega} s}\frac{\delta^2}{N^4}+e^{-\overline{\omega} s}\frac{\delta}{N^2}\right)ds.
		\end{align*}
		Hence, by taking $\delta>0$ sufficiently small and $N>0$ sufficiently large, we can ensure
		\begin{align*}
			\lvert R_{1}(\mathbf{u}_0-\mathbf{\Phi},\tilde{T})\rvert+\lvert R_2(z,\mathbf{u}_0-\mathbf{\Phi},\tilde{T})\rvert\leq \frac{\delta}{T N}\left(2\sqrt{n-4}\norm{\mathbf{g}}_{\HH_{rad}^{s,k}}^2\right).
		\end{align*}
		Therefore, $F$ is continuous and, by choosing $\delta>0$ sufficiently small and $N>0$ sufficiently large, it maps $[T-\frac{\delta}{N},T+\frac{\delta}{N}]$ into itself. The Brouwer fixed-point theorem then guarantees the existence of a fixed point, completing the proof.
	\end{proof}
	Lastly, we need a lemma that relates the mild solution constructed in self-similar variables to that obtained in physical variables. Indeed, we have shown that there exists a unique mild solution $\mathbf{\Psi}$ to \eqref{stability_system_ss}. Moreover, since $\mathbf{\Phi}$ is a classical solution of \eqref{wave_map_selfsimilar_variables}, it is also a mild one. Consequently, their sum is a mild solution of \eqref{DPD_self_similar}.
	The following result establishes the link between this solution and the fixed points of \eqref{fixed_point_problem_v}.
	\begin{lemma}\label{lemma_reconstruction_solution}
		If $\mathbf{W}=(W,\hat{W})$ is a (global) mild solution of \eqref{DPD_self_similar} with initial condition $\mathbf{W}_0=(\tilde{T} u_0(\tilde{T}\cdot),\tilde{T}^2 \hat{u}_0(\tilde{T}\cdot))$ then the function
		\begin{align*}
			\mathbf{w}(t,x):=\left(\frac{1}{\tilde{T}-t}W\left(\log\left(\frac{\tilde{T}}{\tilde{T}-t}\right),\frac{x}{\tilde{T}-t}\right),\frac{1}{(\tilde{T}-t)^2}\hat{W}\left(\log\left(\frac{\tilde{T}}{\tilde{T}-t}\right),\frac{x}{\tilde{T}-t}\right)\right)
		\end{align*}
		is a mild solution of \eqref{fixed_point_problem_v} on $[0,\tilde{T})$ with initial condition $\mathbf{w}_0=( u_0,\hat{u}_0)$.
	\end{lemma}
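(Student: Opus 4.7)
The plan is to show that the self-similar mild formulation for $\mathbf{W}$ transforms, under the inverse of the similarity change of variables, exactly into the physical mild formulation \eqref{fixed_point_problem_v} for $\mathbf{w}$. The computation rests on the scaling invariance of the wave equation and on the specific homogeneity of the nonlinearity $\mathbf{n}$, which together yield an intertwining identity between the two semigroups $\mathbf{S}_0(\tau)$ and $\mathbf{T}(t)$.

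First, I would introduce the family of rescaling operators
\begin{align*}
\Lambda_t(f,\hat f)(x)=\Bigl(\tfrac{1}{\tilde T-t}f\bigl(\tfrac{x}{\tilde T-t}\bigr),\,\tfrac{1}{(\tilde T-t)^2}\hat f\bigl(\tfrac{x}{\tilde T-t}\bigr)\Bigr),
\end{align*}
so that by the construction in \autoref{sec_change of variables} we have $\mathbf{w}(t,\cdot)=\Lambda_t\mathbf{W}(\tau(t),\cdot)$ and $\mathbf{z}(t,\cdot)=\Lambda_t\mathbf{Z}(\tau(t),\cdot)$, with $\tau(t)=\log(\tilde T/(\tilde T-t))$. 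Since the classical solutions of \eqref{linear_wave_evolution} transform into classical solutions of $\partial_\tau\mathbf{U}=\mathbf{L}_0\mathbf{U}$ exactly via $\Lambda_t$, and by the very construction of $\mathbf{L}_0$ and its semigroup $\mathbf{S}_0$ in \cite[Proposition~4.1]{glogic2025globally}, one obtains on a dense subspace, and hence by extension on $\HH_{rad}^{s,k}$, the intertwining relation
\begin{align*}
\Lambda_t\,\mathbf{S}_0(\tau(t)-\tau(s))=\mathbf{T}(t-s)\,\Lambda_s, \qquad 0\le s\le t<\tilde T.
\end{align*}
A short direct check based on \eqref{nonlinear_operator_physical} shows moreover that $|\xi|(W+Z)(\tau(s),\xi)=|x|(w+z)(s,x)$ for $\xi=x/(\tilde T-s)$, and therefore
\begin{align*}
\Lambda_s\mathbf{n}(\mathbf{W}(\tau(s))+\mathbf{Z}(\tau(s)))=(\tilde T-s)\,\mathbf{n}(\mathbf{w}(s)+\mathbf{z}(s)).
\end{align*}

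Next, starting from the mild identity
\begin{align*}
\mathbf{W}(\tau)=\mathbf{S}_0(\tau)\mathbf{W}_0+\int_0^{\tau}\mathbf{S}_0(\tau-\sigma)\mathbf{n}\bigl(\mathbf{W}(\sigma)+\mathbf{Z}(\sigma)\bigr)\,d\sigma,
\end{align*}
I would apply $\Lambda_t$ at $\tau=\tau(t)$ and perform the substitution $\sigma=\tau(s)$, $d\sigma=ds/(\tilde T-s)$. The linear term becomes
\begin{align*}
\Lambda_t\mathbf{S}_0(\tau(t))\mathbf{W}_0=\mathbf{T}(t)\Lambda_0\mathbf{W}_0=\mathbf{T}(t)(u_0,\hat u_0),
\end{align*}
using that $\Lambda_0\mathbf{W}_0=(u_0,\hat u_0)$ by the form of $\mathbf{W}_0$. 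For the nonlinear term, the intertwining and the scaling of $\mathbf{n}$ yield
\begin{align*}
\Lambda_t\!\int_0^t\mathbf{S}_0(\tau(t)-\tau(s))\mathbf{n}(\mathbf{W}(\tau(s))+\mathbf{Z}(\tau(s)))\tfrac{ds}{\tilde T-s}=\int_0^t\mathbf{T}(t-s)\mathbf{n}(\mathbf{w}(s)+\mathbf{z}(s))\,ds,
\end{align*}
where the factor $\tilde T-s$ from the scaling of $\mathbf{n}$ exactly cancels the Jacobian. Combining these two identities produces \eqref{fixed_point_problem_v}, and the evaluation at $t=0$ gives $\mathbf{w}(0)=\Lambda_0\mathbf{W}_0=(u_0,\hat u_0)$, as required.

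The main obstacle I anticipate is the rigorous justification of the intertwining identity on $\HH_{rad}^{s,k}$, rather than only on smooth radial functions. This should be handled by first verifying it on a core for $\mathbf{L}_0$ (for instance $C^\infty_{c,\mathrm{rad}}$ pairs), where both sides are classical, and then extending by density using the boundedness of $\mathbf{T}$, $\mathbf{S}_0$, and $\Lambda_t$ on the relevant Sobolev spaces. A secondary, but essentially routine, point is that the change of variables $\sigma\mapsto s$ in a Bochner integral against a strongly continuous semigroup-valued integrand is valid; this can be confirmed by approximating $\mathbf{n}(\mathbf{W}+\mathbf{Z})$ by piecewise-constant integrands in $\sigma$ and passing to the limit in $\HH_{rad}^{s,k}$.
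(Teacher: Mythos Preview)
Your proposal is correct and follows essentially the same approach as the paper: both establish the intertwining identity between $\mathbf{S}_0$ and $\mathbf{T}$ via the rescaling operators by verifying it on smooth data and extending by density, then transform the self-similar mild formulation back to physical variables using the substitution $\sigma=\tau(s)$ together with the scaling identity for the nonlinearity. Your presentation with the operators $\Lambda_t$ is slightly more streamlined notationally, but the content and the order of the steps coincide with the paper's argument.
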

	\begin{proof}
		Let us introduce, on $\HH_{rad}^{s,k}$, the family of operators $\tilde{S}_{\tilde{T}}(t)$ for $t<\tilde{T}$ defined by 
		\begin{align}\label{def_aux_semigroup}
			[\tilde{S}_{\tilde{T}}(t)\mathbf{u}_0](x):=\begin{bmatrix}  \frac{1}{\tilde{T}-t}, & 0\\ 0, & \frac{1}{(\tilde{T}-t)^2}\end{bmatrix} 
			\left[\mathbf{S}_0\left(\frac{\tilde{T}}{\tilde{T}-t}\right)\left(\tilde{T} u_0(\tilde{T}\cdot),\tilde{T}^2 \hat{u}_0(\tilde{T} \cdot)\right)  \right]\left(\frac{x}{\tilde{T}-t}\right), 
		\end{align}
		where $ \mathbf{u}_0=(u_0,\hat{u}_0).$ It is straightforward to verify that $\tilde{S}_{\tilde{T}}$ coincides with the standard wave semigroup on $\HH_{rad}^{s,k}$ for all $t<\tilde{T}$. Indeed, by density we can assume $\mathbf{u}_0\in \mathscr{S}_{rad}\times \mathscr{S}_{rad}$. In this case, the wave semigroup yields the unique classical solution of the wave equation, namely $\mathbf{u}(t)=\mathbf{T}(t)\mathbf{u}_0.$ Under the change of variables introduced in \autoref{prel_self_similar}, we define
		\begin{align*}
			\mathbf{U}(\tau,\xi)=\begin{bmatrix}\tilde{T} e^{-\tau} ,& 0\\ 0,& \tilde{T}^2 e^{-2\tau}\end{bmatrix}\mathbf{u}(\tilde{T} (1-e^{-\tau}),\xi \tilde{T} e^{-\tau}).
		\end{align*}
		Then $\mathbf{U}$ is a classical (hence also mild) solution of the wave equation in self-similar variables, with initial condition 
		$\mathbf{U}_0=(\tilde{T} u_0(\tilde{T}\cdot),\tilde{T}^2 \hat{u}_0(\tilde{T}\cdot))$, that is $\mathbf{U}(\tau,\xi)=[\mathbf{S}_0(\tau)\mathbf{U}_0](\xi).$ Observing that equation \eqref{def_aux_semigroup} corresponds to inverting this change of variables in the mild formulation of $\mathbf{U}$, the claim follows. We are now ready to prove our result. By the definition of $\mathbf{v}$, previous step and the chance of variables $r=\log(\frac{\tilde{T}}{\tilde{T}-s})$ we obtain for $t<\tilde{T}$, 
		\begin{align}
			\mathbf{w}(t,x)&=[{S}(t)\mathbf{u}_0](x)\notag\\ &+\begin{bmatrix}  \frac{1}{\tilde{T}-t}, & 0\\ 0 & \frac{1}{(\tilde{T}-t)^2}\end{bmatrix}\left[\int_0^{\log\left(\frac{\tilde{T}}{\tilde{T}-t}\right)}\mathbf{S}_0\left(\log\left(\frac{\tilde{T}}{\tilde{T}-t}\right)-s\right)\frac{(n-3)\mathbf{f}(\lvert \cdot \rvert ({W}+Z)(s)) }{\lvert \cdot \rvert^3}ds\right]\left(\frac{x}{\tilde{T}-t}\right)\notag\\ \label{mild_formulation_self_similar_change} &=[{S}(t)\mathbf{u}_0](x)\notag\\ &+\begin{bmatrix}  \frac{1}{\tilde{T}-t}, & 0\\ 0 & \frac{1}{(\tilde{T}-t)^2}\end{bmatrix}\left[\int_0^{\log\left(\frac{\tilde{T}}{\tilde{T}-t}\right)}\mathbf{S}_0\left(\log\left(\frac{\tilde{T}-s}{\tilde{T}-t}\right)\right)\frac{(n-3)\mathbf{f}(\lvert \cdot \rvert (W+Z)(\log\frac{\tilde{T}}{\tilde{T}-s})) }{(\tilde{T}-s)\lvert \cdot \rvert^3}ds\right]\left(\frac{x}{\tilde{T}-t}\right),
		\end{align}
		where 
		\begin{align*}
			\mathbf{f}(\alpha)=(0,2\alpha-\sin (2\alpha) ).
		\end{align*}
		To identify the second term, note that by the definition of $\mathbf{W}$
		\begin{align*}
			\frac{(n-3)\mathbf{f}(\lvert \cdot \rvert (W+Z)(\log\frac{\tilde{T}}{\tilde{T}-r},\cdot)) }{(\tilde{T}-r)\lvert \cdot \rvert^3}&=\begin{bmatrix}
				\tilde{T}-r, & 0 \\ 0, & (\tilde{T}-r)^2
			\end{bmatrix}\frac{\mathbf{f}((\tilde{T}-r)\lvert \cdot\rvert (w+z)(r,(\tilde{T}-r)\cdot))}{(\tilde{T}-r)^3\lvert \cdot\rvert^3}
		\end{align*}
		and, obviously, $\mathbf{S}_0\left(\log\left(\frac{\tilde{T}-r}{\tilde{T}-t}\right)\right)=\mathbf{S}_0\left(\log\left(\frac{\tilde{T}-r}{(\tilde{T}-r)-(t-r)}\right)\right)$. Using \eqref{def_aux_semigroup} with the parameter to $\tilde{T}-r$, we deduce
		\begin{multline*}
			\left[\mathbf{S}_0\left(\log\left(\frac{\tilde{T}-r}{\tilde{T}-t}\right)\right)\frac{(n-3)\mathbf{f}(\lvert \cdot \rvert (W+Z)(\log\frac{\tilde{T}}{\tilde{T}-r})) }{(\tilde{T}-r)\lvert \cdot \rvert^3}\right](\cdot)\\ =\begin{bmatrix}
				\tilde{T}-t, & 0 \\ 0, & (\tilde{T}-t)^2
			\end{bmatrix} [\mathbf{T}(t-r) \mathbf{n}((\mathbf{w}+\mathbf{z})(r))]\left((\tilde{T}-t)\cdot\right).   
		\end{multline*}
		Substituting this expression into \eqref{mild_formulation_self_similar_change} yields the claim.
	\end{proof}
	The proof of \autoref{stability_lemma} now follows directly from \autoref{no_corrector} and \autoref{lemma_reconstruction_solution}.
	\begin{proof}[Proof of \autoref{stability_lemma}.]
		Let us first select $\delta>0$ and $N>0$ so that the assumptions of \autoref{no_corrector} are satisfied, and choose $\varepsilon $ in the statement of \autoref{stability_lemma} small enough such that
		\begin{align*}
			\mathbb{P}\left(\norm{\mathbf{u}_0-\mathbf{u}_{T}(0,\cdot)}_{\HH_{rad}^{s,k}}< \frac{\delta}{N^2}\right)>0.   
		\end{align*}
		Consequently, by \autoref{lemma_support_z} and the independence of $\mathbf{z}$ from $\mathcal{F}_0$, there exists a measurable set of positive probability, denoted by $\mathcal{N}_{\delta,N}$, such that
		\begin{align*}
			\sup_{t\in [0,2T]}\norm{\mathbf{z}(t)}_{\mathbf{H}^{k+\frac{n+1}{2}}}\leq \frac{\delta}{N^2},\ \norm{\mathbf{u}_0-\frac{1}{T}\mathbf{\Phi}\left(\frac{\cdot}{T}\right)}_{\HH_{rad}^{s,k}}<\frac{\delta}{N^2} \quad \textit{for each }\omega\in \mathcal{N}_{\delta,N}.
		\end{align*}
		Therefore, for each $\omega\in \mathcal{N}_{\delta,N}$, there exists a time \begin{align*}
		    \tilde{T}=\tilde{T}(\omega)\in   \left[T-\frac{\delta}{N},T+\frac{\delta}{N}\right]
		\end{align*} such that the claim of \autoref{no_corrector} holds, moreover $\frac{\delta}{N}\leq \frac{T}{2} $. Since $\mathbf{\Psi}+\mathbf{\Phi}$ is a mild solution of \eqref{DPD_self_similar}, it follows from \autoref{lemma_reconstruction_solution} and uniqueness of the mild solutions for \eqref{wave_map_system} (see also \autoref{Thm_well_posed}) that, on the set $\mathcal{N}_{\delta,N}$ we have the following representation
        \begin{align}\label{representation_formula_u}
			u(t,\lvert X\rvert)=\frac{1}{\tilde{T}-t}\Phi\left(\frac{\lvert X\rvert}{\tilde{T}-t}\right)+\frac{1}{\tilde{T}-t}\Psi\left(\log\left(\frac{\tilde{T}}{\tilde{T}-t}\right),\frac{\lvert X\rvert}{\tilde{T}-t}\right)+ z(t,\lvert X\rvert),
		\end{align}
        for all $t<\tilde{T}$.
		From \eqref{representation_formula_u}, we immediately deduce that $\norm{u(t)}_{\dot{H}^{s,k}_{rad}} \to +\infty$ as $t \to \tilde{T}$. Consequently,  $\tilde{T}(\omega)$ coincides with the blowup time $\tau^*(\boldsymbol{\mathcal{U}}_0,\omega)$ for every $\omega\in \mathcal{N}_{\delta,N}.$ Hence, \begin{align*}
		    \frac{T}{2}<\tau^*(\boldsymbol{\mathcal{U}}_0,\omega)1_{\mathcal{N}_{\delta,N}}<\frac{3}{2}T,
		\end{align*} 
        and the first component of the unique corotational mild solution of \eqref{eq_intro_noise}, provided by \autoref{corollary_well_posed}, satisfies \eqref{representation_formula_calU}. To establish the final claim, we combine the representation \eqref{representation_formula_u} with the definition of $\mathcal{X}_{\delta}$ and the embdedding of $\dot{H}^{s,k}_{rad}$ in $L^{\infty}$. Indeed, on $\mathcal{N}_{\delta,N}$, we have that
		\begin{align*}
			\left\lvert (\tilde{T}-t)u(t,(\tilde{T}-t)\lvert X\rvert)-\Phi\left(\lvert X\rvert\right)\right\rvert & \leq \left\lVert  \Psi\left(\log\left(\frac{\tilde{T}}{\tilde{T}-t}\right)\right)\right\rVert_{\dot{H}^{s,k}_{rad}}+ (\tilde{T}-t)\lVert z(t,(\tilde{T}-t)\cdot)\rVert_{\dot{H}^{s,k}_{rad}} \\ & \lesssim   (\tilde{T}-t)^{\overline{\omega}}+(\tilde{T}-t)^{s-\frac{d}{2}}\rightarrow 0
		\end{align*}
        uniformly on $\mathbb{R}^d$
		as $t\rightarrow\tilde{T}^-.$ Consequently, for each $\omega\in \mathcal{N}_{\delta,N}$
		\begin{align*}
			\mathcal{U}(t,(\tau^{*}(\boldsymbol{\mathcal{U}}_0,\omega)-t)X)\rightarrow \Phi(\lvert X\rvert)X
		\end{align*}
		locally uniformly in $\mathbb{R}^d$, which completes the argument.
	\end{proof}
	\section{Irreducibility of the stochastic corotational wave maps}\label{Section_Irreducibility}
	\noindent This section is devoted to establishing the second key component in the proof of \autoref{Thm_well_posed}, i.e., the following result.
	\begin{theorem}\label{Thm_irreducibility}
		Suppose that \autoref{hp_noise_2} holds and let \begin{align*}
		    \mathbf{u}_0\in \dot{H}^{s-1,k+2}_{rad}\times \dot{H}^{s-1,k}_{rad}\subseteq \HH_{rad}^{s,k} ,\quad  \mathbf{u}_1\in \HH_{rad}^{s,k},\quad \text{and} \quad t,\eps>0
		\end{align*} Let $\mathbf{u}$ denote the unique solution of \eqref{wave_map_system} with initial condition $\mathbf{u}_0$ given by \autoref{Thm_well_posed}. Then
		\begin{align*}
			\mathbb{P}(\tau^*(\mathbf{u}_0)\geq t,~ \norm{\mathbf{u}(t)-\mathbf{u}_1}_{\HH_{rad}^{s,k}}<\eps)>0.
		\end{align*}
	\end{theorem}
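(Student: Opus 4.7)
The plan follows the blueprint sketched in \autoref{sec_strategy}: a deterministic controllability construction combined with the support theorem for the stochastic convolution (\autoref{lemma_support_z}), tied together through the Da Prato--Debussche ansatz. A first reduction is to approximate $\mathbf{u}_1$ by a smoother target: by density of $\dot{H}^{s-1,k+2}_{rad}\times\dot{H}^{s-1,k}_{rad}$ in $\HH_{rad}^{s,k}$, I pick $\tilde{\mathbf{u}}_1$ in the smaller space with $\|\tilde{\mathbf{u}}_1-\mathbf{u}_1\|_{\HH_{rad}^{s,k}}<\eps/2$; it then suffices to prove the conclusion with $\mathbf{u}_1$ replaced by $\tilde{\mathbf{u}}_1$ and $\eps$ by $\eps/2$.

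Next, I construct a smooth interpolating path $\mathbf{v}(s)=(v(s),\hat v(s))$ on $[0,t]$ with $\mathbf{v}(0)=\mathbf{u}_0$, $\mathbf{v}(t)=\tilde{\mathbf{u}}_1$, velocity constraint $\partial_s v=\hat v$, and regularity $v\in C^2([0,t];\dot{H}^{k+2}_{rad})$, $\hat v\in C^1([0,t];\dot{H}^{k}_{rad})$. Such a path can be built by a cubic-type interpolation in $v$ with matched derivatives at the endpoints (so that $\hat v(0)$ and $\hat v(t)$ agree with the prescribed velocity data), with $\hat v$ recovered from $\partial_s v$. The controller is then defined by
\begin{align*}
\partial_t f(s):=\partial_s\hat v(s)-\Delta v(s)-\tfrac{n-3}{2|x|^3}\bigl(2|x|v(s)-\sin(2|x|v(s))\bigr),\quad f(0)=0,
\end{align*}
so that $f\in C^1([0,t];\dot{H}^{k}_{rad})$ and $\mathbf{v}$ is a classical (hence mild) solution on $[0,t]$ of the controlled equation. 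Setting $\mathbf{z}^{\mathrm{det}}$ to be the mild solution of the linear wave equation with forcing $\partial_t f$ and zero initial data, one has $\mathbf{z}^{\mathrm{det}}\in\mathcal{E}^k_{0,t}$, and $\mathbf{w}^{\mathrm{det}}:=\mathbf{v}-\mathbf{z}^{\mathrm{det}}$ is a global mild solution of the deterministic remainder equation \eqref{eq_reminder_physical} on $[0,t]$ with $\mathbf{z}$ replaced by $\mathbf{z}^{\mathrm{det}}$.

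On the stochastic side, I decompose $\mathbf{u}=\mathbf{w}+\mathbf{z}$ and aim at a continuous dependence statement for the driver in \eqref{fixed_point_problem_v}: any $\mathbf{z}$ close enough to $\mathbf{z}^{\mathrm{det}}$ in $C([0,t];\mathbf{H}^{k+\frac{n+1}{2}})$ yields a mild solution $\mathbf{w}$ existing on the entire $[0,t]$, with $\|\mathbf{w}-\mathbf{w}^{\mathrm{det}}\|_{C([0,t];\HH_{rad}^{s,k})}\to 0$ as $\|\mathbf{z}-\mathbf{z}^{\mathrm{det}}\|\to 0$. Granted this, \autoref{lemma_support_z} furnishes $\mathbb{P}(\|\mathbf{z}-\mathbf{z}^{\mathrm{det}}\|_{C([0,t];\mathbf{H}^{k+\frac{n+1}{2}})}<\delta)>0$ for every $\delta>0$; on this event $\tau^*(\mathbf{u}_0)>t$ and $\|\mathbf{u}(t)-\tilde{\mathbf{u}}_1\|_{\HH_{rad}^{s,k}}<\eps/2$, and the triangle inequality concludes.

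The main obstacle is the global continuous dependence on the driver on the full interval $[0,t]$, which may far exceed the short-time horizon produced by the fixed-point argument in the proof of \autoref{Thm_well_posed}. I plan to address this by a bootstrap along the bounded reference trajectory $\mathbf{w}^{\mathrm{det}}$: partition $[0,t]$ into finitely many subintervals whose lengths are calibrated against a uniform bound on $\|\mathbf{w}^{\mathrm{det}}(s)\|_{\HH_{rad}^{s,k}}+\|\mathbf{z}^{\mathrm{det}}(s)\|_{\HH_{rad}^{s,k}}$ for $s\in[0,t]$, and on each piece rerun the contraction argument of \autoref{Thm_well_posed} with this a priori bound as the reference size. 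The Lipschitz estimate of \autoref{lemma_regularity_nonlinear} transfers a small perturbation of the driver into a small $\HH_{rad}^{s,k}$ perturbation of $\mathbf{w}$ on each subinterval, and a discrete Gronwall step closes the estimate across successive pieces. The extra regularity hypothesis $\mathbf{u}_0\in\dot{H}^{s-1,k+2}_{rad}\times\dot{H}^{s-1,k}_{rad}$ in the statement is precisely what is needed for the controller $f$ to live in the scale $\dot{H}^{k}_{rad}$ and for $\mathbf{z}^{\mathrm{det}}$ to belong to $\mathcal{E}^k_{0,t}$.
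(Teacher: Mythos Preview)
Your overall strategy coincides with the paper's: approximate the target, build a controller sending $\mathbf{u}_0$ to the target, establish continuous dependence of $\mathbf{w}$ on the driver $\mathbf{z}$ via a partition-and-iterate argument, and close with the support theorem for the stochastic convolution. The bootstrap you sketch is exactly the content of the paper's \autoref{continuity_no_blow_up}.

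However, the regularity bookkeeping in your controller construction does not work as written. A genuine Hermite-type cubic in $v$ that matches $\partial_s v(0)=\hat u_0$ necessarily contains a term proportional to $\hat u_0\in\dot{H}^{s-1,k}_{rad}$, so $v(s)$ cannot lie in $\dot{H}^{k+2}_{rad}$ and your claim $v\in C^2([0,t];\dot{H}^{k+2}_{rad})$ fails; consequently $\Delta v$ is two derivatives short and $f\in C^1([0,t];\dot{H}^k_{rad})$ is unjustified. The paper's \autoref{prop_aux_control} handles exactly this mismatch: the velocity data $\hat u_0,\hat u_1$ enter the formula for $u$ only through expressions of the form $(I-\Delta)^{-1}((1-t)P(t)-I-\cdots)\hat u_0$, where $P(t)=e^{t(\Delta-I)}$, which simultaneously gain two spatial derivatives and are tuned so that the endpoint derivatives still match. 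This is the missing idea in your interpolation step.

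There is a second, related gap: even granting $f\in C^1([0,t];\dot{H}^k_{rad})$, the linear-wave solution $\mathbf{z}^{\mathrm{det}}$ with forcing $\partial_t f$ does not land in $\mathcal{E}^k_{0,t}=C([0,t];\mathbf{H}^{k+\frac{n+1}{2}})$, since the wave propagator does not gain the extra $\tfrac{n+1}{2}$ derivatives. Thus you cannot apply \autoref{lemma_support_z} in the strong norm $\|\mathbf{z}-\mathbf{z}^{\mathrm{det}}\|_{C([0,t];\mathbf{H}^{k+\frac{n+1}{2}})}$ as stated. The paper circumvents this by proving continuous dependence (\autoref{continuity_no_blow_up}) in the weaker space $\mathcal{E}^{s,k}_{0,t}=C([0,t];\HH_{rad}^{s,k})$, where $\mathbf{z}^{\mathrm{det}}$ genuinely lives; the resulting neighbourhood $\mathcal{Z}_0\subset\mathcal{E}^{s,k}_{0,t}$ then has nonempty intersection with $\mathcal{E}^k_{0,t}$ by density, and \autoref{lemma_support_z} gives $\mathbb{P}(\mathbf{z}\in\mathcal{Z}_0)>0$. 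Your continuity argument only needs $\mathbf{z}$ close in $\HH_{rad}^{s,k}$ anyway (that is all \autoref{lemma_regularity_nonlinear} uses), so switching to the weaker topology here costs nothing.
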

	\noindent This result establishes the irreducibility of the stochastic system \eqref{wave_map_system}. As a direct consequence, it implies that, with positive probability, the blowup of $\mathbf{u}$ can be prevented up to time $t$. The proof proceeds in two steps: (i) an approximate controllability argument, and (ii) continuous dependence of the solution map on the controls.
	\subsection{Approximate controllability}\label{approx_control}
	We study the translated nonlinear wave map
\begin{align}\label{translated_wave}
		\begin{cases}
			\partial_t^2 w-\Delta w=\frac{n-3}{2\lvert x\rvert^3}(2\lvert x\rvert (w+z)-\sin (2\lvert x\rvert (w+z))),\\
			w(0)= u_0,\\
			\partial_t w(0)=\hat{u}_0,    
		\end{cases}
	\end{align}
	where $z$ is, as before, the stochastic convolution associated with the wave equation. We adopt \eqref{translated_wave} instead of the more standard formulation
	\begin{align}\label{aux_wave_map}
		\begin{cases}
			\partial_t^2 u-\Delta u=\frac{n-3}{2\lvert x\rvert^3}(2\lvert x\rvert u-\sin (2\lvert x\rvert u))+\partial_t f,\\
			u(0)= u_0,\\
			\partial_t u(0)=\hat{u}_0,   
		\end{cases}
	\end{align}
	since $w$ enjoys better continuity properties with respect to $z$ than $u$ does with respect to $f$.
    
	For convenience, we define, for any $T_1>0$,
	\begin{align*}
		\mathcal{E}^{s,k}_{0,T_1}&:=\left\{\mathbf{z}=(z,\hat z)\in C([0,T_1];\HH_{rad}^{s,k}): z(0)=\hat{z}(0)=0,\ z(t)=\int_0^t \hat{z}(s)ds \right\}.
	\end{align*}
	This is a separable Banach space if endowed with the topology of $C([0,T_1];\HH_{rad}^{s,k})$. Here, the equality $z(t)=\int_0^t \hat{z}(s)ds$ has to be understood in the sense of tempered distributions, i.e. in $\mathscr{S}'$. 
	Now we can prove the approximate controllability result for equation \eqref{translated_wave}.
	\begin{proposition}\label{prop_aux_control}
		For any $T_1>0\ \text{and } \mathbf{u}_0=(u_0,\hat{u}_0),\ \mathbf{u}_1=(u_1,\hat{u}_1)\in \dot{H}^{s-1,k+2}_{rad}\times \dot{H}^{s-1,k}_{rad}\subseteq \HH_{rad}^{s,k}$, there exists $\mathbf{z}\in \mathcal{E}^{s,k}_{0,T_1}$ such that $\mathbf{w}=(w,\partial_t w)$ solving \eqref{translated_wave} exists on $[0,T_1]$ and satisfies \begin{align*}
			\mathbf{w}(0)=\mathbf{u}_0,\quad
			\mathbf{w}(T_1)+\mathbf{z}(T_1)=\mathbf{u}_1.
		\end{align*}
	\end{proposition}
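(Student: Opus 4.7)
The approach I would take is a reverse-engineering: rather than solving directly for $\mathbf{z}$, I would first choose a smooth space-time target $u(t,x)$ interpolating between $\mathbf{u}_0$ at $t=0$ and $\mathbf{u}_1$ at $t=T_1$, then define $\mathbf{z}$ as the solution of the linear wave equation whose forcing corrects the defect of $u$ from being a wave map, and finally set $\mathbf{w}:=(u-z,\partial_t u-\hat z)$.

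Concretely, first I would pick smooth cutoffs $\alpha,\beta\in C^\infty([0,T_1])$ satisfying $\alpha(0)=\alpha'(0)=\beta(0)=\beta'(0)=0$, $\alpha(T_1)=1$, $\alpha'(T_1)=0$, $\beta(T_1)=0$, $\beta'(T_1)=1$, and set
\begin{align*}
u(t):=u_0+t\hat u_0+\alpha(t)\bigl(u_1-u_0-T_1\hat u_0\bigr)+\beta(t)\bigl(\hat u_1-\hat u_0\bigr).
\end{align*}
By construction $u(0)=u_0$, $\partial_t u(0)=\hat u_0$, $u(T_1)=u_1$, $\partial_t u(T_1)=\hat u_1$, and the hypothesis $u_0,u_1\in\dot H^{s-1,k+2}_{rad}$, $\hat u_0,\hat u_1\in\dot H^{s-1,k}_{rad}$ yields $u\in C^\infty([0,T_1];\dot H^{s-1,k+2}_{rad})$ with $\partial_t u,\partial_t^2 u\in C^\infty([0,T_1];\dot H^{s-1,k}_{rad})$. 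Then I would define the forcing $f(t):=\partial_t^2 u(t)-\Delta u(t)-n_0(u(t))$ and take
\begin{align*}
\mathbf{z}(t):=\int_0^t\mathbf{T}(t-s)(0,f(s))\,ds;
\end{align*}
the constraint $z(t)=\int_0^t\hat z(s)\,ds$ encoded in $\mathcal{E}^{s,k}_{0,T_1}$ is then automatic from the semigroup formula.

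The key technical step is showing $f\in C([0,T_1];\dot H^{s-1,k-1}_{rad})$, which by standard mild-solution theory places $\mathbf{z}$ in $\mathcal{E}^{s,k}_{0,T_1}$. The $\partial_t^2 u$ contribution sits in $\dot H^{s-1,k}_{rad}$ by Step~1. For $\Delta u$, interpolation on the intersection space (valid since $s-1<s+1<k+2$) gives $u(t)\in\dot H^{s+1}\cap\dot H^{k+1}$, hence $\Delta u(t)\in\dot H^{s-1,k-1}_{rad}$. For the nonlinear term, the same interpolation yields $u(t)\in\dot H^{s,k}_{rad}$, so \autoref{lemma_regularity_nonlinear} gives $n_0(u(t))\in\dot H^{s-1,k}_{rad}\subset\dot H^{s-1,k-1}_{rad}$, with time-continuity inherited from the Lipschitz estimate of that lemma and the smoothness of $u$. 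With $f$ in hand, $\mathbf{w}:=(u-z,\partial_t u-\hat z)$ lies in $C([0,T_1];\HH^{s,k}_{rad})$; it satisfies $\mathbf{w}(0)=\mathbf{u}_0$ and $\mathbf{w}(T_1)+\mathbf{z}(T_1)=(u(T_1),\partial_t u(T_1))=\mathbf{u}_1$ by construction, while combining $\partial_t^2 z-\Delta z=f$ with the definition of $f$ yields $\partial_t^2 w-\Delta w=n_0(u)=n_0(w+z)$, so that $\mathbf{w}$ indeed solves \eqref{translated_wave}. The only delicate point is the regularity bookkeeping of Step~2: the two additional derivatives assumed on $u_0$ and $u_1$ are precisely what lets $\Delta u$ land at the level $\dot H^{s-1,k-1}_{rad}$, so that a single gain of regularity from the wave semigroup suffices to place $\mathbf{z}$ in $\HH^{s,k}_{rad}$.
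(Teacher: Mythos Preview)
Your overall strategy is exactly the paper's: prescribe a path $u(t)$ interpolating the data, declare the forcing $f$ to be the wave-map defect of $u$, and let $\mathbf{z}$ be the linear-wave solution driven by $f$. The paper even remarks that a cubic-in-time polynomial is the natural first guess. However, your execution has a regularity gap that the paper explicitly identifies and repairs.

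The error is the claim $u\in C^\infty([0,T_1];\dot H^{s-1,k+2}_{rad})$. Your interpolant contains the terms $t\hat u_0$ and $\beta(t)(\hat u_1-\hat u_0)$, and by hypothesis $\hat u_0,\hat u_1$ lie only in $\dot H^{s-1,k}_{rad}$, not $\dot H^{s-1,k+2}_{rad}$. Hence $u(t)\in\dot H^{s-1,k}_{rad}$ at best, and therefore $\Delta u(t)\in\dot H^{s-3,k-2}_{rad}$. Your interpolation argument in Step~2, which needs the top index $k+2$ to reach $\dot H^{k+1}$ and thus $\Delta u\in\dot H^{k-1}$, fails: from $\dot H^{s-1,k}$ you cannot interpolate above $\dot H^k$. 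Consequently $f\notin\dot H^{s-1,k-1}_{rad}$, the stochastic-convolution formula gives $\mathbf{z}$ only in $\HH^{s,k-1}_{rad}$, and the argument collapses. (Passing to the time-integrated forcing as the paper does would not save you either, since that route still requires $\int_0^t\Delta u\in\dot H^{s-1,k}$.)

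The paper fixes this by \emph{smoothing the velocity data} inside the interpolant: every appearance of $\hat u_0,\hat u_1$ in $u(t)$ is preceded by resolvent factors $(I-\Delta)^{-1}$ or $(I-\Delta)^{-2}$, combined with the heat semigroup $P(t)=e^{t(\Delta-I)}$ in such a way that the four boundary conditions $u(0)=u_0$, $\partial_tu(0)=\hat u_0$, $u(T_1)=u_1$, $\partial_tu(T_1)=\hat u_1$ are still met exactly, while $u\in C([0,T_1];\dot H^{s-1,k+2}_{rad})$ and $\partial_tu\in C([0,T_1];\dot H^{s-1,k}_{rad})$. This is precisely what the two extra derivatives on $u_0,u_1$ are for: they absorb the $\Delta$ on the position data, while the resolvents supply the missing two derivatives on the velocity data.
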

	\begin{proof}
		Without loss of generality, let us assume $T_1=1$.
		We aim to construct $\mathbf{u}=(u,\partial_t u)$ such that \begin{align*}
			u(0)=u_{0},\ u(1)=u_1,\ \partial_t u(0)=\hat{u}_0,\ \partial_t u(1)=\hat{u}_{1}.
		\end{align*}
		Since four conditions must be satisfied, it is clear that if $u$ is taken as a cubic polynomial in time, the coefficients can then be chosen to meet all the conditions. However, in order to relax the regularity assumptions on $\mathbf{u}_0$ and $\mathbf{u}_1$, and to construct $\mathbf{z}$ and $\mathbf{w}$ such that $\mathbf{u} = \mathbf{z} + \mathbf{w}$, with both $\mathbf{z}$ and $\mathbf{w}$ satisfying the requirements of the statement, it is necessary to modify the polynomial behavior through a suitable regularization procedure. With this in mind, and with a slight abuse of notation, let $P(t)$ denote the semigroup on $\dot{H}^l_{rad}$ associated to the infinitesimal generator $\Delta-I:D(\Delta-I)=\dot{H}^l_{rad}\cap \dot{H}^{l+2}_{rad}\subseteq \dot{H}^l_{rad}\rightarrow \dot{H}^l_{rad}$ for each $l>-\frac{n}{2}$.
		Let 
		\begin{align*}
			u(t)=\, &\left(1+2t^3-3t^2\right)u_0+t^2(3-2t)u_1\\ & -t^2(3-2t)\left((I-\Delta)^{-1}-(I-\Delta)^{-2}+(I-\Delta)^{-2}P(1)\right)\hat{u}_0\\ & -(I-\Delta)^{-1}((1-t)P(t)-I-(I-\Delta)^{-1}P(t)+(I-\Delta)^{-1})\hat{u}_0\\ & +t^2(3-2t)\left((I-\Delta)^{-1}+(I-\Delta)^{-2}-(I-\Delta)^{-2}P(1)\right)\hat{u}_{1}\\\ & -(I-\Delta)^{-1}\left(tP(1-t)+(I-\Delta)^{-1}P(1-t)-(I-\Delta)^{-1}P(1)\right)\hat{u}_{1},\\
			\hat{u}(t)&=\partial_t u(t),\ \mathbf{u}(t)=(u,\hat{u})(t).
		\end{align*}
		Then, clearly, 
		   $ \mathbf{u}(0)=\mathbf{u}_0,\
		\mathbf{u}(1)=\mathbf{u}_1.
		$		Moreover, we have the regularity \begin{align*}
			u\in C([0,1];\dot{H}^{s-1,k+2}_{rad}),\quad \hat{u}\in C([0,1];\dot{H}^{s-1,k}_{rad}).
		\end{align*}
		Next, define \begin{align*}
			f(t)&=\hat{u}(t)-\hat{u}_0-\int_0^t\left(\Delta u(s)+n_0(u(s))\right) ds,\quad \mathbf{F}(t)=\left(0,f(t)\right).
		\end{align*}
		By the regularity of $u, \hat{u}$, together with \autoref{lemma_regularity_nonlinear}, we have $f\in C([0,1];\dot{H}^{s-1,k}_{rad} )$. Therefore
		\begin{align*}
			\mathbf{F}\in C([0,1];\dot{H}^{s-1,k+2}_{rad}\times \dot{H}^{s-1,k}_{rad}).
		\end{align*}
		Now set 
		\begin{align*}
			\mathbf{z}(t)&=\mathbf{F}(t)+\int_0^t \mathbf{T}(t-s)\mathbf{A} \mathbf{F}(s) ds.
		\end{align*}
		It is straightforward to check that $\mathbf{z}$ satisfies
		\begin{align*}
			\begin{cases}
				\partial_t \mathbf{z}=\mathbf{A} \mathbf{z}+\partial_t \mathbf{F}\\
				\mathbf{z}(0)=0,
			\end{cases}     
		\end{align*}
		and that $\mathbf{u}$ together with $\mathbf{F}$ solves equation \eqref{aux_wave_map}. In particular, due to the smoothness of $\mathbf{F}$, we have \begin{align*}
		    \mathbf{z}\in C([0,1];\HH_{rad}^{s,k}).
		\end{align*}The claim follows.
	\end{proof}
	\subsection{Continuity of the solution mapping along the controllers}\label{Data_Continuity}
	The main result of this section is the following.
	\begin{proposition}\label{continuity_no_blow_up}
		Let $\mathbf{u}_0\in \HH_{rad}^{s,k}$ and $ \mathbf{z}\in \mathcal{E}^{s,k}_{0,T_1}$ be such that there exists a unique solution $\mathbf{w}\in C([0,T_1];\HH_{rad}^{s,k} )$ to \eqref{translated_wave}. Then, there exist neighborhoods $\mathcal{H}_0 \subseteq \HH_{rad}^{s,k}$ of $\mathbf{u}_0$ and $\mathcal{Z}_0 \subseteq \mathcal{E}^{s,k}_{0,T_1}$ of $\mathbf{z}$ such that for any $\mathbf{\Tilde{u}}_0\in \mathcal{H}_0 $ and $\mathbf{\Tilde{z}}\in \mathcal{Z}_0$, there exists a unique solution $\mathbf{\tilde{w}} \in C([0,T_1]; \HH_{rad}^{s,k})$ of \eqref{translated_wave}. Moreover the solution map $(\mathbf{u}_0,\mathbf{z})\rightarrow \mathbf{w}$  is continuous from $\mathcal{H}_0\times \mathcal{Z}_0$ into $C([0,T_1];\HH_{rad}^{s,k})$.
	\end{proposition}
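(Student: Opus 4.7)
My plan is to construct the perturbed solution $\tilde{\mathbf{w}}$ through the mild formulation
\begin{align*}
\tilde{\mathbf{w}}(t) = \mathbf{T}(t)\tilde{\mathbf{u}}_0 + \int_0^t \mathbf{T}(t-s)\,\mathbf{n}\bigl(\tilde{\mathbf{w}}(s)+\tilde{\mathbf{z}}(s)\bigr)\,ds,
\end{align*}
by a short-time Picard iteration on balls around the reference solution, and then to propagate existence and continuous dependence from $[0,\delta]$ to the entire interval $[0,T_1]$ by a bootstrap based on a Gr\"onwall-type estimate. Since $\mathbf{w}\in C([0,T_1];\HH_{rad}^{s,k})$ and $\mathbf{z}\in\mathcal{E}^{s,k}_{0,T_1}$, the quantity
\begin{align*}
R:=1+\sup_{t\in[0,T_1]}\|\mathbf{w}(t)\|_{\HH_{rad}^{s,k}}+\sup_{t\in[0,T_1]}\|\mathbf{z}(t)\|_{\HH_{rad}^{s,k}}
\end{align*}
is finite, and \autoref{lemma_regularity_nonlinear} combined with the semigroup bound \eqref{growth_wave_semigroup} yields, exactly as in the proof of \autoref{Thm_well_posed}, a time step $\delta=\delta(R)>0$ depending only on $R$ such that for all $(\tilde{\mathbf{u}}_0,\tilde{\mathbf{z}})$ with $\|\tilde{\mathbf{u}}_0-\mathbf{u}_0\|_{\HH_{rad}^{s,k}}$ and $\|\tilde{\mathbf{z}}-\mathbf{z}\|_{C([0,T_1];\HH_{rad}^{s,k})}$ small enough, a unique fixed point $\tilde{\mathbf{w}}$ exists in the closed ball of radius $3R$ of $C([0,\delta];\HH_{rad}^{s,k})$.

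Next I would derive the continuous-dependence estimate. On any subinterval $[0,T^*]\subseteq[0,T_1]$ where $\tilde{\mathbf{w}}$ exists and stays in the ball of radius $3R$, the difference $\mathbf{v}:=\tilde{\mathbf{w}}-\mathbf{w}$ satisfies
\begin{align*}
\mathbf{v}(t) = \mathbf{T}(t)(\tilde{\mathbf{u}}_0-\mathbf{u}_0) + \int_0^t \mathbf{T}(t-s)\bigl[\mathbf{n}(\tilde{\mathbf{w}}+\tilde{\mathbf{z}})(s)-\mathbf{n}(\mathbf{w}+\mathbf{z})(s)\bigr]\,ds.
\end{align*}
Applying \autoref{lemma_regularity_nonlinear} to the bracket, whose Lipschitz constant is controlled by a polynomial in $R$, together with \eqref{growth_wave_semigroup}, gives
\begin{align*}
\|\mathbf{v}(t)\|_{\HH_{rad}^{s,k}} \leq C_R\bigl(\|\tilde{\mathbf{u}}_0-\mathbf{u}_0\|_{\HH_{rad}^{s,k}}+\|\tilde{\mathbf{z}}-\mathbf{z}\|_{C([0,T_1];\HH_{rad}^{s,k})}\bigr)+C_R\int_0^t\|\mathbf{v}(s)\|_{\HH_{rad}^{s,k}}\,ds,
\end{align*}
with $C_R$ depending only on $R$ and $T_1$. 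Gr\"onwall's inequality then yields
\begin{align*}
\sup_{t\in[0,T^*]}\|\mathbf{v}(t)\|_{\HH_{rad}^{s,k}}\leq C'_R\bigl(\|\tilde{\mathbf{u}}_0-\mathbf{u}_0\|_{\HH_{rad}^{s,k}}+\|\tilde{\mathbf{z}}-\mathbf{z}\|_{C([0,T_1];\HH_{rad}^{s,k})}\bigr).
\end{align*}

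I would close the argument by a standard continuation scheme: choose $\mathcal{H}_0$ and $\mathcal{Z}_0$ so small that the right-hand side above is at most $R/2$. Then on $[0,\delta]$ one has $\|\tilde{\mathbf{w}}(t)\|_{\HH_{rad}^{s,k}}\leq 3R/2$, so the short-time fixed point can be restarted at $t=\delta$ with initial datum $\tilde{\mathbf{w}}(\delta)$, and by the same choice of $\delta(R)$ the solution extends to $[0,2\delta]$. Iterating at most $\lceil T_1/\delta\rceil$ times exhausts $[0,T_1]$ while keeping $\tilde{\mathbf{w}}$ uniformly in the ball of radius $3R$, the Gr\"onwall bound is reapplied at each step, and the total accumulated constant $e^{C_R T_1}$ yields the announced Lipschitz continuity of the map $(\mathbf{u}_0,\mathbf{z})\mapsto\mathbf{w}$ into $C([0,T_1];\HH_{rad}^{s,k})$; uniqueness is inherited from the local Picard fixed point.

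The only real technical subtlety, rather than a deep obstacle, is ensuring that the step size $\delta$ does not shrink along the iteration. This is precisely what the uniform a priori bound $3R$ guarantees, since the Lipschitz constant produced by \autoref{lemma_regularity_nonlinear} is polynomial in the norms of its arguments, so a single $\delta(R)$ works at every step and the number of iterations needed to cover $[0,T_1]$ is bounded \emph{a priori}.
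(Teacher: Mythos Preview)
Your proposal is correct and follows essentially the same route as the paper: a local Picard iteration on balls with a uniform time step depending only on an a priori bound, combined with a Lipschitz dependence estimate, and then finitely many restarts to cover $[0,T_1]$. The only cosmetic difference is that you obtain the stepwise continuity estimate via Gr\"onwall, giving an accumulated constant of the form $e^{C_R T_1}$, whereas the paper reads it off directly from the contraction property of the fixed-point map and tracks the growth as $(2C)^N$ over $N=\lceil T_1/\delta\rceil$ steps; both are the same mechanism.
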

	\begin{proof}
		We divide the proof into two steps.\\
		\emph{Step 1: Preparation.} Let $M\geq 1$ be the constant appearing in \eqref{growth_wave_semigroup}. We first show the existence of a time $T^{*}=T^{*}(r,R)$ such that for any
		\begin{align*}
			\lVert \mathbf{u}_0\rVert_{\HH_{rad}^{s,k}}\leq r,\ \lVert \mathbf{z}\rVert_{C([0,T_1];\HH_{rad}^{s,k})}\leq R,
		\end{align*}
		there exists a unique solution $\mathbf{w}$ to \eqref{translated_wave} belonging to the closed ball of radius $(M+1)r$ in $C([0, T^*]; \HH_{rad}^{s,k})$. Define the map 
		\begin{align*}
			\Gamma_{\mathbf{u}_0,\mathbf{z}} [\mathbf{h}](t)=\mathbf{T}(t)\mathbf{u}_0+\int_0^t \mathbf{T}(t-s) \mathbf{n}(\mathbf{h}(s)+\mathbf{z}(s))ds.
		\end{align*}
		By the properties of the wave semigroup on $\HH_{rad}^{s,k}$, it follows immediately that $\Gamma_{\mathbf{u}_0,\mathbf{z}} [\mathbf{h}](t)\in C([0,T_1];\HH_{rad}^{s,k})$. Moreover, applying \autoref{lemma_regularity_nonlinear}, we obtain
		\begin{align*}
			\lVert\Gamma_{\mathbf{u}_0,\mathbf{z}} [\mathbf{h}](t)\rVert_{\HH_{rad}^{s,k}}&\leq  M e^{T_1} r+C_{M,s,k}\left(R+r\right)^3(1+R^{2k}+r^{2k})(e^{T_1}-1)\\ & \leq  (M+1)r
		\end{align*}
		provided that $T_1$ is sufficiently small. Therefore for sufficiently small $T^*>0$ the operator $\Gamma_{\mathbf{u}_0,\mathbf{z}}$ maps the closed ball of radius $(M+1)r$ in $C([0,T^*];\HH_{rad}^{s,k})$ into itself. Next, we have to show that $\Gamma_{\mathbf{u}_0,\mathbf{z}}$ is a contraction. Using the properties of the wave semigroup on $\HH_{rad}^{s,k}$, we have
		\begin{align*}
			\lVert\Gamma_{\mathbf{u}_0,\mathbf{z}} [\mathbf{h}_1](t)-\Gamma_{\mathbf{u}_0,\mathbf{z}} [\mathbf{h}_2](t)\rVert_{\HH_{rad}^{s,k}}& \lesssim \lVert \mathbf{h}_1-\mathbf{h}_2\rVert_{C([0,T^*];\HH_{rad}^{s,k})} \left(R+r\right)^2(1+R^{2k}+r^{2k})(e^{T^*}-1)  \\ & \leq \frac{1}{2}\lVert\mathbf{h}_1-\mathbf{h}_2\rVert_{C([0,T^*];\HH_{rad}^{s,k})}, 
		\end{align*}
		provided that $T^*$ is sufficiently small. This choice of $T^*$ may be smaller than in the previous step, but it depends only on $r$ and $R$.\\
		Similarly, we can solve \eqref{translated_wave} on any interval $[T^{*}(r,R)k,T^{*}(r,R)(k+1)]$ for $k\in \left\{0,\dots, \lfloor \frac{T_1}{T^{*}(r,R)}\rfloor \right\}$ provided that the initial condition satisfies $\lVert \mathbf{u}_{k}\rVert_{\HH_{rad}^{s,k}}\leq r. $ To keep track of the initial condition and the time interval considered, we denote by $\mathbf{w}^{k+1}(\mathbf{z},\mathbf{u}_k,\cdot)$ the unique solution of \eqref{translated_wave} in $[T^{*}(r,R)k,T^{*}(r,R)(k+1)]$ with initial condition $\mathbf{u}_{k}. $ We now show that $\mathbf{w}^{k+1}(\mathbf{z},\mathbf{u}_k,\cdot)$ depends continuously on $\mathbf{u}_k$ and $\mathbf{z}$ in $C([T^{*}(r,R)k,$ $T^{*}(r,R)(k+1)];\HH_{rad}^{s,k})$. Indeed, since $\Gamma_{\cdot,\cdot}$ is a contraction of constant $\frac{1}{2}$ in the relevant parameter range, by \autoref{lemma_regularity_nonlinear} we have
		\begin{align*}
			&\lVert \mathbf{w}^{k+1}(\mathbf{z}_1,\mathbf{u}_{k,1},t)-\mathbf{w}^{k+1}(\mathbf{z}_2,\mathbf{u}_{k,2},t)  \rVert_{\HH_{rad}^{s,k}}\\ \leq \, & \lVert \Gamma_{\mathbf{u}_{k,1},\mathbf{z}_1} [\mathbf{w}^{k+1}(\mathbf{z}_1,\mathbf{u}_{k,1},\cdot)](t)-\Gamma_{\mathbf{u}_{k,2},\mathbf{z}_1} [\mathbf{w}^{k+1}(\mathbf{z}_1,\mathbf{u}_{k,1},\cdot)](t)  \rVert_{\HH_{rad}^{s,k}}\\ & +\lVert \Gamma_{\mathbf{u}_{k,2},\mathbf{z}_1} [\mathbf{w}^{k+1}(\mathbf{z}_1,\mathbf{u}_{k,1},\cdot)](t)-\Gamma_{\mathbf{u}_{k,2},\mathbf{z}_2} [\mathbf{w}^{k+1}(\mathbf{z}_1,\mathbf{u}_{k,1},\cdot)](t)  \rVert_{\HH_{rad}^{s,k}}\\ & +\lVert \Gamma_{\mathbf{u}_{k,2},\mathbf{z}_2} [\mathbf{w}^{k+1}(\mathbf{z}_1,\mathbf{u}_{k,1},\cdot)](t)-\Gamma_{\mathbf{u}_{k,2},\mathbf{z}_2} [\mathbf{w}^{k+1}(\mathbf{z}_2,\mathbf{u}_{k,2},\cdot)](t)  \rVert_{\HH_{rad}^{s,k}}\\  \leq \, &  C\left(\lVert \mathbf{u}_{k,1}-\mathbf{u}_{k,2}\rVert_{\HH_{rad}^{s,k}}+\lVert \mathbf{z}_1-\mathbf{z}_2\rVert_{C([0,T_1];\HH_{rad}^{s,k})}\right)\\ &+\frac{1}{2}\lVert \mathbf{w}^{k+1}(\mathbf{z}_1,\mathbf{u}_{k,1},\cdot)-\mathbf{w}^{k+1}(\mathbf{z}_2,\mathbf{u}_{k,2},\cdot)  \rVert_{C([0,T_1];\HH_{rad}^{s,k})},
		\end{align*}
		for some positive constant $C$. The latter implies
		\begin{multline}\label{continuity_estimate}
			\lVert \mathbf{w}^{k+1}(\mathbf{z}_1,\mathbf{u}_{k,1},\cdot)-\mathbf{w}^{k+1}(\mathbf{z}_2,\mathbf{u}_{k,2},\cdot)  \rVert_{C([T^{*}(r,R)k,T^{*}(r,R)(k+1)];\HH_{rad}^{s,k})}\\ \leq C\left(\lVert \mathbf{u}_{k,1}-\mathbf{u}_{k,2}\rVert_{\HH_{rad}^{s,k}}+\lVert \mathbf{z}_1-\mathbf{z}_2\rVert_{C([0,T_1];\HH_{rad}^{s,k})}\right),
		\end{multline}
		which establishes the claim.\\
		\emph{Step 2: End of the proof.} 
		Let us introduce \begin{align*}
			R=1+\lVert \mathbf{z}\rVert_{C([0,T_1];\HH_{rad}^{s,k})},\ r=1+\lVert \mathbf{w}\rVert_{C([0,T_1];\HH_{rad}^{s,k})},
		\end{align*}
        with 
		$\mathbf{z}$ and $\mathbf{w}$ the ones in the statement. Let us set \begin{align*}
		    N=\left\lfloor \frac{T_1}{T^{*}(r,R)}\right\rfloor,\quad \delta_0=\frac{1}{2^{N}(C\vee 1)^N },
		\end{align*} where $C$ is the constant in \eqref{continuity_estimate}. Define $\HH_0$ and $\mathcal{Z}_0$ as the closed balls of radius $\delta_0$ centered at $\mathbf{u}_0$  and $\mathbf{z}$, respectively. Thanks to \eqref{continuity_estimate}
		\begin{align*}
			\lVert \mathbf{w}^{1}(\Tilde{\mathbf{z}},\Tilde{\mathbf{u}}_{0},\cdot)-\mathbf{w}(\mathbf{z},\mathbf{u}_{0},\cdot)  \rVert_{C([0,T^{*}(r,R)];\HH_{rad}^{s,k})} \leq \frac{1}{2^{N-1}(C\vee 1)^{N-1}}.
		\end{align*}
		In particular 
		\begin{align*}
			\lVert \mathbf{w}^{1}(\Tilde{\mathbf{z}},\Tilde{\mathbf{u}}_{0},T^{*}(r,R))\rVert_{\HH_{rad}^{s,k}}\leq \lVert\mathbf{w}\rVert_{C([0,T_1];\HH_{rad}^{s,k})}+\frac{1}{2^{N-1}(C\vee 1)^{N-1}} <r,   
		\end{align*}
		and we can construct $\mathbf{w}^{2}(\Tilde{\mathbf{z}},\mathbf{v}^{1}(\Tilde{\mathbf{z}},\Tilde{\mathbf{u}}_{0},T^{*}(r,R)),\cdot)$ extending $\mathbf{w}^{1}(\Tilde{\mathbf{z}},\Tilde{\mathbf{u}}_{0},\cdot)$. Iterating the argument, we can extend $\mathbf{w}^{1}(\Tilde{\mathbf{z}},\Tilde{\mathbf{u}}_{0},\cdot)$ to the whole $[0,T_1]$ proving the first part of the claim and the continuity of the solution map in $(\mathbf{u}_0,\mathbf{z}).$ Replacing $(\mathbf{u}_0,\mathbf{z})$ by any element in $\mathcal{H}_0,\ \mathcal{Z}_0$ the rest of the claim follows.
	\end{proof}
	As a direct consequence of \autoref{prop_aux_control} and \autoref{continuity_no_blow_up} we obtain the following proof.
	\begin{proof}[Proof of \autoref{Thm_irreducibility}]
		Let us start fixing  $\mathbf{\overline{u}}_1\in \mathscr{S}_{rad}\times \mathscr{S}_{rad} $ such that \begin{align*}
			\norm{\mathbf{\overline{u}}_1-\mathbf{u}_1}_{\HH_{rad}^{s,k}}<\frac{\eps}{2}.
		\end{align*}
		It then follows that
		\begin{align*}
			\mathbb{P}\left(\tau^*(\mathbf{u}_0)\geq t,\quad \norm{\mathbf{u}(t)-\mathbf{u}_1}_{\HH_{rad}^{s,k}}<\eps\right)\geq \mathbb{P}\left(\tau^*(\mathbf{u}_0)\geq t,\quad \norm{\mathbf{u}(t)-\mathbf{\overline{u}}_1}_{\HH_{rad}^{s,k}}<\frac{\eps}{2}\right).
		\end{align*}
		By \autoref{prop_aux_control} with $T_1=t$, there exists $\mathbf{z}\in \mathcal{E}^{s,k}_{0,T_1}$ such that \begin{align*}
		    \mathbf{w}(t)+\mathbf{z}(t)=\mathbf{\overline{u}}_1.
		\end{align*} Furthermore, by \autoref{continuity_no_blow_up}, there exists a neighborhood $\mathcal{Z}_0$ of $\mathbf{z}$ in $\mathcal{E}^{s,k}_{0,T_1}$,  such that for every $\mathbf{\overline{z}}\in \mathcal{Z}_0 $
        \begin{align*}
        \norm{\mathbf{\overline{w}}(t)+\mathbf{\overline{z}}(t)-\mathbf{\overline{u}}_1}<\frac{\eps}{2}.
        \end{align*}
        Consequently, we have
		\begin{align*}
			\mathbb{P}\left(\tau^*(\mathbf{u}_0)\geq t,\quad\norm{\mathbf{u}(t)-\mathbf{\overline{u}}_1}_{\HH_{rad}^{s,k}}<\frac{\eps}{2}\right)\geq \mathbb{P}(\mathbf{z}\in \mathcal{Z}_0).    
		\end{align*}
		The latter probability is strictly positive due to the non-degeneracy of the noise $\mathcal{W}$, its regularity and \autoref{lemma_support_z}.
	\end{proof}
	\section{Proof of \autoref{main_thm}}
    
	\noindent As announced in \autoref{sec_strategy}, the proof follows by combining \autoref{Thm_irreducibility} and \autoref{stability_lemma}. For a given $\mathcal{T}>0$, let us fix \begin{align*}
	    T=\frac{\mathcal{T}}{3}
	\end{align*} and subsequently $ \eps>0$ small enough such that  \autoref{stability_lemma} applies with this choice of $T.$ Define \begin{align*}
	    \Omega_0=\left\{\omega\in \Omega:\ \tau^{*}(\boldsymbol{\mathcal{U}}_0)>\frac{\mathcal{T}}{2},\quad \norm{\mathbf{u}\left(\frac{\mathcal{T}}{2},\cdot\right)-\frac{1}{T}\mathbf{\Phi}\left(\frac{\cdot}{T}\right)}_{\HH_{rad}^{s,k}}<\frac{\eps}{1\vee C_{d,s,k}}\right\},
	\end{align*}where $C_{d,s,k}$ denotes the maximum of the hidden constants appearing in \cite[Proposition A.5, Remark A.6]{glogic2022stable} applied for $s,s-1,k,k-1$. Then, on $\Omega_0$ we also have
	\begin{align*}
		\norm{\boldsymbol{\mathcal{U}}\left(\frac{\mathcal{T}}{2},\cdot\right)-\boldsymbol{\mathcal{U}}_{\frac{\mathcal{T}}{3}}(0,\cdot)}_{\HH^{s,k}}<\eps.
	\end{align*}
	By \autoref{Thm_irreducibility}, $\mathbb{P}(\Omega_0)>0$, and clearly $\Omega_0\in \mathcal{F}_{\mathcal{T}/2}$. Let us define \begin{align*}
		\mathbf{\Tilde{u}}_0=\begin{cases}
			\mathbf{u}(\frac{\mathcal{T}}{2})\quad &\text{if }\omega\in \Omega_0\\
			0\quad &\text{otherwise.}
		\end{cases}
	\end{align*}
	By \autoref{stability_lemma}, the solution of equation \eqref{eq_intro_noise} with initial condition $\boldsymbol{\tilde{\mathcal{U}}}_0(X)=\mathbf{\Tilde{u}}_0(\lvert X\rvert) X$ and Brownian motion $\Tilde{\mathcal{W}}_t=\mathcal{W}_{\frac{\mathcal{T}}{2}+t}-\mathcal{W}_{\frac{\mathcal{T}}{2}}$, denoted by $\boldsymbol{\mathcal{\tilde{U}}},$ blows up in a self-similar way before $\frac{3}{2}T=\frac{\mathcal{T}}{2} $ with positive probability.
	Due to pathwise uniqueness of solutions for \eqref{eq_intro_noise} guaranteed by \autoref{corollary_well_posed}, it follows that $\boldsymbol{\mathcal{U}}(t+\frac{\mathcal{T}}{2})$ coincides with $\boldsymbol{\Tilde{\mathcal{U}}}(t)$ on $\Omega_0.$ Consequently, $\boldsymbol{\mathcal{U}}(t)$ also blows up in a self-similar way with positive probability before $\frac{\mathcal{T}}{2}+\frac{3}{2}T=\mathcal{T}$. Finally, defining \begin{align*}
	    \Upsilon\left(\frac{\mathcal{T}}{2}+t,x\right)=\Psi\left(\log\left(\frac{\tau^{*}(\boldsymbol{\mathcal{U}}_0)-\frac{\mathcal{T}}{2}}{\tau^{*}(\boldsymbol{\mathcal{U}}_0)-\frac{\mathcal{T}}{2}-t}\right),x\right),
	\end{align*} the proof is complete.

	\bibliography{demo.bib}

\begin{thebibliography}{10}

\bibitem{agresti2024global}
Antonio Agresti.
\newblock Global smooth solutions by transport noise of 3{D} {N}avier-{S}tokes equations with small hyperviscosity.
\newblock {\em arXiv preprint arXiv:2406.09267}, 2024.

\bibitem{agresti2025nonlinear}
Antonio Agresti and Mark Veraar.
\newblock Nonlinear {SPDE}s and {M}aximal {R}egularity: {A}n {E}xtended {S}urvey.
\newblock {\em Nonlinear Differential Equations and Applications NoDEA}, 32(6):123, 2025.

\bibitem{albeverio}
Sergio Albeverio, Zbigniew Haba, and Francesco Russo.
\newblock Trivial solutions for a non-linear two-space-dimensional wave equation perturbed by space-time white noise.
\newblock {\em Stochastics and Stochastics Reports}, 56(1-2):127--160, 1996.

\bibitem{beck2019stochastic}
Lisa Beck, Franco Flandoli, Massimiliano Gubinelli, and Mario Maurelli.
\newblock Stochastic {ODE}s and stochastic linear {PDE}s with critical drift: regularity, duality and uniqueness.
\newblock {\em Electronic Journal of Probability}, 24:Paper No. 136, 72, 2019.

\bibitem{bertacco2023weak}
Federico Bertacco, Carlo Orrieri, and Luca Scarpa.
\newblock Weak uniqueness by noise for singular stochastic {PDE}s.
\newblock {\em Transactions of the American Mathematical Society}, 2025.

\bibitem{BieDonSch21}
Pawe{\l} Biernat, Roland Donninger, and Birgit Sch{\"o}rkhuber.
\newblock Hyperboloidal similarity coordinates and a globally stable blowup profile for supercritical wave maps.
\newblock {\em International Mathematics Research Notices}, 2021(21):16530--16591, 2021.

\bibitem{bizon2015generic}
Piotr Bizo{\'n} and Pawe{\l} Biernat.
\newblock Generic self-similar blowup for equivariant wave maps and {Y}ang--{M}ills fields in higher dimensions.
\newblock {\em Communications in Mathematical Physics}, 338:1443--1450, 2015.

\bibitem{BizChmTab00}
Piotr Bizo{\'n}, Tadeusz Chmaj, and Zbis{\l}aw Tabor.
\newblock Dispersion and collapse of wave maps.
\newblock {\em Nonlinearity}, 13(4):1411--1423, 2000.

\bibitem{brzezniak2007strong}
Zdzis{\l}aw Brze{\'z}niak and Martin Ondrej{\'a}t.
\newblock Strong solutions to stochastic wave equations with values in {R}iemannian manifolds.
\newblock {\em Journal of Functional Analysis}, 253(2):449--481, 2007.

\bibitem{brzezniak2010stochastic}
Zdzis{\l}aw Brzezniak and Martin Ondrej{\'a}t.
\newblock Stochastic wave equations with values in {R}iemannian manifolds.
\newblock {\em Stochastic Partial Differential Equations and Applications}, pages 65--97, 2010.

\bibitem{carmona1988random}
Ren{\'e} Carmona and David Nualart.
\newblock Random non-linear wave equations: smoothness of the solutions.
\newblock {\em Probability Theory and Related Fields}, 79(4):469--508, 1988.

\bibitem{carmona1988random2}
Ren{\'e} Carmona and David Nualart.
\newblock Random nonlinear wave equations: propagation of singularities.
\newblock {\em The Annals of Probability}, 16(2):730--751, 1988.

\bibitem{CazShaTah98}
Thierry Cazenave, Jalal Shatah, and A.~Shadi Tahvildar-Zadeh.
\newblock Harmonic maps of the hyperbolic space and development of singularities in wave maps and {Y}ang-{M}ills fields.
\newblock {\em Annales de l'Institut Henri Poincar\'e. Physique Th\'eorique}, 68(3):315--349, 1998.

\bibitem{ChaDonGlo17}
Athanasios Chatzikaleas, Roland Donninger, and Irfan Glogi\'c.
\newblock On blowup of co-rotational wave maps in odd space dimensions.
\newblock {\em Journal of Differential Equations}, 263(8):5090--5119, 2017.

\bibitem{chow2002stochastic}
Pao-Liu Chow.
\newblock Stochastic wave equations with polynomial nonlinearity.
\newblock {\em The Annals of Applied Probability}, 12(1):361--381, 2002.

\bibitem{coghi2023existence}
Michele Coghi and Mario Maurelli.
\newblock Existence and uniqueness by {K}raichnan noise for 2{D} {E}uler equations with unbounded vorticity.
\newblock {\em arXiv preprint arXiv:2308.03216}, 2023.

\bibitem{CosDonGlo17}
Ovidiu Costin, Roland Donninger, and Irfan Glogi{\'c}.
\newblock Mode stability of self-similar wave maps in higher dimensions.
\newblock {\em Communications in Mathematical Physics}, 351(3):959--972, 2017.

\bibitem{CosDonXia16}
Ovidiu Costin, Roland Donninger, and Xiaoyue Xia.
\newblock A proof for the mode stability of a self-similar wave map.
\newblock {\em Nonlinearity}, 29(8):2451, 2016.

\bibitem{crippa2025zero}
Gianluca Crippa, Eliseo Luongo, and Umberto Pappalettera.
\newblock Zero-noise selection and large deviations in $ {L}^{\infty}_t{L}^p_x$ for the stochastic transport equation beyond {D}i{P}erna-{L}ions.
\newblock {\em arXiv preprint arXiv:2506.06947}, 2025.

\bibitem{da2002two}
Giuseppe Da~Prato and Arnaud Debussche.
\newblock Two-dimensional {N}avier--{S}tokes equations driven by a space--time white noise.
\newblock {\em Journal of Functional Analysis}, 196(1):180--210, 2002.

\bibitem{DP_regular_2}
Giuseppe Da~Prato, Franco Flandoli, Michael R\"ockner, and A.~Yu. Veretennikov.
\newblock Strong uniqueness for {SDE}s in {H}ilbert spaces with nonregular drift.
\newblock {\em The Annals of Probability}, 44(3):1985--2023, 2016.

\bibitem{DP_regular_1}
Giuseppe Da~Prato, Frando Flandoli, Enrico Priola, and Michael R\"ockner.
\newblock Strong uniqueness for stochastic evolution equations in {H}ilbert spaces perturbed by a bounded measurable drift.
\newblock {\em The Annals of Probability}, 41(5):3306--3344, 2013.

\bibitem{da2014stochastic}
Giuseppe Da~Prato and Jerzy Zabczyk.
\newblock {\em Stochastic equations in infinite dimensions}, volume 152.
\newblock Cambridge university press, 2014.

\bibitem{dalang1998stochastic}
Robert~C Dalang and Nicholas~E Frangos.
\newblock The stochastic wave equation in two spatial dimensions.
\newblock {\em Annals of Probability}, pages 187--212, 1998.

\bibitem{de2002effect}
Anne de~Bouard and Arnaud Debussche.
\newblock On the effect of a noise on the solutions of the focusing supercritical nonlinear {S}chr{\"o}dinger equation.
\newblock {\em Probability theory and related fields}, 123(1):76--96, 2002.

\bibitem{debussche2005blow}
Anne de~Bouard and Arnaud Debussche.
\newblock Blow-up for the stochastic nonlinear {S}chr\"odinger equation with multiplicative noise.
\newblock {\em The Annals of Probability}, 33(3):1078--1110, 2005.

\bibitem{Don11}
Roland Donninger.
\newblock On stable self-similar blowup for equivariant wave maps.
\newblock {\em Communications on Pure and Applied Mathematics}, 64(8):1095--1147, 2011.

\bibitem{DonAic09}
Roland Donninger and Peter~C Aichelburg.
\newblock Spectral properties and linear stability of self-similar wave maps.
\newblock {\em Journal of Hyperbolic Differential Equations}, 6(02):359--370, 2009.

\bibitem{DonGlo19}
Roland Donninger and Irfan Glogi\'{c}.
\newblock On the existence and stability of blowup for wave maps into a negatively curved target.
\newblock {\em Analysis \& PDE}, 12(2):389--416, 2019.

\bibitem{DonOst24}
Roland Donninger and Matthias Ostermann.
\newblock On stable self-similar blowup for corotational wave maps and equivariant {Y}ang–{M}ills connections.
\newblock {\em arXiv e-prints: 2409.14733}, September 2024.

\bibitem{DonSchAic12}
Roland Donninger, Birgit Sch{\"o}rkhuber, and Peter~C. Aichelburg.
\newblock On stable self-similar blow up for equivariant wave maps: the linearized problem.
\newblock {\em Ann. Henri Poincar\'e}, 13(1):103--144, 2012.

\bibitem{flandoli1997irreducibility}
Franco Flandoli.
\newblock Irreducibility of the 3-{D} stochastic {N}avier--{S}tokes equation.
\newblock {\em Journal of functional analysis}, 149(1):160--177, 1997.

\bibitem{flandoli2011random}
Franco Flandoli.
\newblock {\em Random {P}erturbation of {PDE}s and {F}luid {D}ynamic {M}odels: {\'E}cole d’{\'e}t{\'e} de {P}robabilit{\'e}s de {S}aint-{F}lour {XL}--2010}, volume 2015.
\newblock Springer Science \& Business Media, 2011.

\bibitem{flandoli2010well}
Franco Flandoli, Massimiliano Gubinelli, and Enrico Priola.
\newblock Well-posedness of the transport equation by stochastic perturbation.
\newblock {\em Inventiones mathematicae}, 180(1):1--53, 2010.

\bibitem{flandoli2021high}
Franco Flandoli and Dejun Luo.
\newblock High mode transport noise improves vorticity blow-up control in 3{D} {N}avier--{S}tokes equations.
\newblock {\em Probability Theory and Related Fields}, 180:309--363, 2021.

\bibitem{foondun2024instantaneous}
Mohammud Foondun, Davar Khoshnevisan, and Eulalia Nualart.
\newblock Instantaneous everywhere-blowup of parabolic {SPDE}s.
\newblock {\em Probability Theory and Related Fields}, 190(1):601--624, 2024.

\bibitem{foondun2019some}
Mohammud Foondun, Wei Liu, and Erkan Nane.
\newblock Some non-existence results for a class of stochastic partial differential equations.
\newblock {\em Journal of Differential Equations}, 266(5):2575--2596, 2019.

\bibitem{foondun2015non}
Mohammud Foondun and Rana Parshad.
\newblock On non-existence of global solutions to a class of stochastic heat equations.
\newblock {\em Proceedings of the American Mathematical Society}, 143(9):4085--4094, 2015.

\bibitem{glogic2022stable}
Irfan Glogi{\'c}.
\newblock Stable blowup for the supercritical hyperbolic {Y}ang-{M}ills equations.
\newblock {\em Advances in Mathematics}, 408:108633, 2022.

\bibitem{glogic2025globally}
Irfan Glogi{\'c}.
\newblock Globally stable blowup profile for supercritical wave maps in all dimensions.
\newblock {\em Calculus of Variations and Partial Differential Equations}, 64(2):46, 2025.

\bibitem{gubinelli2018renormalization}
Massimiliano Gubinelli, Herbert Koch, and Tadahiro Oh.
\newblock Renormalization of the two-dimensional stochastic nonlinear wave equations.
\newblock {\em Transactions of the American Mathematical Society}, 370(10):7335--7359, 2018.

\bibitem{gubinelli2022global}
Massimiliano Gubinelli, Herbert Koch, Tadahiro Oh, and Leonardo Tolomeo.
\newblock Global dynamics for the two-dimensional stochastic nonlinear wave equations.
\newblock {\em International Mathematics Research Notices}, 2022(21):16954--16999, 2022.

\bibitem{herr2023three}
Sebastian Herr, Michael R{\"o}ckner, Martin Spitz, and Deng Zhang.
\newblock The three-dimensional stochastic {Z}akharov system.
\newblock {\em The Annals of Probability}, 53(3):848--905, 2025.

\bibitem{Hocq19}
Antoine Hocquet.
\newblock Finite-time singularity of the stochastic harmonic map flow.
\newblock {\em Annales de l'Institut Henri Poincar\'e{} Probabilit\'es et Statistiques}, 55(2):1011--1041, 2019.

\bibitem{KriMia20}
Joachim Krieger and Shuang Miao.
\newblock On the stability of blowup solutions for the critical corotational wave-map problem.
\newblock {\em Duke Mathematical Journal}, 169(3):435--532, 2020.

\bibitem{KriMiaSch20}
Joachim {Krieger}, Shuang {Miao}, and Wilhelm {Schlag}.
\newblock {A stability theory beyond the co-rotational setting for critical Wave Maps blow up}.
\newblock {\em arXiv e-prints}, page arXiv:2009.08843, September 2020.

\bibitem{KriSchTat08}
Joachim Krieger, Wilhel Schlag, and Daniel Tataru.
\newblock Renormalization and blow up for charge one equivariant critical wave maps.
\newblock {\em Inventiones Mathematicae}, 171(3):543--615, 2008.

\bibitem{KriSch12}
Joachim Krieger and Wilhelm Schlag.
\newblock {\em Concentration compactness for critical wave maps}.
\newblock EMS Monographs in Mathematics. European Mathematical Society (EMS), Z\"urich, 2012.

\bibitem{mueller2000critical}
Carl Mueller.
\newblock The critical parameter for the heat equation with a noise term to blow up in finite time.
\newblock {\em The Annals of Probability}, 28(4):1735--1746, 2000.

\bibitem{mueller1993blowup}
Carl Mueller and Richard Sowers.
\newblock Blowup for the heat equation with a noise term.
\newblock {\em Probability theory and related fields}, 97(3):287--320, 1993.

\bibitem{pazy2012semigroups}
Amnon Pazy.
\newblock {\em Semigroups of linear operators and applications to partial differential equations}, volume~44.
\newblock Springer Science \& Business Media, 2012.

\bibitem{peszat2000nonlinear}
Szymon Peszat and Jerzy Zabczyk.
\newblock Nonlinear stochastic wave and heat equations.
\newblock {\em Probability Theory and Related Fields}, 116:421--443, 2000.

\bibitem{Priola}
Enrico Priola.
\newblock An optimal regularity result for {K}olmogorov equations and weak uniqueness for some critical {SPDE}s.
\newblock {\em The Annals of Probability}, 49(3):1310--1346, 2021.

\bibitem{RapRod12}
Pierre Rapha{\"e}l and Igor Rodnianski.
\newblock Stable blow up dynamics for the critical co-rotational wave maps and equivariant {Y}ang-{M}ills problems.
\newblock {\em Publications Math\'ematiques. Institut de Hautes \'Etudes Scientifiques}, pages 1--122, 2012.

\bibitem{RodSte10}
Igor Rodnianski and Jacob Sterbenz.
\newblock On the formation of singularities in the critical {${\rm O}(3)$} {$\sigma$}-model.
\newblock {\em Annals of Mathematics. Second Series}, 172(1):187--242, 2010.

\bibitem{Sha88}
Jalal Shatah.
\newblock Weak solutions and development of singularities of the {${\rm SU}(2)$} {$\sigma$}-model.
\newblock {\em Communications on Pure and Applied Mathematics}, 41(4):459--469, 1988.

\bibitem{ShaTah94}
Jalal Shatah and A.~Shadi Tahvildar-Zadeh.
\newblock On the {C}auchy problem for equivariant wave maps.
\newblock {\em Communications on Pure and Applied Mathematics}, 47(5):719--754, 1994.

\bibitem{SteTat10}
Jacob Sterbenz and Daniel Tataru.
\newblock Energy dispersed large data wave maps in {$2+1$} dimensions.
\newblock {\em Communications in Mathematical Physics}, 298(1):139--230, 2010.

\bibitem{SteTat10a}
Jacob Sterbenz and Daniel Tataru.
\newblock Regularity of wave-maps in dimension {$2+1$}.
\newblock {\em Comm. Math. Phys.}, 298(1):231--264, 2010.

\bibitem{Tao09}
Terence Tao.
\newblock Global regularity of wave maps {III-VII}.
\newblock {\em arXiv preprints}, 2008-2009.

\bibitem{tolomeo}
Leonardo Tolomeo.
\newblock Global well posedness of the two-dimensional stochastic nonlinear wave equation on an unbounded domain.
\newblock {\em The Annals of Probability}, 49(3):1402--1426, 2021.

\bibitem{turok1990global}
Neil Turok and David Spergel.
\newblock Global texture and the microwave background.
\newblock {\em Physical Review Letters}, 64(23):2736, 1990.

\bibitem{veretennikov1981strong}
Alexander~Ju Veretennikov.
\newblock {O}n strong solutions and explicit formulas forsolutions of stochastic integral equations.
\newblock {\em Mathematics of the USSR-Sbornik}, 39(3):387, 1981.

\bibitem{zvonkin1974transformation}
Alexander~K Zvonkin.
\newblock {A} transformation of the phase space of a diffusion process that removes the drift.
\newblock {\em Mathematics of the USSR-Sbornik}, 22(1):129, 1974.

\end{thebibliography}
	\bibliographystyle{plain}
\end{document}